\date{}
\def\lb{\lambda}
\def\var{\varepsilon}
\def\pil{\left<}
\def\pir{\right>}
\def\dis{\displaystyle}
\def\nd{\noindent}
\def\thend{\rule{3mm}{3mm}}
\def\Re{\mathbb{R}}
\newtheorem{theorem}{Theorem}[section]
\newtheorem{prop}{Proposition}[section]
\newtheorem{lem}{Lemma}[section]
\newtheorem{rmk}{Remark}[section]
\newcommand{\theend}{\hfill $\Box$}
\newcommand{\w}{W_0^{1,\Phi}(\Omega)}
\renewcommand{\theequation}{\arabic{section}.\arabic{equation}}
\newcommand{\cqd}{\hspace{10pt}\fbox{}}
\newcommand{\cqdf}{\hspace{10pt}\rule{5pt}{5pt}}
\newcommand{\Int}{\displaystyle\int_{\Omega}}
\newcommand{\eps}{\epsilon}
\newcommand{\ds}{\displaystyle}
\newcommand{\Fr}{\displaystyle\frac}
\newcommand{\Proof}{{\hspace{-.8cm}\bf Proof: }}
\newcommand{\Z}{\tilde{z}}
\newcommand{\U}{\tilde{u}}
\newcommand{\V}{\tilde{v}}
\newcommand{\Q}{\tilde{Q}}
\newcommand{\hu}{\hat{u}}
\newcommand{\hv}{\hat{v}}
\newcommand{\E}{\bar{\eps}}
\newcommand{\2}{2^*}
\newcommand{\on}{o_n(1)}
\newcommand{\el}{\ell^*}
\newcommand{\s}{\ds\sum_{i=1}^{r}}
\newcommand{\Rn}{\mathbb{R}^N}
\newcommand{\R}{\mathbb{R}}
\newcommand{\M}{\cal{M}}
\newcommand{\N}{\mathcal{N}_{\lambda,\mu}}
\begin{document}
\title[Ground and bound state solutions for quasilinear elliptic systems including singular nonlinearities] {Ground and bound state solutions for quasilinear elliptic systems including singular nonlinearities and indefinite potentials}
\vspace{1cm}

\author{M. L. M. Carvalho}
\address{M. L. M. Carvalho \newline Universidade Federal de Goias, IME, Goi\^ania-GO, Brazil }
\email{\tt marcos$\_$leandro$\_$carvalho@ufg.br}

\author{Edcarlos D. da Silva}
\address{Edcarlos D da Silva \newline  Universidade Federal de Goias, IME, Goi\^ania-GO, Brazil}
\email{\tt edcarlos@ufg.br}

\author{C. A. Santos}
\address{C. A. Santos \newline  Universidade de  Brasilia, Brasilia-DF, Brazil}
\email{\tt c.a.p.santos@mat.unb.br}

\author{C. Goulart}
\address{C. Goulart \newline Universidade Federal de Jata\'\i, Jata\'\i-GO, Brazil }
\email{\tt claudiney@ufg.br}

\subjclass{35J20, 35J25, 35J60, 35J92, 58E05} \keywords{Quasilinear elliptic systems, Nonhomogeneous operators, Nehari method, Indefinite nonlinearities, Nonsingular nonlinearities, Singular nonlinearities}
\thanks{The authors was partially supported by Fapeg/CNpq grants 03/2015-PPP}
\begin{abstract}
It is established existence of bound and ground state solutions for quasilinear elliptic systems driven by $(\Phi_{1}, \Phi_{2})$-Laplacian operator. The main feature here is to consider quasilinear elliptic systems involving both  nonsingular nonlinearities combined with indefinite potentials and singular cases perturbed by  
 superlinear and subcritical couple terms. These prevent us to use arguments based on Ambrosetti-Rabinowitz condition and variational methods for differentiable functionals. By exploring the Nehari method and doing a fine analysis on the fibering map associated, we get estimates that allow us unify the arguments to show multiplicity of semi-trivial solutions in both cases. 
\end{abstract}

\maketitle

\section{Introduction}

In this work we consider  the class of quasilinear elliptic system  driven by the $(\Phi_1,\Phi_2)$-Laplacian operator in the  form
\begin{equation}\label{eq1}
\left\{\begin{array}{rcl}
-\Delta_{\Phi_1} u  & = & \lambda a(x)|u|^{q-2}u  + \frac{\alpha}{\alpha+\beta}b(x)|u|^{\alpha-2}u|v|^{\beta} \, \mbox{ in }\,    \Omega, \\[1,3ex]
-\Delta_{\Phi_2} v  & = & \mu c(x)|v|^{q-2}v  + \frac{\beta}{\alpha+\beta}b(x)|u|^{\alpha}|v|^{\beta-2}v \, \mbox{ in }\,   \Omega, \\[1,3ex]
u&=&v \,\, = \,\,0 \,\,\,\, \mbox{on}\,\,\,\, \partial\Omega ,
\end{array} \right.
\end{equation}
where $\Omega \subset \mathbb{R}^{N}$ is a smooth bounded  domain with $N \geq 2$ and $
\Delta_{\Phi_i} u =  \mbox{div} ( \phi_i(|\nabla u|) \nabla u)
$
with 
\begin{equation*}
\Phi_i(t) := \int_{0}^{\vert t \vert} s \phi_i(s) ds, t \in \mathbb{R}, i = 1,2
\end{equation*}
for some $C^{2}$-function $\phi_i: (0,\infty)\rightarrow (0,\infty)$ whose assumptions will be established later.  We also consider  $a,b,c: \Omega \rightarrow \mathbb{R}$ are continuous potentials in $L^{\infty}(\Omega)$; $\alpha,\beta > 1$ and $ 0 < q < \ell_{i} \leq m_{i} < \min\{\ell_{i}^{*}\}$, where $\ell_{i}, m_{i} \in (1, N)$ and $\ell_{i}^{*} = \ell_{i} N /(N - \ell_i)$, for $ i = 1,2$, that lead us to a class of singular or nonsingular systems with superlinear-subcritical couple terms. Our principal goal is showing existence  of ground state solution (solution that has minimum energy among any nontrivial solutions) and bound state solution (solution with  finite energy). 

Non-homogeneous differential operators  have been widely considered in the literature  in the context of scalar problems (see  \cite{rad2,rad3,appl1,Hainfan2014,Fuk_1,Fuk_2,Garcia,rad1} for further details); however, there are few works dealing with systems of the type (\ref{eq1}). In  \cite{HM} it was considered  an eigenvalue problem for the $(\Phi_1, \Phi_2)$-Laplacian, while \cite{QiLin} addressed to  $(p,q)-$Laplacian framework. Unlike to this case, there are several works in the literature dealing with homogeneous operator for problems of the type (\ref{eq1}) for the nonsingular case. Even to this case,  there are few works in the context of singular ones.

As we have already said, for the nonsingular cases there are a variety of works treating Problem (\ref{eq1}) with different kinds of potentials and nonlinearities. In  \cite{Hsu} and  \cite{QiLin} were considered definite potentials, while in \cite{Hainfan2014} and \cite{wu-sytem} were studied in the setting of indefinite potentials. This case in more challenging due the lake of  Ambrosetti-Rabinowitz condition. For more details envolving these feature on potentials under    subcritical, critical and supercritical behavior of the couple term,
 see  \cite{Alana,Alana2,Hainfan2014,Garcia,Hsu2012,HM,depaiva,Ramos}  and references therein.

About singular elliptic systems, the are few results dealing problem of the type (\ref{eq1}). By using non-variational methods, the works \cite{alves,JT,elmanouni,Carvalho2,hai} and references therein showed existence of solutions, but they did not get multiplicity results. The principal difficulty in approaching problems like  (\ref{eq1}) with variational methods meets in the fact the energy functional may be of infinite energy in the whole space. Depending on how strong the singularity is, we can have finite energy for the functional either in some parts of the base space or in the whole one, but it will never  be Gateaux differentiable in the whole space. By constraining our energy functional on a set of the type Nehari and exploring ideas found in  \cite{yijing, yijing2011,yijing2008}, we are able to show that the energy functional is Gateaux differentiable at the minimum points of this functional constrained some disjoint subsets of this Nehari set.

Besides the mathematical interest about problems with $(\Phi_1, \Phi_2)$-Laplacian operator, see for instance \cite{ Carvalho2,HM}, there is a wide amount of real-world problems   modeled by operators of the type $(\Phi_1, \Phi_2)$-Laplacian, for instance, in the fields of non-Newtonian fluids, image processing, plasma physics, among others, see \cite{Fuk_1,Fuk_2}. Just to highlight some of them, we mention some situations in what $\Phi = \Phi_{1} = \Phi_{2}$ was considered:
\begin{itemize}
	\item[(i)] nonlinear elasticity: $\Phi(t)=(1+t^{2})^{\gamma}-1$, $1<\gamma<N/(N-2)$;
	\item[(ii)] plasticity: $\Phi(t)=t^{\alpha}(\log(1+t))^{\beta}$, $\alpha\geq1$, $\beta>0$;
	\item[(iii)] non-Newtonian fluid: $\Phi(t)=\frac{1}{p}|t|^{p}$, for $p>1$;
	\item[(iv)] plasma physics: $\Phi(t)=\frac{1}{p}|t|^{p}+\frac{1}{q}|t|^{q}$, where $1<p<q<N$ with $q\in(p,p^{*})$;
	\item[(v)] generalized Newtonian fluids: $\Phi(t)=\int_{0}^{t}s^{1-\alpha}[\sinh^{-1}(s)]^{\beta}\;\mathrm{d}s$, $0\leq\alpha\leq 1$, $\beta>0$.
\end{itemize}

To introduce the setting spaces to approach Problem $(\ref{eq1})$ by Variational methods, let us  consider 
$C^{2}$-functions $\phi_i: (0,\infty)\rightarrow (0,\infty)$ satisfying:
\begin{itemize}
  \item[$(\phi_1)$] $\displaystyle \lim_{t \rightarrow 0} t \phi_i(t)= 0, \displaystyle \lim_{t \rightarrow \infty} t \phi_i(t)= \infty$,
  \item[$(\phi_2)$] $t\mapsto t\phi_i(t)$ is strictly increasing,
  \item[($\phi_3$)] $ -1<\ell_i-2:=\ds\inf_{t>0}\Fr{(t\phi_i(t))^\prime t}{(t\phi_i(t))^\prime}\leq \ds\sup_{t>0}\Fr{(t\phi_i(t))^{\prime\prime}t}{(t\phi_i(t))^\prime}=:m_i-2< N-2$
\end{itemize}
for $ i = 1,2$ and note that all the functions listed in the examples $(i)$-$(v)$ satisfy the assumptions $(\phi_1)-(\phi_3)$, where $\phi$ is such that $\Phi'(t)=\Phi'_i(t) =\phi_i(t)t=\phi(t)t,~t\geq 0$. 

Under hypotheses $(\phi_1)-(\phi_3)$ and due to the nature of the $
\Delta_{\Phi_i}  $-operator, it is natural to work on reflexives and Banach spaces called Orlicz and Orlicz-Sobolev spaces, which will be denoted by $L_{\Phi_i}(\Omega)$ and $W^{1, \Phi_i}_{0}(\Omega)$, respectively. These hypotheses may introduce Orlicz spaces that are not equivalent to any Lebesgue spaces. One well known example is the N-function
$$\Phi_i(t)=|t|^{\ell_i}ln(|t|+1),~t\in \mathbb{R}$$
that satisfies the assumptions $(\phi_1)-(\phi_3)$  with $m_i=\ell_i +1$ and $\ell_i>1, i = 1,2$, but $L_{\Phi_i}(\Omega)$ is not equivalent to any Lebesgue space $L^s(\Omega)$ for any $s\geq1$. In particular, the approach of  the quasilinear elliptic System (\ref{eq1}) on these spaces allows us to deal with  operators of the type $-\Delta_{p_i}u -\Delta_{q_i}u$ with $p_i,q_i>1$, as well. See the Appendix for  additional details on Orlicz and Orlicz-Sobolev framework. 

Another important consequence of $(\phi_1)-(\phi_3)$ is the inequality
\begin{equation}
\label{22}
\ell_i\leq\Fr{\phi_i(t)t^2}{\Phi_i(t)}\leq m_i~\mbox{and }
\ell_i-2\leq \frac{\phi^\prime(t)t}{\phi(t)}\leq m_i-2,
\end{equation}
which, together with the assumption
\begin{flushleft}
$(H)~~{0}<q<\Fr{(\alpha+\beta-1)\min\{\ell_i\}-\max\{m_i(m_i-1)\}}{\alpha+\beta-\min\{\ell_i\}} \leq \ell_i\leq m_i<\alpha+\beta< \min\{\ell_i^*\}, i = 1,2,$
\end{flushleft}
lead us to infer that
\begin{equation*}
0<q<\Fr{\ell_i(\alpha+\beta-m_i)}{\alpha+\beta-\ell_i}\leq \ell_i\leq m_i<\alpha+\beta< \min\{\ell_i^*\}~\mbox{for } i = 1,2,
\end{equation*}
holds true.

To define an energy functional, let us denote by  $W:= W^{1, \Phi_1}_{0}(\Omega)\times W^{1, \Phi_2}_{0}(\Omega)$ that is a reflexive Banach space endowed with the  norm
$$||z|| =||u||_{W^{1,\Phi_1}_0}+||v||_{W^{1,\Phi_2}_0}:=||u||+||v||,$$
where $z=(u,v) \in W$.  Under the above assumptions, it is standard to show that the energy function $J:W 
\to \mathbb{R}$ associated to the system (\ref{eq1}), defined by  
\begin{equation*}
J(z)=\Int A(z)-\Fr{1}{q}  K(z)-\Fr{1}{\alpha+\beta} Q(z),~~z=(u,v)\in W,
\end{equation*}
is well-defined, but it may not be Gateaux differentiable in the whole space. When $q>1$, it is a $C^1$-functional  and its derivative is given by 
\begin{equation*}\begin{array}{rcl}
 \langle J^\prime(z), \varphi \rangle = \Int  A^\prime(z)\varphi
 - \dfrac{1}{q}  K^\prime(z)\varphi
 - \Fr{1}{\alpha+\beta}Q^\prime(z)\varphi\,\, \mbox{for any} \,\, z, \varphi \in W,
 \end{array}
\end{equation*}
where 
$$A(z)=\Phi_1(|\nabla u|)+\Phi_2(|\nabla v|),~~K(z)=\lambda a(x)|u|^q+\mu c(x)|v|^q,~~Q(z)= b(x)|u|^\alpha|v|^\beta.$$
and their derivatives are given by
$$A^\prime(z)\varphi=\phi_1(|\nabla u|)\nabla u\nabla \varphi_1+\phi_2(|\nabla v|)\nabla v\nabla \varphi_2,~~K^\prime(z)\varphi=\lambda q a(x)|u|^{q-2}u\varphi_1+\mu q c(x)|v|^{q-2}v\varphi_2,$$
and
$$ Q^\prime(z)\varphi= b(x)( \alpha|u|^{\alpha-2}u|v|^\beta \varphi_1+\beta|v|^{\beta-2}v|u|^\alpha \varphi_2).$$

In both cases ($0<q<1$ and $q>1$), let us show that finding weak solutions to System \eqref{eq1} is equivalent to get critical points to the functional $J$, that is, a weak solution $z=(u,v) \in W$ to  the quasilinear elliptic system \eqref{eq1} means that
\begin{equation*}\begin{array}{rcl}
 \Int  A^\prime(z)\varphi
= \Int\dfrac{1}{q}  K^\prime(z)\varphi
+\Fr{1}{\alpha+\beta}  Q^\prime(z)\varphi \mbox{ for all } \varphi=(\varphi_1,\varphi_2)\ \in W.\end{array}
\end{equation*}
This implies that $0=(0,0)$ is a solution of the System \eqref{eq1}, called trivial, while solutions of the type $z_1=(u,0)$ or $z_2=(0,v)$ are named as semitrivial solutions. Finely, $z=(u,v)\geq 0 \ (>0)$ is non-negative (positive) solution, whose meaning is $u,v\geq 0\  (u,v>0)$. 


To state our principal results, let us assume the  assumptions:
\begin{itemize}
\item[$(A)$] $b$ is a continuous function satisfying $||b||_\infty = 1$ and $b^{+} \neq 0$,
\smallskip
\item[$(B)$] $a,c$ are also continuous functions  that satisfy $||a||_\infty=||c||_\infty=1$, $a^{+} \neq 0$ and $c^{+} \neq 0$.
\end{itemize}

We have.
\begin{theorem}[Nonsingular Case]\label{teorema1} Assume that $(\phi_{1}) - (\phi_{3})$, $(A)$, $(B)$ and $(H)$ hold. If $q >1$, then there exists a $\lambda_{\star}> 0$ such that  System \eqref{eq1} admits at least two nonnegative solutions for each $\lambda,\mu\geq 0$ given satisfying $0<\lambda+\mu \leq   \lambda_{\star}$. One of them is a ground state solution $\bar{z}_{\lambda, \mu}$ and the other one a bound state solution  $\tilde{z}_{\lambda, \mu}$. Furthermore, we obtain   that
$\bar{z}_{\lambda, \mu},\tilde{z}_{\lambda, \mu}\in W \setminus\{0,z_1,z_2\}$,  $J(\bar{z}_{\lambda, \mu}) < 0<J(\bar{z}_{\lambda, \mu}) $ and $\displaystyle \lim_{\lambda, \mu \rightarrow 0^{+}} \|\bar{z}_{\lambda, \mu}\| = 0$.
\end{theorem}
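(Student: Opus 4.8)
The plan is to run the Nehari/fibering-map method on the $C^1$ functional $J$ (it is $C^1$ here because $q>1$). Set $\N=\{z\in W\setminus\{0\}:\langle J'(z),z\rangle=0\}$ and, for each $z=(u,v)\in W\setminus\{0\}$, introduce the fibering map $\psi_z(t):=J(tz)$, $t>0$. Writing $\mathcal{A}(t):=\Int A(tz)$ and using $\Int K(tz)=t^q\Int K(z)$, $\Int Q(tz)=t^{\alpha+\beta}\Int Q(z)$, one gets $\psi_z'(t)=\mathcal{A}'(t)-t^{q-1}\Int K(z)-t^{\alpha+\beta-1}\Int Q(z)$, so $tz\in\N\iff\psi_z'(t)=0$; accordingly split $\N=\N^+\cup\N^0\cup\N^-$ by the sign of $\psi_z''(1)$. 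The preliminary work is the fine analysis of $\psi_z$: inequality \eqref{22} yields $\ell\,\mathcal{A}(t)\le t\mathcal{A}'(t)\le m\,\mathcal{A}(t)$ and $(\ell-1)\mathcal{A}'(t)\le t\mathcal{A}''(t)\le(m-1)\mathcal{A}'(t)$ with $\ell:=\min\{\ell_i\}$, $m:=\max\{m_i\}$, and since $q<\ell\le m<\alpha+\beta<\min\{\ell_i^*\}$ one checks that $\psi_z$ decreases near $0$ (hence is negative there) whenever $\Int K(z)>0$ and that $\psi_z(t)\to-\infty$ whenever $\Int Q(z)>0$. Using $a^+\neq0$, $c^+\neq0$, $b^+\neq0$ to produce test pairs with $\Int K(z)>0$ and with $\Int Q(z)>0$ shows $\N^+$ and $\N^-$ are nonempty whenever $\lambda+\mu>0$.

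The structural step is to produce $\lambda_\star>0$ with $\N^0=\emptyset$ for $0<\lambda+\mu\le\lambda_\star$. If $z\in\N^0$ then $\psi_z'(1)=\psi_z''(1)=0$ give $\Int A'(z)z=\Int K(z)+\Int Q(z)$ and $(q-1)\Int K(z)+(\alpha+\beta-1)\Int Q(z)=\mathcal{A}''(1)$; combining with $(\ell-1)\Int A'(z)z\le\mathcal{A}''(1)\le(m-1)\Int A'(z)z$ gives the pinching $\tfrac{\alpha+\beta-m}{m-q}\Int Q(z)\le\Int K(z)\le\tfrac{\alpha+\beta-\ell}{\ell-q}\Int Q(z)$. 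Feeding this back into the constraint and using the Orlicz--Sobolev estimates $\Int K(z)\le C(\lambda+\mu)\|z\|^q$, $\Int Q(z)\le C\|z\|^{\alpha+\beta}$, $\Int A'(z)z\ge c\min\{\|z\|^\ell,\|z\|^m\}$ produces simultaneously a uniform lower bound $\|z\|\ge c_1>0$ and an upper bound $\|z\|\le C(\lambda+\mu)^{1/(m-q)}$ (resp.\ $C(\lambda+\mu)^{1/(\ell-q)}$), incompatible once $\lambda+\mu$ is small; this is exactly where hypothesis $(H)$ is consumed, since it makes these exponents line up. The same estimates show $J$ is bounded below on $\N^+$ with $\inf_{\N^+}J<0$ (on $\N^+$ necessarily $\Int K(z)>0$, and at the valley of $\psi_z$ the value is negative), that $J\ge\delta>0$ on $\N^-$ (again via $(H)$: there $J(z)\ge c_0\Int A'(z)z$ and $\N^-$ is bounded away from $0$), and that $\N^+\subset B(0,r(\lambda+\mu))$ with $r(\lambda+\mu)\to0$ as $\lambda+\mu\to0^+$.

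Once $\N^0=\emptyset$, the sets $\N^+$ and $\N^-$ are relatively clopen complete $C^1$-submanifolds of the $C^1$-manifold $\N$. Apply Ekeland's variational principle to $J$ on $\N^+$ and on $\N^-$ to get minimizing Palais--Smale sequences for $J|_\N$; because $\N^0=\emptyset$ the Lagrange multiplier vanishes, so these are Palais--Smale sequences for $J$ on $W$. They are bounded by the modular estimates, and here subcriticality is decisive: $q,\alpha+\beta<\min\{\ell_i^*\}$ makes $W^{1,\Phi_i}_0(\Omega)\hookrightarrow L^q(\Omega)$ and $\hookrightarrow L^{\alpha+\beta}(\Omega)$ compact, so $\Int K$, $\Int Q$ and their derivatives pass to the limit, and the $(S_+)$ property of $-\Delta_{\Phi_i}$ upgrades weak to strong convergence. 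Thus both infima are attained, at $\bar z_{\lambda,\mu}\in\N^+$ and $\tilde z_{\lambda,\mu}\in\N^-$, which (the multipliers being zero) are weak solutions of \eqref{eq1}; since $A,K,Q$ depend on $z$ only through $(|\nabla u|,|\nabla v|,|u|,|v|)$, replacing $(u,v)$ by $(|u|,|v|)$ keeps them in $\N^\pm$ with the same energy, so we may take both nonnegative. They are nontrivial as they lie on $\N$ and distinct since $J(\bar z_{\lambda,\mu})<0<J(\tilde z_{\lambda,\mu})$.

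It remains to exclude the semitrivial profiles $z_1=(u,0)$, $z_2=(0,v)$ and to get the decay. If, say, $\bar z_{\lambda,\mu}=(u,0)$, then $\Int Q=0$; using $b^+\neq0$ I choose $w\neq0$ concentrated where $b>0$ and where $u\not\equiv0$, consider the family $(tu,sw)$, and show that moving slightly in the $s$-direction and reprojecting onto $\N^+$ (resp.\ $\N^-$ for $\tilde z_{\lambda,\mu}$) strictly lowers $J$, contradicting minimality; the symmetric argument rules out $(0,v)$. Finally $\|\bar z_{\lambda,\mu}\|\to0$ follows at once from $\N^+\subset B(0,r(\lambda+\mu))$ with $r(\lambda+\mu)\to0$. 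I expect the main obstacle to be precisely this fibering analysis for the non-homogeneous operator: because $\mathcal{A}(t)$ is not a monomial, the clean $p$-Laplacian shape of $\psi_z$ must be recovered solely from \eqref{22}, and the quantitative facts $\N^0=\emptyset$ and $\inf_{\N^+}J<0<\inf_{\N^-}J$, on which everything else rests, are exactly what the rigid condition $(H)$ is tailored to guarantee.
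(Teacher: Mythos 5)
Your proposal is correct and follows essentially the same route as the paper: Nehari decomposition $\N=\N^+\dot\cup\,\N^0\dot\cup\,\N^-$, fibering-map analysis driven by the inequalities \eqref{22}, emptiness of $\N^0$ for $\lambda+\mu$ small via a simultaneous lower/upper bound on $\|z\|$, coercivity of $J$ on $\N$, the sign split $\inf_{\N^+}J<0<\inf_{\N^-}J$, minimization on each piece using subcriticality of the embeddings $W\hookrightarrow L^q\times L^q$, $W\hookrightarrow L^{\alpha+\beta}\times L^{\alpha+\beta}$, the absolute-value trick for nonnegativity, exclusion of semitrivial profiles by projecting a small perturbation $(u,sw)$ with $w$ supported in $\{b>0\}$ back onto $\N^\pm$, and the decay of $\|\bar z_{\lambda,\mu}\|$ from the $\N^+$ bound. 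The only cosmetic divergence is that you invoke Ekeland plus a Palais--Smale/$(S_+)$ argument, whereas in the nonsingular case the paper minimizes directly, proves strong convergence of a weakly convergent minimizing sequence (its Proposition~\ref{strong-converg}), and then passes to the whole space through the Lagrange multiplier argument in Lemma~\ref{criticalpoint}; the paper reserves the Ekeland machinery for the singular case $0<q<1$ where $J$ is not $C^1$. Your ``pinching'' of $\int K$ against $\int Q$ on $\N^0$ is a compact reorganization of the paper's two estimates \eqref{des1} and \eqref{des2} and yields the same threshold $\eta_1$.
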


\noindent For the singular case, let us consider the assumption.
\begin{itemize}
\item[$(C)$] $a,c$ and $b$ are nonnegative continuous functions satisfying $||a||_\infty=||b||_\infty=||c||_\infty=1$.
\end{itemize}

\begin{theorem}[Singular Case]\label{teorema2} Assume that $(\phi_{1}) - (\phi_{3})$, $(C)$ and $(H)$ hold. If $0 < q < 1$, then there exists a $\lambda^{\star}> 0$ such that System \eqref{eq1} admits at least two positive solutions for each $ \lambda,\mu\geq 0$ given satisfying $0<\lambda+\mu<\lambda^{\star}$. One of them is a ground state $\bar{z}_{\lambda, \mu}$ and the other one a bound state $\tilde{z}_{\lambda, \mu}$.  Moreover, $c d\leq \bar{z}_{\lambda, \mu}, \tilde{z}_{\lambda, \mu} \in W \setminus\{0,z_1,z_2\}$, $J(\bar{z}_{\lambda, \mu}) < 0<J(\tilde{z}_{\lambda, \mu}) $ and $\displaystyle \lim_{\lambda,\mu\rightarrow 0^{+}} \|\bar{z}_{\lambda, \mu}\| = 0$, for some real constant $c>0$, where $d(x):=\mbox{dist}(x,\partial \Omega), x \in \Omega$ is the distance function to the boundary of $\Omega$.
\end{theorem}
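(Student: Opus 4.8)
The plan is to implement the Nehari--fibering scheme sketched in the Introduction, the genuinely new issues being the nonhomogeneity of the operators $\Delta_{\Phi_i}$ and the failure of differentiability of $J$ caused by the singular exponent $0<q<1$. Fix $\lambda,\mu\ge0$ with $0<\lambda+\mu$, and for $z=(u,v)\in W\setminus\{0\}$ introduce the fibering map $\gamma_z(t):=J(tz)$, $t>0$. Even though $J$ is not Gateaux differentiable on $W$, the scalar map $\gamma_z$ is $C^1$ on $(0,\infty)$ since $t\mapsto t^q$ is, with
\[
\gamma_z'(t)=\Int A'(tz)z-t^{\,q-1}K(z)-t^{\,\alpha+\beta-1}Q(z),\qquad t>0.
\]
Set $\N:=\{z\in W\setminus\{0\}:\gamma_z'(1)=0\}$ and decompose $\N=\N^+\cup\N^0\cup\N^-$ according to the sign of $\gamma_z''(1)$. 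Using $(\phi_1)$--$(\phi_3)$ and the inequalities \eqref{22}, the ordering of exponents $q<\ell_i\le m_i<\alpha+\beta<\min\{\ell_i^*\}$ forces $\gamma_z$ to have the expected shape: under $(C)$ one has $K(z)\ge0$ and $Q(z)\ge0$, and when these are positive $\gamma_z$ leaves $0$ from below with infinite slope, has a strict local minimum at some $t^+(z)$ with $\gamma_z(t^+(z))<0$, a strict local maximum at some $t^-(z)>t^+(z)$, and tends to $-\infty$ afterwards; thus $t^\pm(z)z\in\N^\pm$. Hypothesis $(H)$, equivalently $0<q<\ell_i(\alpha+\beta-m_i)/(\alpha+\beta-\ell_i)$, is exactly what is needed to make this dichotomy uniform.

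First I would show that there is $\lambda^\star>0$ so that $\N^0=\emptyset$ for $0<\lambda+\mu<\lambda^\star$: inserting a hypothetical $z\in\N^0$ into $\gamma_z'(1)=0$ and $\gamma_z''(1)=0$ and using the Orlicz--Sobolev embedding constants leads, via \eqref{22} and $(H)$, to a contradiction once $\lambda+\mu$ is small; this is the only place where the lower bound in $(H)$ is really exploited. Consequently $\N=\N^+\sqcup\N^-$ with $\N^\pm$ relatively open $C^1$ manifolds, the projections $z\mapsto t^\pm(z)$ are continuous, $J$ is coercive on $\N$ by \eqref{22} and $(H)$, and
\[
c^+_{\lambda,\mu}:=\inf_{\N^+}J<0\le\inf_{\N^-}J=:c^-_{\lambda,\mu},
\]
with $c^-_{\lambda,\mu}$ bounded below by a positive constant for $\lambda+\mu$ small, while $c^+_{\lambda,\mu}=\inf_{\N}J$. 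An energy comparison, adding a small second component to a semitrivial element and projecting onto $\N^\pm$, shows that $c^+_{\lambda,\mu}$ and $c^-_{\lambda,\mu}$ are not attained at $z_1$ or $z_2$. To produce minimizers, apply Ekeland's variational principle on $\overline{\N^+}$ and on $\N^-$, obtaining sequences $\{z_n^\pm\}$ that are minimizing and almost critical for $J$ constrained to $\N^\pm$; coercivity gives boundedness, reflexivity of $W$ gives weak limits $\bar z,\tilde z$, and the subcritical bound $\alpha+\beta<\min\{\ell_i^*\}$ gives compactness of the embeddings controlling $K$ and $Q$. Together with $\N^0=\emptyset$ and the energy comparison this forces $\bar z\in\N^+$ and $\tilde z\in\N^-$ with all components nontrivial, and a Br\'ezis--Lieb argument together with the strict convexity of $A$ upgrades weak to strong convergence, so $J(\bar z)=c^+_{\lambda,\mu}$ and $J(\tilde z)=c^-_{\lambda,\mu}$.

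The main obstacle is to pass from these constrained minimizers to weak solutions, since $J$ is not differentiable on all of $W$. I would first prove the a priori bound $c\,d(x)\le\bar u,\bar v$ (and the analogue for $\tilde z$): testing the minimality of $\bar z$ against nonnegative perturbations shows that each component is a nonnegative weak supersolution of a singular equation with a nonnegative singular right-hand side, and a sub/supersolution comparison together with a Hopf-type boundary estimate for the $\Phi_i$-Laplacian (here $(C)$ and $0<q<1$ enter) yields the distance-function lower bound; in particular both components are strictly positive. With this bound the singular integrands $a(x)\bar u^{q}$ and $c(x)\bar v^{q}$ are smooth in the minimizer on each region $\{d(x)\ge\varepsilon\}$, while their near-boundary parts have uniformly controlled difference quotients, so that $J$ turns out to be Gateaux differentiable at $\bar z$ and at $\tilde z$ along every direction of $W$. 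A Lagrange-multiplier computation on the constraint $\gamma_z'(1)=0$ then produces a multiplier which must vanish precisely because $\bar z,\tilde z\notin\N^0=\emptyset$; hence $\langle J'(\bar z),\varphi\rangle=\langle J'(\tilde z),\varphi\rangle=0$ for every $\varphi\in W$, so $\bar z$ and $\tilde z$ are positive weak solutions of \eqref{eq1} lying in $W\setminus\{0,z_1,z_2\}$, with $J(\bar z)=c^+_{\lambda,\mu}<0<c^-_{\lambda,\mu}=J(\tilde z)$; since every nontrivial solution lies in $\N$ and $c^+_{\lambda,\mu}=\inf_{\N}J$ is attained at $\bar z$, the latter is a ground state and $\tilde z$ a bound state. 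Finally, fixing $w$ with $K(w)>0$ and estimating $\|t^+(w)w\|$ shows $t^+(w)\to0$ as $\lambda+\mu\to0^+$, whence $\|\bar z_{\lambda,\mu}\|\to0$. The delicate points I expect to be the uniform exclusion of $\N^0$ and, above all, the Gateaux-differentiability-at-the-minimum step, which for the nonhomogeneous operator must be carried out through the inequalities \eqref{22} in place of scaling.
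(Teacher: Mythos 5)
Your outline correctly reproduces the global architecture of the paper's argument (fibering map, exclusion of $\N^0$, coercivity of $J$ on $\N$, Ekeland on $\N^{\pm}$, compactness from the subcritical embedding, exclusion of the semitrivial states, and the asymptotics $\|\bar z_{\lambda,\mu}\|\to 0$), but at the crucial step ``from constrained minimizer to weak solution'' you take a route that is genuinely different from the paper's and that, as written, has a gap.

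The paper never tries to establish Gateaux differentiability of $J$ at the minimizer and then apply a Lagrange multiplier rule. Instead it works \emph{one-sidedly}: using Lemma \ref{proj} (an implicit-function curve $t\mapsto f_n(tw)(z_n+tw)$ staying inside $\N^{+}$), Ekeland's (ii), Remark \ref{limites} (the limits of the difference quotients $R_i(t)/t$), and Fatou's lemma applied to the \emph{nonnegative} quotient $[\lambda a|u_n+t\varphi_1|^q-\lambda a|u_n|^q]/t$, one obtains the inequality
$\Int A'(z)\varphi-\Fr{1}{q}K'(z)\varphi-\Fr{1}{\alpha+\beta}Q'(z)\varphi\ge 0$
for all $\varphi\ge 0$. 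This is then upgraded to an equality for arbitrary $\varphi\in W$ by the Yijing-type truncation $\psi=([u+\epsilon\varphi_1]^+,[v+\epsilon\varphi_2]^+)$ and the fact that $z\in\N$, never invoking differentiability of $J$ at $z$ or a Lagrange multiplier.

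Your route is: (a) show $\bar u,\bar v\ge c\,d(x)$ first, via the supersolution property and a Hopf estimate for the $\Phi_i$-Laplacian; (b) deduce Gateaux differentiability of $J$ at $\bar z$; (c) run the Lagrange multiplier argument on the constraint $\gamma_z'(1)=0$. Each of (a)--(c) requires more than you supply. For (a) you need an Orlicz--Sobolev Hopf lemma/comparison principle that you do not cite; the paper itself does not go this way and obtains positivity only qualitatively (from $\int K'(z)\varphi<\infty$ forcing $z>0$ a.e.), relegating the $cd$-lower bound to known regularity of supersolutions. For (b), even with $\bar u\ge cd$, the formal derivative involves $\int a(x)\bar u^{q-1}\varphi_1\lesssim\int d^{q-1}|\varphi_1|$, which is finite for all $\varphi_1\in W_0^{1,\Phi_1}(\Omega)$ only after invoking a Hardy inequality (via $W_0^{1,\Phi_1}\hookrightarrow W_0^{1,\ell_1}$), and one must still justify the interchange of limit and integral --- the dominated-convergence majorant on the $\varphi_1<0$ side is not free. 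You have stated the conclusion but not how these obstacles are overcome. For (c), the Lagrange multiplier rule needs a differentiable constraint functional and $G'(z)\neq 0$ at the minimizer; the constraint map $z\mapsto\gamma_z'(1)$ has the same singular $K$-term, so its differentiability needs exactly the same analysis as (b) and is not automatic. In short: the overall scaffolding of your proposal matches the paper, but the key analytic step is a different and not fully justified argument; the paper's Fatou/truncation approach sidesteps Gateaux differentiability altogether, which is both more robust for $0<q<1$ and avoids the circularity risk (needing $\bar z$ to solve something before one can deduce the $cd$ bound from which differentiability would follow).

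A smaller point: you assert that $\gamma_z$ ``leaves $0$ from below with infinite slope''. In fact $\gamma_z(t)\sim -\,t^q K(z)/q$ as $t\to 0^+$, so the correct picture is that $\gamma_z$ decreases from $0$; the derivative $\gamma_z'(t)\to-\infty$ only when $K(z)>0$, which under $(C)$ can fail on subdomains where $a=c=0$. The paper handles this by splitting the analysis of the fibering map according to the signs of $\int K(z)$ and $\int Q(z)$ (Lemma \ref{fib}), a case distinction your outline suppresses.
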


As consequence of our results, we obtain existence of non-negative ground and bound state solutions (for $q >1$) and positive ground and bound state solutions (for $0<q<1$) for several quasilinear elliptic systems. Just to highlight this, let us consider the two below classes. As a first example, we have the   system with the $(p_{1}, p_{2})$-Laplacian operator
\begin{equation*}
\left\{\begin{array}{rcl}
- \Delta_{p_{1}} u  & = & \lambda a(x)|u|^{q-2}u  + \frac{\alpha}{\alpha+\beta}b(x)|u|^{\alpha-2}u|v|^{\beta} \, \mbox{ in }\,    \Omega, \\[1ex]
-\Delta_{p_{2}} v  & = & \mu c(x)|v|^{q-2}v  + \frac{\beta}{\alpha+\beta}b(x)|u|^{\alpha}|v|^{\beta-2}v \, \mbox{ in }\,   \Omega, \\[1ex]
u&=&v  = 0 \,\, \mbox{on}\,\, \partial\Omega 
\end{array} \right.
\end{equation*}
with $\Omega \subset \mathbb{R}^{N}$ being a bounded smooth domain, $N \geq 2$, $1 < p_{1} \leq p_{2} < N$ and either $0<q <1$ or $1 \leq q < p_{1} \leq p_{2} < \alpha + \beta < \min \{p_{i}^{*}\}$, where $p_{i}^{*} = p_{i}N/ (N - p_{i}), i = 1,2$. 

Another example that is reached by our theorems  is the system with $p_{i}\verb"&"r_{i}$-Laplacian operator. More specifically, 
\begin{equation*}
\left\{\begin{array}{rcl}
- \Delta_{p_{1}} u  - \Delta_{r_{1}} u   & = & \lambda a(x)|u|^{q-2}u  + \frac{\alpha}{\alpha+\beta}b(x)|u|^{\alpha-2}u|v|^{\beta} \, \mbox{ in }\,    \Omega, \\[1ex]
-\Delta_{p_{2}} v  - \Delta_{r_{2}}v & = & \mu c(x)|v|^{q-2}v  + \frac{\beta}{\alpha+\beta}b(x)|u|^{\alpha}|v|^{\beta-2}v \, \mbox{ in }\,   \Omega, \\[1ex]
u&=&v  = 0 \,\, \mbox{on}\,\, \partial\Omega 
\end{array} \right.
\end{equation*}
with  $1 < p_{i} \leq r_{i} < N$,  $p_{i} \leq r_{i} < \alpha + \beta < \min\{p_{i}^{*}\}$ and either $0<q <1$ or $1<q < p_i$. The main interest point here is to consider the case in what $p_{1},r_{1}, p_{2}, r_{2}$ are different. 

 Below, let us highlight some of the main contributions of this paper to the current literature:
\begin{enumerate}
\item[$(i)$] we deal with all difficulties that arise from the nature of the $(\Phi_{1},\Phi_{2})$-Laplace operator and so this enable us to extend the former and classical results to a wide class of  non-homogeneous operators as well, even for indefinite potentials $a,b,c: \Omega \rightarrow \mathbb{R}$ in the nonsingular case,
\item[$ii)$] Theorem \ref{teorema1} gathers different classes of problems to exhibit multiplicity of nonnegative solutions by unifying various  approaching in early literature,
\item[$iii)$] we fit some approaches in the context of homogeneous operators to that one non-homogeneous. One sensible point is to do this on the  strategy of Yijing \cite{yijing},
\item[$iv)$] Theorem \ref{teorema2} guarantee not only a multiplicity result of positive solutions to the singular problem \eqref{eq1} but principally existence of a positive ground state and a bound state solutions. It is new even for Laplacian operator. 
\end{enumerate}

To ease our future references, let us set some notations:	
\begin{itemize}
	\item $C$, $\tilde{C}$, $C_{1}$, $C_{2}$,... denote positive constants (possibly different).
	\item $o_{n}(1)$ denotes a sequence that converges to $0$ as $n\rightarrow\infty$;
	\item $\displaystyle\lim_{t\rightarrow 0^+}f(t)$ denotes the right-hand limit as $t \rightarrow 0$  for any function $f:\Omega \rightarrow \mathbb{R}$;
    \item the norms in $L^{s}(\Omega)$, for $1\leq s<\infty$, and $L^{\infty}(\Omega)$ will be denoted by $\|\cdot\|_{s}$ and $\|\cdot\|_{\infty}$,  respectively.
    \item it will be considered on $L^{s}(\Omega)\times L^{s}(\Omega)$ the norm $\|(u,v)\|_{s}=\left(\|u\|^{s}_{s}+\|v\|^{q}_{s}\right)^{1/s}$,
 \item for $z,\varphi \in W$, with $z=(u,v)$ and $\varphi=(\varphi_1,\varphi_2)$, we define:
    \begin{itemize}
    \item $A(z)=\Phi_1(|\nabla u|)+\Phi_2(|\nabla v|)$,
	\item $ A^\prime(z)\varphi=\phi_1(|\nabla u|)\nabla u\nabla \varphi_1+\phi_2(|\nabla v|)\nabla v\nabla \varphi_2$,
	\item $B(z)= A^\prime(z)z=\phi_1(|\nabla u|)|\nabla u|^2+\phi_2(|\nabla v|)|\nabla v|^2$,
	\item $C(z)\varphi=\phi_1^\prime(|\nabla u|)|\nabla u|\nabla u\nabla \varphi_1+\phi_2^\prime(|\nabla v|)|\nabla v|\nabla v\nabla \varphi_2$
	\item $D(z)=C(z)z=\phi_1^\prime(|\nabla u|)|\nabla u|^3+\phi_2^\prime(|\nabla v|)|\nabla v|^3$,
	\item $K(z)=\lambda a(x)|u|^q+\mu c(x)|v|^q$,
	\item $K^\prime(z)\varphi=\lambda q a(x)|u|^{q-2}u\varphi_1+\mu q c(x)|v|^{q-2}v\varphi_2$,
    \item $Q(z)= b(x)|u|^\alpha|v|^\beta \mbox{~and } Q^\prime(z)\varphi= b(x)( \alpha|u|^{\alpha-2}u|v|^\beta \varphi_1+\beta|v|^{\beta-2}v|u|^\alpha \varphi_2)$,
	\item $Q^\prime(z)\varphi= b(x)( \alpha|u|^{\alpha-2}u|v|^\beta \varphi_1+\beta|v|^{\beta-2}v|u|^\alpha \varphi_2)$,
	\item $Q^\prime(z)z=(\alpha+\beta)Q(z)$,
	\item $H(z)= A(z)-\Fr{1}{\alpha+\beta}B(z)$,
    \end{itemize}
 \item   the $\bar{S_i},S_i,S_i^*$ denote the best Sobolev constants for the embedding
$W^{1,\Phi_i}(\Omega)\hookrightarrow L^q(\Omega)$, whenever $q> 1$, $W^{1,\Phi_i}(\Omega)\hookrightarrow L^{\alpha+\beta}(\Omega)$ and $W^{1,\Phi_i}(\Omega)\hookrightarrow L^{\ell_i^*}(\Omega)$, respectively,
\item $\bar{S}=\max\{\bar{S_i}\},~{S}=\max\{{S_i}\}$ and ${S_*}=\max\{{S_i^*}\}$,
    \item the functions $a^{+} = max(a, 0)$ and $a^{-} = min(a,0)$ stand for the positive and negative parts of each $a \in L^{\infty}(\Omega)$ given.
\end{itemize}

The paper is organized as follows: in the forthcoming Section \ref{sec-nehari}, we consider the Nehari method for the quasilinear elliptic System \eqref{eq1}, while Section \ref{fib-map} is devoted to study the fibering map linked to the Nehari manifold. In Section \ref{main-res} we present the proof of our main results. In the last section (in the appendix) we gather some basic topics on Orlicz and Orlicz-Sobolev spaces to ease the reading of the reader.


\section{The Nehari manifold}\label{sec-nehari}

In this section we shall prove some properties for the Nehari method in the context of systems considering the Orlicz-Sobolev setting. The main feature here
is to give information on the critical points for fibering map associated to the energy functional $J$. For an overview on the Nehari
method we infer the reader to the interesting works \cite{Brown2,Brown1}. The first feature here is to consider the fibering map $\gamma_z: [0,+\infty)\to \mathbb{R}$ given by
$$\gamma_z(t):=J(tz)=\Int A(tz)-\Fr{ t^{q}}{q}K(z)-\Fr{t^{\alpha+\beta}}{\alpha+\beta}Q(z), z\in W\setminus \{(0,0)\}.$$
The fibering map has been considered together with the Nehari manifold in order to ensure the existence of critical points for $J$. For concave-convex nonlinearities is important a great knowledge around the geometry of $\gamma_{z}$. Here we infer the reader to important works on the Nehari method \cite{Brown2,Brown1,Drabek,wu,wu3}.

First of all, we shall consider the nonsingular case, that is, we consider $q > 1$. Latter on, we shall discuss the singular case showing some useful tools in order to give a description on the fibering maps. 
Now, for the nonsingular case we point out that $\gamma_{z}: (0, \infty) \rightarrow \mathbb{R}$ is in $C^{1}$ class thanks to hypotheses $(\phi_{1}) - (\phi_{2})$. More specifically, we obtain
\begin{equation*}
\begin{array}{rcl}
\gamma^\prime_z(t)&=& \Int t^{-1}B(tz)-{ t^{q-1}}K(z)-{t^{\alpha+\beta-1}}Q(z),\ \ t>0.
\end{array}
\end{equation*}
The Nehari manifold associated to the energy functional $J$ is defined by
\begin{equation}\label{nehari}
\begin{array}{rcl}
\mathcal{N}_{\lambda,\mu}&=& \left\{z\in W\setminus \{0\}: \gamma^\prime_z(1)=0 \right\} = \left\{z\in W\setminus \{0\}:\Int B(z)=\Int K(z) + Q(z)\right\}.
\end{array}
\end{equation}
Notice that when $z$ is a nontrivial weak solution of System $\eqref{eq1}$ we obtain that $z \in \N$. Moreover, using \eqref{nehari},  for any $z\in \mathcal{N_{\lambda,\mu}}$ we obtain

\begin{equation}\label{eq2}
J(z)=\Int A(z) - \Fr{1}{\alpha+\beta} B(z)+\left(\Fr{1}{\alpha +\beta}-\Fr{1}{q}\right)K(z),
\end{equation}
or equivalently
\begin{equation*}\label{eq3}
J(z)= \Int A(z) - \Fr{1}{q}B(z)+\left(\Fr{1}{q}-\Fr{1}{\alpha +\beta}\right)Q(z).
\end{equation*}
It is easy to see that $tz\in\mathcal{N}_{\lambda,\mu}$ if and only if $\gamma^\prime_z(t)=0, z \neq 0$.  Therefore, $z \in\mathcal{N}_{\lambda,\mu}$ if and  only if $\gamma^\prime_z(1)=0$. In other words, it is sufficient to find stationary points of the fibering map in order to get critical points for $J$ on $\mathcal{N}_{\lambda,\mu}$.
Note that $J$ is not in $C^2$ class in general due the fact that the operator $(\Phi_{1}, \Phi_{2})$-Laplacian can be singular at the origin. Hence the second derivative is not well defined for any direction $(h_1, h_{2}) \in W$. On the other hand, using hypothesis $(\phi_{3})$, for the nonsingular case we deduce that $t \mapsto \gamma_{z}(t)$ is in $C^{2}$ class for any $t > 0$ with second derivative given by
\begin{eqnarray*}
	\gamma^{\prime\prime}_z(t)= &\Int t^{-1}D(tz)+t^{-2} B(tz)
	-(q-1)t^{q-2}K(z)
	+ (\alpha+\beta-1)t^{\alpha+\beta-2} Q(z).
\end{eqnarray*}
In general, applying hypothesis $(\phi_{3})$, we mention that the map $z \mapsto J^{\prime \prime}(z)(z,z)$ is well defined for each $z \in W$ which provides us a continuous function. In particular, we know that $\gamma^{\prime \prime}_{z}(1) = J^{\prime \prime}(z)(z,z)$ for any $z \in \mathcal{N}_{\lambda,\mu}$.

As was pointed by Brown et al \cite{Brown2,Brown1} it is natural to split $\N$ into three sets as follows:
$$\N^+:=\{z\in \N:\gamma^{\prime\prime}_z(1)>0\};$$
$$\N^-:=\{z\in \N:\gamma^{\prime\prime}_z(1)<0\};$$
$$\N^0:=\{z\in \N:\gamma^{\prime\prime}_z(1)=0\}.$$
Here we mention that $\N^+,~\N^-,~\N^0$ corresponds to critical points of minimum, maximum and inflexions points for the fibering map $\gamma_z$, respectively. On this subject we refer the interesting reader also to Tarantello \cite{tarantello}.

\begin{rmk}\label{gamma''}
	It is no hard to verify that
	\begin{equation}\label{eq5}
	\begin{array}{rcl}
	\gamma^{\prime\prime}_z(1)&=& \Int D(z)+(2-q)B(z)
	-(\alpha+\beta-q) Q(z)\\[3ex]
	&=&
	\Int D(z) +(2-(\alpha+\beta))B(z)
	-(q-(\alpha+\beta))K(z).
	\end{array}
	\end{equation}
	holds true for any $z\in \N$ where was used identity \eqref{nehari}.
\end{rmk}

As a first step in order to obtain existence of solutions for the System \eqref{eq1} we shall prove that $J$ is coercive and bounded from below on $\N$. This result allow us to solve a minimization problem for the energy functional $J$ finding a ground state solution to the quasilinear elliptic Problem \eqref{eq1}.

{\begin{lem}\label{tec}
		Suppose that $(\phi_1)-(\phi_3)$ hold. Then there exist positive constants ${A_1}, R$ and $\theta_i$ in such way that
		\begin{itemize}
			\item[(i)]$\Int H(z)\geq  {A_1}||z||^{\theta_i}$,
			\item[(ii)]$\Int Q(z)\leq S^{\alpha+\beta}||z||^{\alpha+\beta},$
			\item[(iii)] $\Int K(z)\leq  	R||z||^q$
		\end{itemize}
		holds true for any $z \in W$.
	\end{lem}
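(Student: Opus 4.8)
The plan is to prove the three bounds separately; $(i)$ is the only one that really uses the structure of the $(\Phi_1,\Phi_2)$-Laplacian, and it will be the technical heart of the argument.

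For $(i)$, I would start from the bound $\phi_i(t)t^{2}\le m_i\Phi_i(t)$ contained in \eqref{22}, which gives the pointwise estimate $B(z)\le\max\{m_1,m_2\}\,A(z)$ on $\Omega$. Since hypothesis $(H)$ forces $\max\{m_i\}<\alpha+\beta$, it follows that
\[
H(z)=A(z)-\tfrac{1}{\alpha+\beta}B(z)\ \ge\ \Bigl(1-\tfrac{\max\{m_i\}}{\alpha+\beta}\Bigr)A(z)\ =:\ c_0\,A(z),\qquad c_0>0 .
\]
It then remains to bound $\int_{\Omega}A(z)=\int_{\Omega}\Phi_1(|\nabla u|)+\int_{\Omega}\Phi_2(|\nabla v|)$ from below by a power of $\|z\|=\|u\|+\|v\|$. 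For this I would invoke the standard modular--norm inequalities for Orlicz--Sobolev spaces (cf. the Appendix), in the form $\int_{\Omega}\Phi_i(|\nabla w|)\ge\min\{\|w\|^{\ell_i},\|w\|^{m_i}\}$, together with the elementary convexity bound $s^{\theta}+t^{\theta}\ge 2^{1-\theta}(s+t)^{\theta}$ valid for $\theta\ge1$ and $s,t\ge0$. Splitting into the two regimes $\|z\|\le1$ and $\|z\|\ge1$ — in the first the effective exponent is $\theta=\max\{m_i\}$, in the second $\theta=\min\{\ell_i\}$, both $>1$ — yields $(i)$ with $\theta_i$ chosen accordingly and $A_1$ depending only on $c_0$, $\ell_i$, $m_i$.

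Estimates $(ii)$ and $(iii)$ are routine Young/H\"older plus Sobolev-embedding computations. For $(ii)$: bound $\int_{\Omega}Q(z)\le\|b\|_\infty\int_{\Omega}|u|^{\alpha}|v|^{\beta}$ and apply Young's inequality with exponents $\tfrac{\alpha+\beta}{\alpha},\tfrac{\alpha+\beta}{\beta}$ to get $|u|^{\alpha}|v|^{\beta}\le\tfrac{\alpha}{\alpha+\beta}|u|^{\alpha+\beta}+\tfrac{\beta}{\alpha+\beta}|v|^{\alpha+\beta}$; integrating and using the embeddings $W^{1,\Phi_i}_0(\Omega)\hookrightarrow L^{\alpha+\beta}(\Omega)$ (valid since $\alpha+\beta<\min\{\ell_i^{*}\}$ by $(H)$) with best constants $S_i\le S$, together with $\|b\|_\infty\le1$ and $\|u\|^{\alpha+\beta}+\|v\|^{\alpha+\beta}\le(\|u\|+\|v\|)^{\alpha+\beta}$, gives $(ii)$. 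For $(iii)$: bound $\int_{\Omega}K(z)\le\lambda\|a\|_\infty\|u\|_q^{q}+\mu\|c\|_\infty\|v\|_q^{q}$ and use $W^{1,\Phi_i}_0(\Omega)\hookrightarrow L^{q}(\Omega)$ with constants $\bar S_i$ (for $0<q<1$ combine H\"older with the $L^{\ell_i}$ embedding, since $q<\ell_i<\ell_i^{*}$), so that $\|u\|_q\le\bar S_1\|u\|$, $\|v\|_q\le\bar S_2\|v\|$; collecting constants and using $\|u\|^{q}+\|v\|^{q}\le c_q(\|u\|+\|v\|)^{q}$ with $c_q=\max\{1,2^{1-q}\}$ yields $(iii)$, with $R$ depending on $\lambda,\mu$, $\|a\|_\infty$, $\|c\|_\infty$ and $\bar S=\max\{\bar S_i\}$.

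The only genuinely delicate point is $(i)$: because $\Phi_1,\Phi_2$ are non-homogeneous and, in general, distinct, one cannot read off a single power of the norm directly from the modular; the estimate must pass through the Orlicz--Sobolev modular inequalities, whose form changes according to whether each component norm lies below or above $1$, and the two components must then be recombined into a single power of $\|z\|=\|u\|+\|v\|$ by convexity. That case analysis is where the work sits; $(ii)$ and $(iii)$ follow at once once the relevant embeddings are in place.
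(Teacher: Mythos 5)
Your argument follows the paper's own line: item (i) via the pointwise bound $B(z)\le\max\{m_i\}A(z)$ coming from \eqref{22}, reducing $\int_\Omega H(z)$ to a constant times the modular $\int_\Omega A(z)$, and then a modular--norm estimate; items (ii) and (iii) via Young/H\"older plus the Sobolev embeddings into $L^{\alpha+\beta}$ and $L^q$. The one point of divergence is that the paper simply invokes Proposition \ref{estimativanorma} for the lower bound $\int_\Omega A(z)\ge A_1\min\{\|z\|^{\min\{\ell_i\}},\|z\|^{\max\{m_i\}}\}$, whereas you sketch a proof of that proposition inline.

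That sketch has a real imprecision worth noting. Your dichotomy ``$\|z\|\le1$ versus $\|z\|\ge1$'' does not propagate to the components $\|u\|$ and $\|v\|$: when $\|z\|=\|u\|+\|v\|\ge1$ it is perfectly possible that $\|u\|\ge1$ while $\|v\|<1$, and then $\min\{\|v\|^{\ell_2},\|v\|^{m_2}\}=\|v\|^{m_2}$, which is \emph{smaller} than $\|v\|^{\min\{\ell_i\}}$ (since $\|v\|<1$ and $m_2\ge\min\ell_i$). In that mixed regime the convexity step $s^\theta+t^\theta\ge2^{1-\theta}(s+t)^\theta$ with $\theta=\min\{\ell_i\}$ is not available for the second summand, so your ``effective exponent'' bookkeeping breaks. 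The fix is elementary but requires an extra case (for instance observe that when $\|z\|\ge1$ at least one component is $\ge\|z\|/2$, discard the other, and absorb the loss into $A_1$), or one may simply cite Proposition \ref{estimativanorma} as the paper does. This only affects your re-derivation of a result the paper takes as given; with that case handled, the rest of your proposal is sound and matches the intended argument.
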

	\begin{proof} Initially we shall prove the item $(i)$.  According to Proposition \ref{estimativanorma}
		we deduce that
		\begin{eqnarray*}
			\Int H(z)&\geq& \left(1-\Fr{m_1}{\alpha+\beta}\right)\Int \Phi_1(|\nabla u|)|+\left(1-\Fr{m_2}{\alpha+\beta}\right) \int_{\Omega}\Phi_2(|\nabla v|)\nonumber\\
			&\geq& \min\left\{1-\Fr{m_i}{\alpha+\beta}\right\}\Int A(z) \geq  A_1 \min\{||z||^{\min\{\ell_i\}},||z||^{\max\{m_i\}}\}=A_1 ||z||^{\theta_i}
		\end{eqnarray*}
		hold for some constants $A_1>0$ and $\theta_i\in\{\min\{\ell_i\},\max\{m_i\}\}$. This ends the proof of the item $(i)$.
		
		For the proof of item $(ii)$, we apply the Young's inequality and Sobolev embedding proving the following estimates
		\begin{equation}\label{desQ}
		\Int Q(z)\leq S^{\alpha+\beta}||z||^{\alpha+\beta}.
		\end{equation}
		For the proof of item $(iii)$ we use Sobolev embedding in order to prove that
		\begin{equation}\label{desK-1}
		\begin{array}{rcl}
		\Int K(z)&\leq&\mu ||a^+||_\infty\Int|u|^q+\lambda||c^+||_\infty\Int|v|^q\leq (\lambda +\mu)\bar{S}^q||z||^q
		\end{array}
		\end{equation}
		holds true for any $q > 1$. At the same time, for the singular case assuming that $0<q <1$, taking into account H\"older inequality and Sobolev embedding we infer that
		\begin{equation}\label{desK-2}
		\begin{array}{rcl}
		\Int K(z)&\leq&\mu \Int|u|^q+\lambda\Int|v|^q\leq  (\lambda +\mu)S_*(|\Omega|^{\frac{\ell_1^*-q}{\ell_1^*}}+|\Omega|^{\frac{\ell_2^*-q}{\ell_2^*}})||z||^q.
		\end{array}
		\end{equation}
		This finishes the proof.
	\end{proof}
	
	For the next result we consider some powerful estimates in order to get existence and multiplicity of solutions for the main Problem \eqref{eq1}. The main idea here is to consider some ideas discussed in Proposition \ref{fang}. Here for the functions $N$-functions $\Phi_1$ and $\Phi_2$ we shall consider the following result.
	
	\begin{lem}\label{lemA}
		Suppose that $(\phi_1)$-$(\phi_3)$ hold. Then we obtain the following estimates
		\begin{itemize}
			\item[(a)]$\min\left\{t^{\min\{\ell_i\}},t^{\max\{m_i\}}\right\}\Int B(z)\leq \Int B(tz) \leq \max\left\{t^{\min\{\ell_i\}},t^{\max\{m_i\}}\right\}\Int B(z);$\\
			\item[(b)]$\min\{\ell_i\}\min\left\{t^{\min\{\ell_i\}},t^{\max\{m_i\}}\right\}\Int B(z)\leq \Int B^\prime(tz)tz \leq\max\{m_i\} \max\left\{t^{\min\{\ell_i\}},t^{\max\{m_i\}}\right\}\Int B(z),$
		\end{itemize}
		for any $t>0$ and $z\in W, i=1,2$.
	\end{lem}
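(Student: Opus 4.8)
The plan is to reduce both estimates to a one–variable scaling property of the functions
$$\zeta_i(\tau):=\tau\,\Phi_i'(\tau)=\tau^{2}\phi_i(\tau),\qquad \tau\geq 0,\ i=1,2.$$
Indeed, from the definitions in Section~\ref{sec-nehari} one has, pointwise in $\Omega$, $B(z)=\zeta_1(|\nabla u|)+\zeta_2(|\nabla v|)$ and $B(tz)=\zeta_1(t|\nabla u|)+\zeta_2(t|\nabla v|)$; moreover, writing $g_i(\xi):=\zeta_i(|\xi|)$ for $\xi\in\mathbb{R}^{N}$ so that $B(z)=g_1(\nabla u)+g_2(\nabla v)$ and $\nabla_{\xi}g_i(\xi)=\zeta_i'(|\xi|)\,\xi/|\xi|$, the chain rule gives
$$B'(tz)tz=\zeta_1'(t|\nabla u|)\,(t|\nabla u|)+\zeta_2'(t|\nabla v|)\,(t|\nabla v|);$$
here $(\phi_1)$–$(\phi_3)$ guarantee $\zeta_i\in C^{1}([0,\infty))$ with $\zeta_i(0)=\zeta_i'(0)=0$, so these identities also make sense on the set where $\nabla u$ or $\nabla v$ vanishes. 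The first step is then to observe that $\zeta_i$ inherits the growth indices $\ell_i,m_i$: since $\tau\zeta_i'(\tau)/\zeta_i(\tau)=2+\tau\phi_i'(\tau)/\phi_i(\tau)$, the second inequality in \eqref{22} yields
$$\ell_i\,\zeta_i(\tau)\ \leq\ \tau\zeta_i'(\tau)\ \leq\ m_i\,\zeta_i(\tau),\qquad \tau>0,\ i=1,2.$$

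Next I would feed this into the scaling argument of Proposition~\ref{fang} applied to $\zeta_i$ in place of $\Phi_i$ — its proof only uses the preceding logarithmic bound, and equivalently one just integrates $\zeta_i'/\zeta_i$ between $s$ and $ts$ — to obtain
$$\min\{t^{\ell_i},t^{m_i}\}\,\zeta_i(s)\ \leq\ \zeta_i(ts)\ \leq\ \max\{t^{\ell_i},t^{m_i}\}\,\zeta_i(s)\qquad\text{for all }s,t>0.$$
Because $\min\{\ell_i\}\leq\ell_i\leq m_i\leq\max\{m_i\}$, distinguishing the cases $t\geq 1$ and $0<t<1$ one checks $\min\{t^{\min\{\ell_i\}},t^{\max\{m_i\}}\}\leq\min\{t^{\ell_i},t^{m_i}\}$ and $\max\{t^{\ell_i},t^{m_i}\}\leq\max\{t^{\min\{\ell_i\}},t^{\max\{m_i\}}\}$. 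Taking $s=|\nabla u|$ for $i=1$ and $s=|\nabla v|$ for $i=2$, adding the two chains of inequalities and integrating over $\Omega$ gives item~$(a)$. For item~$(b)$, I would evaluate the pointwise bound $\ell_i\,\zeta_i(\tau)\leq\tau\zeta_i'(\tau)\leq m_i\,\zeta_i(\tau)$ at $\tau=t|\nabla u|$ and $\tau=t|\nabla v|$ and sum, getting $\min\{\ell_i\}\,B(tz)\leq B'(tz)tz\leq\max\{m_i\}\,B(tz)$ pointwise in $\Omega$; substituting the pointwise form of the estimate obtained in $(a)$ and integrating over $\Omega$ then yields $(b)$.

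The bookkeeping above is routine; the only point requiring care is the first step — recognizing $B$, $B(tz)$ and $B'(tz)tz$ as the displayed expressions in $\zeta_i$, and checking that $\zeta_i=\tau^{2}\phi_i$ carries the indices $\ell_i,m_i$. This last verification is precisely where the second inequality of \eqref{22} (i.e.\ hypothesis $(\phi_3)$) enters; once it is in hand, the remainder is the same integration/summation scheme already used for the analogous statements about $\Phi_i$ in Proposition~\ref{fang}.
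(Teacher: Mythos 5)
The paper states Lemma~\ref{lemA} without proof, only remarking that it follows from the ideas of Proposition~\ref{fang}. Your argument is correct and is essentially that intended argument: rewriting $B(z)$, $B(tz)$ and $B'(tz)tz$ in terms of $\zeta_i(\tau)=\tau^2\phi_i(\tau)$, noting that $\tau\zeta_i'(\tau)/\zeta_i(\tau)=2+\tau\phi_i'(\tau)/\phi_i(\tau)\in[\ell_i,m_i]$ by \eqref{22}, and then scaling. A very slightly shorter route to the scaling inequality for $\zeta_i$ is to apply Proposition~\ref{fang} directly to $\phi_i$ and multiply by $(ts)^2$, i.e.\ $\zeta_i(ts)=t^2s^2\phi_i(ts)\leq t^2\max\{t^{\ell_i-2},t^{m_i-2}\}\,\zeta_i(s)=\max\{t^{\ell_i},t^{m_i}\}\,\zeta_i(s)$ (and similarly the lower bound), which avoids re-running the logarithmic integration; but your version of that step is equally valid. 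The passage from the individual indices $\ell_i,m_i$ to $\min\{\ell_i\},\max\{m_i\}$ by considering $t\geq 1$ and $0<t<1$ separately, the pointwise summation over $i=1,2$, and the derivation of item~$(b)$ from the pointwise bound $\min\{\ell_i\}B(tz)\leq B'(tz)tz\leq\max\{m_i\}B(tz)$ combined with item~$(a)$ are all correct.
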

	As a consequence, we deduce that $J$ is coercive and bounded from below on the Nehari manifold. More precisely, we consider the following result
	\begin{prop}\label{coercive} Suppose that $(\phi_1)$-$(\phi_3)$ and $(H)$ hold.
		Then the energy functional $J$ is coercive and bounded from below on the Nehari manifold $\mathcal{N}_{\lambda,\mu}$.
	\end{prop}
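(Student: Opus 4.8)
The plan is to use the representation of $J$ that holds only on the constraint set, and then to invoke the one–sided bounds already recorded in Lemma \ref{tec}; the point is that on $\N$ the energy is, up to a lower–order term, comparable to $\int_\Omega A(z)$, and the latter controls $\|z\|$ from above.

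First I would fix $z\in\N$. Since $z$ satisfies the Nehari identity $\int_\Omega B(z)=\int_\Omega K(z)+Q(z)$, identity \eqref{eq2} rewrites the energy as
$$J(z)=\int_\Omega H(z)+\left(\frac{1}{\alpha+\beta}-\frac{1}{q}\right)K(z),$$
with $H(z)=A(z)-\tfrac{1}{\alpha+\beta}B(z)$. Because $(H)$ forces $\alpha+\beta>m_i$, Lemma \ref{tec}(i) gives $\int_\Omega H(z)\ge A_1\min\{\|z\|^{\min\{\ell_i\}},\|z\|^{\max\{m_i\}}\}$. For the remaining term observe that $\tfrac{1}{\alpha+\beta}-\tfrac1q<0$: if $K(z)\ge0$, then Lemma \ref{tec}(iii) yields $\left(\tfrac{1}{\alpha+\beta}-\tfrac1q\right)K(z)\ge-\left(\tfrac1q-\tfrac{1}{\alpha+\beta}\right)R\|z\|^q$, while if $K(z)<0$ the term is nonnegative and the same inequality is trivial (this is exactly where the sign–indefiniteness of $a,c$ in the nonsingular case does no harm). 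Collecting these,
$$J(z)\ \ge\ A_1\min\{\|z\|^{\min\{\ell_i\}},\|z\|^{\max\{m_i\}}\}-\left(\frac1q-\frac{1}{\alpha+\beta}\right)R\,\|z\|^q\qquad\text{for every }z\in\N.$$

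Finally I would read off both conclusions from the scalar function $g(t)=A_1\min\{t^{\min\{\ell_i\}},t^{\max\{m_i\}}\}-\left(\tfrac1q-\tfrac{1}{\alpha+\beta}\right)R\,t^q$, $t\ge0$: since $0<q<\min\{\ell_i\}\le\max\{m_i\}$ (again by $(H)$), $g$ is continuous on $[0,\infty)$, bounded below there (on $[0,1]$ it is bounded by $-\left(\tfrac1q-\tfrac{1}{\alpha+\beta}\right)R$, on $[1,\infty)$ it is continuous and tends to $+\infty$), and $g(t)\to+\infty$ as $t\to\infty$ because the dominant term for large $t$ is $A_1t^{\min\{\ell_i\}}$ with $\min\{\ell_i\}>q$. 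As $J(z)\ge g(\|z\|)$ on $\N$, this shows simultaneously that $J$ is bounded from below on $\N$ and that $J(z)\to+\infty$ whenever $z\in\N$ with $\|z\|\to\infty$, i.e. $J$ is coercive on $\N$.

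The step carrying the weight is elementary but must be handled carefully: controlling $\left(\tfrac{1}{\alpha+\beta}-\tfrac1q\right)K(z)$ uniformly in the sign of $K(z)$, and making sure the exponent delivered by Lemma \ref{tec}(i) strictly exceeds $q$ — both ultimately amount to the gap $q<\min\{\ell_i\}$ built into $(H)$. It is worth stressing that the Nehari constraint is genuinely needed here: on the whole space $W$ the term $-\tfrac{1}{\alpha+\beta}Q(z)$ is of order $\|z\|^{\alpha+\beta}$ with $\alpha+\beta>\max\{m_i\}$, so $J$ fails to be coercive there.
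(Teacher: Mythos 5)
Your proof is correct and takes essentially the same approach as the paper's own terse argument: both start from identity \eqref{eq2} to rewrite $J$ on $\N$, apply Lemma \ref{tec}(i) to lower-bound $\int_\Omega H(z)$ and Lemma \ref{tec}(iii) to upper-bound $\int_\Omega K(z)$, and conclude from the exponent gap $q<\min\{\ell_i\}$. You merely make explicit the sign case check on $\int_\Omega K(z)$ and the elementary single-variable analysis of $g(t)$ that the paper leaves implicit.
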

	\begin{proof}  It is sufficient to see that
		$$J(z)= \Int \left(H(z)-\left(\Fr{1}{q}-\Fr{1}{\alpha+\beta}\right)K(z)\right) \geq A_1 ||z||^{\theta_i}-(\lambda+\mu)\left(\Fr{1}{q}-\Fr{1}{\alpha+\beta}\right)\bar{S}^{{q}}||z||^{q}$$
		holds true for any $z\in \N$ where $A_1>0$ and $\theta_i\in\{\min\{\ell_i\},\max\{m_i\}\}$. This concludes the proof.
	\end{proof}
	
	Now we shall prove that $\N$ is a $C^1$-manifold in the nonsingular case which is crucial in our arguments in order to get our main results. Here we shall apply the Lagrange Multiplier Theorem in order to solve a minimization problem. For nonsingular case or singular case we would like to mention that the sets $\mathcal{N}_{\lambda,\mu}^{-}$ and $\mathcal{N}_{\lambda,\mu}^{+}\cup \{0\}$ are also closed sets. These facts allows us to use the Ekeland's Variational Principle.
	
	\begin{lem}\label{c1} Suppose $(\phi_{1}) - (\phi_{3})$. Assume also that $q \in (0,1)$ or $q > 1$. Then there exists $\eta_{1} > 0$ small enough in such way that for any $(\lambda+\mu) \in (0, \eta_{1})$ we obtain
		\begin{enumerate}
			\item $\N^0=\emptyset$;
			\item $\N=\N^+\dot{\cup}\N^-$ is a $C^1$-manifold for any $q>1$;
			\item  The set $\overline{\N^+}$ is equals to $\N^+\cup \{0\}$. In particular, $\N^+\cup \{0\}$ is closed.
			\item The set $\N^-$ is closed.
		\end{enumerate}
	\end{lem}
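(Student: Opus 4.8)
The plan is to prove item (1) first and then deduce items (2)--(4) from it, using the emptiness of $\N^0$ together with the continuity of the maps $z\mapsto G(z):=\langle J'(z),z\rangle=\int_\Omega\big(B(z)-K(z)-Q(z)\big)$ and $z\mapsto\gamma_z''(1)=J''(z)(z,z)$ (the latter continuous by the discussion preceding Remark~\ref{gamma''}), plus a $\lambda,\mu$-uniform lower bound for $\|z\|$ on $\N^-$.

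For item (1), assume $z\in\N^0$, so $\gamma_z'(1)=0$ and $\gamma_z''(1)=0$. I use the two equivalent expressions for $\gamma_z''(1)$ in Remark~\ref{gamma''} together with the pointwise bounds $(\min\{\ell_i\}-2)B(z)\le D(z)\le(\max\{m_i\}-2)B(z)$, which follow from $(\phi_3)$ (equivalently from \eqref{22}). From $\int_\Omega\big(D(z)+(2-(\alpha+\beta))B(z)-(q-(\alpha+\beta))K(z)\big)=0$ and $D(z)\le(\max\{m_i\}-2)B(z)$ one obtains $(\alpha+\beta-\max\{m_i\})\int_\Omega B(z)\le(\alpha+\beta-q)\int_\Omega K(z)$; since $\int_\Omega B(z)>0$ for $z\neq0$ and $\max\{m_i\}<\alpha+\beta$ by $(H)$, this forces $\int_\Omega K(z)>0$. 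Combining with Lemma~\ref{tec}(iii) (which gives $\int_\Omega K(z)\le R\|z\|^q$ with $R=R(\lambda+\mu)$ proportional to $\lambda+\mu$, in both ranges of $q$) and with $\int_\Omega B(z)\ge\min\{\ell_i\}\int_\Omega A(z)\ge c\min\{\|z\|^{\min\{\ell_i\}},\|z\|^{\max\{m_i\}}\}$ (from \eqref{22} and Proposition~\ref{estimativanorma}), I get, once $\lambda+\mu$ is small, both $\|z\|<1$ and $\|z\|\le C(\lambda+\mu)^{1/(\max\{m_i\}-q)}$, which tends to $0$ as $\lambda+\mu\to0^+$ (note $\max\{m_i\}>q$ by $(H)$). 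On the other hand, from $\int_\Omega\big(D(z)+(2-q)B(z)-(\alpha+\beta-q)Q(z)\big)=0$, $D(z)\ge(\min\{\ell_i\}-2)B(z)$ and Lemma~\ref{tec}(ii), one obtains $(\min\{\ell_i\}-q)\int_\Omega B(z)\le(\alpha+\beta-q)S^{\alpha+\beta}\|z\|^{\alpha+\beta}$; using $\|z\|<1\Rightarrow\int_\Omega B(z)\ge c\|z\|^{\max\{m_i\}}$ and $\min\{\ell_i\}>q$, this gives $\|z\|\ge\delta_0>0$ with $\delta_0$ \emph{independent of} $\lambda,\mu$ (since $\max\{m_i\}<\alpha+\beta$). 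Choosing $\eta_1>0$ with $C\eta_1^{1/(\max\{m_i\}-q)}<\delta_0$ makes these incompatible, so $\N^0=\emptyset$ for $0<\lambda+\mu<\eta_1$.

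For item (2), $\N=\N^+\cup\N^-\cup\N^0=\N^+\dot{\cup}\N^-$ by $\N^0=\emptyset$. When $q>1$ the functional $G$ above is $C^1$ on $W$ — this is exactly where $q>1$ enters, since then $|u|^{q-2}u$ defines a continuous Nemytskii operator, while $z\mapsto\int_\Omega B(z)$ is $C^1$ by $(\phi_3)$ — and $\N=G^{-1}(0)\setminus\{0\}$. For $z\in\N$, $\langle G'(z),z\rangle=J''(z)(z,z)+\langle J'(z),z\rangle$, whose second summand vanishes on $\N$, so it equals $\gamma_z''(1)\neq0$ because $\N^0=\emptyset$; thus $0$ is a regular value of $G|_{W\setminus\{0\}}$ and $\N$ is a $C^1$-manifold. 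For items (3)--(4): running the last estimate of the proof of (1) with $\gamma_z''(1)<0$ in place of $\gamma_z''(1)=0$ shows $\|z\|\ge\delta_0>0$ for every $z\in\N^-$, uniformly in $\lambda,\mu$. Hence if $z_n\in\N^-$ and $z_n\to z$ in $W$, then $z\neq0$, so $G(z)=0$ gives $z\in\N$, and $\gamma_{z_n}''(1)<0$ with continuity of $z\mapsto\gamma_z''(1)$ gives $\gamma_z''(1)\le0$, hence $<0$ by $\N^0=\emptyset$; so $z\in\N^-$ and $\N^-$ is closed. Similarly, if $z_n\in\N^+$ and $z_n\to z\neq0$ then $z\in\N^+$; this yields $\overline{\N^+}\subseteq\N^+\cup\{0\}$, which already gives that $\N^+\cup\{0\}$ is closed. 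That $0\in\overline{\N^+}$ (hence $\overline{\N^+}=\N^+\cup\{0\}$) is obtained by exhibiting a sequence in $\N^+$ of vanishing norm: for a direction $z_0\neq0$ with $\int_\Omega K(z_0)>0$ (which can be arranged thanks to $\lambda+\mu>0$ and $a^+\neq0$ or $c^+\neq0$, valid under $(B)$ as well as under $(C)$), the condition $q<\min\{\ell_i\}$ forces $\gamma_{z_0}$ to be strictly decreasing near $0^+$ and to possess a local minimizer, which lies in $\N^+$ by $\N^0=\emptyset$; a scaling and continuity analysis of the equation $\gamma_z'(t)=0$, letting the test function degenerate so that $\int_\Omega K$ becomes small relative to the growth of $\int_\Omega B$, then produces the required vanishing sequence.

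The main obstacle is the bookkeeping in item (1): the operator being non-homogeneous, each of $\int_\Omega A(z)$, $\int_\Omega B(z)$, $\int_\Omega Q(z)$ and $\int_\Omega K(z)$ scales with \emph{two} different exponents ($\min\{\ell_i\}$ and $\max\{m_i\}$) according to whether $\|z\|\le1$ or $\|z\|\ge1$, so one must first force the regime $\|z\|<1$ (using $R(\lambda+\mu)$ small) and then apply the \emph{same} regime consistently to extract the $\lambda,\mu$-independent lower bound; the inequalities coming from $(\phi_3)$/\eqref{22} and from Lemma~\ref{tec} are calibrated for exactly this, and hypothesis $(H)$ is invoked repeatedly to guarantee $q<\min\{\ell_i\}\le\max\{m_i\}<\alpha+\beta$. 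A secondary subtle point is the construction of the vanishing sequence in $\N^+$ needed for the precise identity $\overline{\N^+}=\N^+\cup\{0\}$; note that the statement actually used afterwards (closedness of $\N^+\cup\{0\}$, for Ekeland's principle) requires only the inclusion $\overline{\N^+}\subseteq\N^+\cup\{0\}$.
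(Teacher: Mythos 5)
Your proposal follows essentially the same architecture as the paper's proof: contradiction via the two inequalities that $\gamma_z''(1)=0$ forces on $\|z\|$, the regularity of $G(z)=\langle J'(z),z\rangle$ for item (2), and the $\|z\|\geq\delta_0$ lower bound on $\mathcal N^-$ from the first estimate of (1) for items (3)--(4). Your treatment of item (1) is actually slightly more careful than the paper's: you explicitly force the regime $\|z\|<1$ before choosing between $\min\{\ell_i\}$ and $\max\{m_i\}$, whereas the paper compresses the two cases into the symbol $\theta_i$. The one place where you deviate, and where your argument is incomplete, is the verification that $0\in\overline{\mathcal N^+}$ in item (3): you outline "a scaling and continuity analysis... letting the test function degenerate so that $\int_\Omega K$ becomes small relative to the growth of $\int_\Omega B$" but do not carry it out. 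The paper's construction is more concrete: it takes a weakly null sequence $z_n\rightharpoonup 0$ with $\|z_n\|=1$ (along which $\int_\Omega K(z_n)\to 0$ by the compact embedding while $\liminf\int_\Omega A(z_n)>0$ by the lack of strong convergence), projects it onto $\mathcal N^+$ via scalars $t_n$, and deduces $t_n\to 0$ from the inequality \eqref{t_n}. You correctly observe that only the inclusion $\overline{\mathcal N^+}\subseteq\mathcal N^+\cup\{0\}$ is needed for the closedness assertion and for Ekeland's principle downstream, so this gap is harmless for the main theorems, but it does leave the stated equality unproven as written.
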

	\begin{proof}
		First of all, we shall consider the proof for item (1). Arguing by contradiction we assume that $\N^0\neq\emptyset.$ Let $z\in\N^0$ be a fixed function. Clearly, we have $\gamma^\prime_z(1)=\gamma^{\prime\prime}_z(1)=0$. Now we mention that $\ell_i-2\leq \frac{\phi_i^\prime(t)t}{\phi_i(t)}$.
		Now taking into account $(\phi_3)$ ,\eqref{desQ} and the last assertion we obtain that
		\begin{eqnarray*}
			\Int ((\ell_1-q)\phi_1(|\nabla u| )|\nabla u|^2+(\ell_2-q)\phi_2(|\nabla v|)|\nabla v|^2)&\leq& \Int D(z)+(2-q)B(z)\nonumber\\
			&\leq&(\alpha+\beta-q)\Int Q(z)\nonumber\\
			&\leq& (\alpha+\beta-q)S^{\alpha+\beta}||z||^{\alpha+\beta}.
		\end{eqnarray*}
		
		On the other hand, using the Proposition \ref{estimativanorma}, there exists $A_1>0$, such that\\
		
		\begin{eqnarray}\label{est-theta}
		\Int ((\ell_1-q)\phi_1(|\nabla u| )|\nabla u|^2+(\ell_2-q)\phi_2(|\nabla v|)|\nabla v|^2) &\geq& \Int \ell_1(\ell_1-q)\Phi_1(|\nabla u|)+\ell_2(\ell_2-q)\Phi_2(|\nabla v|) \nonumber\\
		&\geq& A_1\min\{\ell_i(\ell_i-q)\}\min\{||z||^{\min\{\ell_i\}},||z||^{\max\{m_i\}}\}\nonumber\\
		&=& A_1\min\{\ell_i(\ell_i-q)\}||z||^{\theta_i}.
		\end{eqnarray}
		
		At this stage, using the estimates just above, we infer that
		
		$$A_1\min\{\ell_i(\ell_i-q)\}||z||^{\theta_i}\leq(\alpha+\beta-q)S^{\alpha+\mu}||z||^{\alpha+\beta}.$$
		Hence we know that
		$$||z||^{\alpha+\beta}\geq \Fr{A_1\min\{\ell_i(\ell_i-q)\}}{(\alpha+\beta-q)S^{\alpha+\beta}}||z||^{\theta_i}.
		$$
		These facts imply that
		\begin{equation}\label{des1}
		||z||\geq \left[\Fr{A_1\min\{\ell_i(\ell_i-q)}{(\alpha+\beta-q)S^{\alpha+\beta}}
		\right]^{\frac{1}{\alpha+\beta-\theta_i}}.
		\end{equation}
		
		On the other hand, using the hypothesis $(\phi_3)$,  \eqref{eq5} and taking into account either \eqref{desK-1} or \eqref{desK-2} we obtain
		$$\Int ((\alpha+\beta-m_1)\phi_1(|\nabla u| )|\nabla u|^2+(\alpha+\beta-m_2)\phi_2(|\nabla u| )|\nabla u|^2)\leq(\lambda+\mu)(\alpha+\beta-q)R||z||^{q},$$
		where $R$ is given by Lemma \ref{tec}-(iii).

		Using the same ideas discussed in \eqref{est-theta} we mention that
		$$A_1\min\{\ell_i(\alpha+\beta-m_i)\}\min\{||z||^{\min\{\ell_i\}},||z||^{\max\{m_i\}}\}\leq (\lambda+\mu)(\alpha+\beta-q)R||z||^{q}.$$
		Hence the last assertion says that
		$$\Fr{A_1\min\{\ell_i(\alpha+\beta-m_i)\}}{(\alpha+\beta-q)R}||z||^{\theta_i}
		\leq (\lambda+\mu)||z||^{q}.$$
		In this way, we observe that
		\begin{equation}\label{des2}
		\left[\Fr{A_1\min\{\ell_i(\alpha+\beta-m_i)}{(\alpha+\beta-q)R}\right]||z||^{\theta_i-q}\leq (\lambda+\mu).
		\end{equation}
		Under these conditions, using \eqref{des1} and \eqref{des2}, we get a contradiction for any
		\begin{equation}\label{eta1}
		(\lambda+\mu)<\min_{i=1,2}\left\{ \left[\Fr{A_1\min\{\ell_i(\ell_i-q)}{(\alpha+\beta-q)S^{\alpha+\beta}}\right]^{\frac{\theta_i-q}{\alpha+\beta-\theta_i}}\left[\Fr{A_1\min\{\ell_i(\alpha+\beta-m_i)}{(\el-q)R}\right]\right\}=:\eta_1.
		\end{equation}
		This finishes the proof for the item $(1)$.
		
		Now we shall prove the item $(2)$. Without any loss of generality, we take $z\in \N^+$. Define the function $G(z):=\left<J^\prime(z),z\right>, z \in \w$.  It is no hard to see that
		$$G^\prime(z)=J^{\prime\prime}(z)\cdot(z,z)+\left<J^\prime(z),z\right>=\gamma^{\prime\prime}_z(1)>0,\,\,\ \mbox{ for any } z\in\N^+.$$
		Hence, $0$ is a regular value for the functional $G$. As a consequence we see also that $\N^+$ is a $C^1$-manifold. Similarly, we can consider the proof for $\N^-$ proving that it is also a $C^1$-manifold. Therefore the proof of item $(2)$ follows due the fact that $\N^{0}=\emptyset$ for any $\lambda$ and $\mu$ in such way that $0 < \lambda + \mu$ is small enough. This completes the proof for the item $(2)$.
		
		At this moment we shall prove the item $(3)$. Since $\mathcal{N}^{0}_{\lambda,\mu}$ is empty the proof for the nonsingular case or singular case are the same.
		Let $(z_n)\subset \N^+$ be a sequence satisfying $z_n\rightarrow z$ in $W$. It is no hard to see that
		$$\lim_{n\rightarrow\infty} \gamma^\prime_{z_n}(1)=\gamma^\prime_{z}(1)=0.$$
		Hence $z \neq 0$ showing that $z\in \N$ or $z \equiv 0$. Assuming that $z \neq 0$ we obtain
		$\displaystyle\lim_{n\rightarrow\infty} \gamma^{\prime\prime}_{z_n}(1) = \gamma^{\prime\prime}_{z}(1)\geq 0$. As a consequence
		we know that $z\in \N^+$ or $z\equiv 0$. The last assertion ensures that $z\in \N^+\cup \{0\}$ proving that $\overline{\N^+} \subset \N^+\cup \{0\}$.
		
		On the other hand, we observe that $0\in\overline{\N^+}$. In fact, we mention that $J(z)\leq 0$ for any  $z\in \N^+$, see Proposition \ref{fib} ahead. Taking into account \eqref{eq2} and $(\phi_3)$ we also mention that
		\begin{eqnarray}\label{Jmenor0}
		\left(1-\min\left\{\frac{1}{\ell_i}\right\}\Fr{1}{\alpha+\beta}\right)\Int A(z)\leq \Int A(z) - \Fr{1}{\alpha+\beta} B(z)\leq \left(\Fr{1}{q}-\Fr{1}{\alpha +\beta}\right)\Int K(z).
		\end{eqnarray}
		Since $W$ is a reflexive Banach space, there exists $(z_n)\subset W$ in such way that $z_n\rightharpoonup 0$ in $W$ and $\|z_n\|=1$ for any $n\in \mathbb{N}$. Obviously, we obtain that $z_n \not\rightarrow 0$ in $W$. As a consequence we know that
		$$\liminf_{n\rightarrow\infty}\Int A(z_n)>0.$$
		Moreover, there exists $(t_n)\subset \mathbb{R}$ such that $t_nz_n\in \N^+$, see Proposition \ref{m_u-comp} ahead. Now, using \eqref{Jmenor0}, we infer that
		\begin{eqnarray}
		\min_i\{t_n^{\ell_i},t_n^{m_i}\}\left(1-\min\left\{\frac{1}{\ell_i}\right\}\Fr{1}{\alpha+\beta}\right)\Int A(z_n)&\leq& \left(1-\min\left\{\frac{1}{\ell_i}\right\}\Fr{1}{\alpha+\beta}\right)\Int A(t_nz_n)\nonumber\\
		&\leq&  t_n^q\left(\Fr{1}{q}-\Fr{1}{\alpha +\beta}\right)\Int K(z_n).\nonumber
		\end{eqnarray}
		Hence the last assertion implies that
		\begin{eqnarray}\label{t_n}
		\min_i\{t_n^{\ell_i-q},t_n^{m_i-q}\}\left(1-\min\left\{\frac{1}{\ell_i}\right\}\Fr{1}{\alpha+\beta}\right)\Int A(z_n)\leq\left(\Fr{1}{q}-\Fr{1}{\alpha +\beta}\right)\Int K(z_n).
		\end{eqnarray}
		Using the compact embedding $W \subset L^{q}(\Omega) \times L^{q}(\Omega)$ and \eqref{t_n} we deduce that $t_n\rightarrow 0$. Therefore, we obtain a sequence $(t_nz_n)\subset \N^+$ which satisfies $t_nz_n\rightarrow 0$ in $W$. As a consequence we obtain that $\overline{\N^+} = \N^+\cup \{0\}$. This ends the proof of item (3).

		Now we shall prove the item (4). Let $(z_n)\subset \N^-$ be a sequence satisfying $z_n\rightarrow z$ in $W$. It is no hard to see that
		$$\lim_{n\rightarrow\infty} \gamma^\prime_{z_n}(1) = \gamma^\prime_{z}(1)=0.$$
		Hence $z \neq 0$ showing that $z\in \N$ or $z \equiv 0$.
		Using the fact that $z_n\in \N^-$ and using the same ideas employed in \eqref{des1} there exists
		$C> 0$ in such way that
		\begin{equation*}
		C\leq \|z_n\|^{\alpha+\beta}.
		\end{equation*}
		Using the strong convergence we know that $C\leq \|z\|^{\alpha+\beta}$. As a consequence $z \neq 0$ which implies that
		$\lim_{n\rightarrow\infty} \gamma^{\prime\prime}_{z_n}(1) = \gamma^{\prime\prime}_{z}(1)\leq 0$. Since $z \neq 0$ the last assertion says that
		$z\in \N^-$. Thus $\N^-$ is a closed set proving the desired result. This ends the proof of Proposition \ref{c1}.
	\end{proof}

	Now we shall prove an auxiliary result using the Implicit Function Theorem in order to ensure the existence of a curve in the Nehari manifold.
	For related results we infer the reader to Yijing \cite{yijing2008}.
	
	\begin{lem}\label{proj}
		Given $(u,v)\in \N^-~(\N^+)$ there exist $\epsilon >0$ and a continuous function $f:B_\epsilon\rightarrow (0,\infty)$, where
		$B_\epsilon :=\{(w_1,w_2)\in W:\|(w_1,w_2)\|<\epsilon\}$, in such way that
		$$f(0,0)=1,~~f(w_1,w_2)(u+w_1,v+w_2)\in \N^-~~(\N^+),~~(w_1,w_2)\in B_\epsilon.$$
	\end{lem}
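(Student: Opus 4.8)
The plan is to write the membership condition $t(z+w)\in\N$ as a scalar equation in $t$ and apply the Implicit Function Theorem. Fix $z=(u,v)\in\N^-$; the case $z\in\N^+$ is identical with one inequality reversed. For $w=(w_1,w_2)\in W$ and $t>0$ put
\[
F(w,t):=\gamma^\prime_{z+w}(t)=\Int t^{-1}B(t(z+w))-t^{q-1}K(z+w)-t^{\alpha+\beta-1}Q(z+w).
\]
Because $z\in\N$ we have $F(0,1)=\gamma^\prime_z(1)=0$, and because $z\in\N^-$ we have $\partial_t F(0,1)=\gamma^{\prime\prime}_z(1)<0$; in particular $\partial_t F(0,1)\neq 0$.

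Next I would check the regularity needed to run the Implicit Function Theorem. The maps $\zeta\mapsto\Int B(\zeta)$ and $\zeta\mapsto\Int D(\zeta)$ are continuous on $W$ thanks to $(\phi_3)$ and the $\Delta_2$-type bounds \eqref{22} (which make the Nemytskii operators built from $\phi_i$ and $\phi_i^\prime$ act continuously between the relevant Orlicz spaces), while $\zeta\mapsto K(\zeta)$ and $\zeta\mapsto Q(\zeta)$ are continuous on $W$ by the Sobolev embeddings and $(A)$--$(B)$ (resp. $(C)$). Since $(w,t)\mapsto t(z+w)$ is continuous into $W$, it follows that $F$ is continuous near $(0,1)$, that $t\mapsto F(w,t)$ is $C^1$ on $(0,\infty)$ with $\partial_t F(w,t)=\gamma^{\prime\prime}_{z+w}(t)$, and that $(w,t)\mapsto\partial_t F(w,t)$ is continuous for $t$ in compact subsets of $(0,\infty)$. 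The Implicit Function Theorem, in the form valid for maps that are continuous and continuously differentiable in the last variable with nonvanishing partial derivative, then provides $\epsilon>0$ and a continuous function $f\colon B_\epsilon\to(0,\infty)$ with $f(0,0)=1$ and $F(w,f(w))=0$ for all $w\in B_\epsilon$ (shrink $\epsilon$ so that $f>0$ stays valid). By the definition of $F$ this says exactly that $f(w)(z+w)\in\N$ for every $w\in B_\epsilon$.

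Finally I would upgrade $\N$ to $\N^-$. By Lemma \ref{c1}(1), $\N^0=\emptyset$ whenever $0<\lambda+\mu<\eta_1$, so $\N=\N^+\dot{\cup}\N^-$ and it suffices to track the sign of $\gamma^{\prime\prime}_{f(w)(z+w)}(1)$. Using \eqref{eq5}, this number is a continuous expression in $\Int D(\cdot)$, $\Int B(\cdot)$, $Q(\cdot)$ and $K(\cdot)$ evaluated at $f(w)(z+w)$, hence depends continuously on $w\in B_\epsilon$ by continuity of $f$ and of those functionals; at $w=0$ it equals $\gamma^{\prime\prime}_z(1)<0$. Shrinking $\epsilon$ once more we obtain $\gamma^{\prime\prime}_{f(w)(z+w)}(1)<0$ on $B_\epsilon$, i.e. $f(w)(z+w)\in\N^-$. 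For $z\in\N^+$ one repeats the argument with all strict inequalities reversed, producing a curve into $\N^+$.

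The only genuinely delicate point is the joint continuity of $F$ and $\partial_t F$ near $(0,1)$: one must verify that the nonhomogeneous terms $\Int B(t(z+w))$ and $\Int D(t(z+w))$ vary continuously with $w$ — which is exactly where $(\phi_3)$ and \eqref{22} enter — and, in the singular regime $0<q<1$, that the failure of classical differentiability of $w\mapsto|u+w_1|^q$ is harmless, since $\partial_t$ differentiates only the explicit factor $t^{q-1}$ and never the field. Granting this, the statement is a direct consequence of the scalar Implicit Function Theorem together with the fact that $\N^0=\emptyset$.
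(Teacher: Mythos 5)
Your argument is essentially the same as the paper's: the paper defines $F(t,w_1,w_2)=t^{-q}\int_\Omega B(t(u+w_1,v+w_2))-t^{\alpha+\beta-q}\int_\Omega Q(u+w_1,v+w_2)-\int_\Omega K(u+w_1,v+w_2)$, which is exactly $t^{1-q}\gamma'_{z+w}(t)$ (your $F$ up to the smooth nonvanishing factor $t^{1-q}$), observes that $\partial_t F(1,0,0)=\gamma_z''(1)<0$, and invokes the Implicit Function Theorem for continuous functions from Drabek--Milota. You are somewhat more careful than the printed proof in two respects — you spell out the continuity checks and you explicitly show how to pass from $f(w)(z+w)\in\N$ to $f(w)(z+w)\in\N^-$ via the continuity of $\gamma''_{\cdot}(1)$ and $\N^0=\emptyset$ — but both are implicit in the paper's appeal to the IFT and to Lemma \ref{c1}, so this is the same proof.
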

	\begin{proof}
		Define $F:\mathbb{R}\times W\rightarrow \mathbb{R}$ given by
		\begin{eqnarray*}
			F(t,w_1,w_2):=t^{-q}\Int B(t(u+w_1,v+w_2))
			- t^{\alpha+\beta-q}Q(u+w_1,v+w_2)-K(u+w_1,v+w_2).
		\end{eqnarray*}
		It is easy to see that
		\begin{eqnarray}
		\frac{\partial F}{\partial t}(1,0,0):= \int_{\Omega}(2-q)B(z)	+D(z)-(\alpha+\beta-q)Q(z)=\gamma_z^{\prime\prime}(1).\nonumber
		\end{eqnarray}
		Now we shall consider the proof of this proposition assuming that $(u,v)\in \N^-$. The proof for the Nehari manifold $(u,v)\in \N^+$ follows arguing in the same way. Using the fact that $(u,v)\in \N^-$ we obtain $\frac{\partial F}{\partial t}(1,0,0)<0$.  As a consequence, applying the Implicit Function Theorem for continuous functions, see for instance \cite[Remark 4.2.3]{Drabek0}, there exists $\epsilon >0$ and a continuous function $f:B_\epsilon \rightarrow (0,\infty)$ in such way that
		$$f(0,0)=1,~~f(w_1,w_2)(u+w_1,v+w_2)\in \N^-,~~w\in B_\epsilon.$$
		This finishes the proof.
	\end{proof}

	\section{Analysis of the Fibering Maps}\label{fib-map}
	
	In this section we give a complete description on the geometry for the fibering maps associated
	to the quasilinear elliptic System \eqref{eq1}. To the best of our knowledge, given $z\in W\setminus\{0\}$, the essential nature of fibering maps is
	determined taking into account the signs for the integrals $\Int K(z) \,\, \mbox{and} \,\, \Int Q(z).$

	Throughout this section is useful to consider the auxiliary functions $m_{z},\overline m_z: (0, \infty) \rightarrow \mathbb{R}$ of $C^{1}$ class defined by
	$$m_z(t)=t^{-q}\Int (B(tz)- Q(tz)) \qquad\mbox{and}\qquad \overline m_z(t)=t^{-\alpha-\beta}\Int (B(tz)- K(tz)), t > 0, z \in W\setminus\{0\}.$$
	Clearly, we see that
	\begin{equation*}
	\begin{array}{ccc}
	m^\prime_z(t) &=& t^{-(q+1)}\Int (2-q)B(tz)+D(tz)-(\alpha+\beta-q)Q(tz) \qquad\mbox{and}\\
	\overline m^\prime_z(t)&=& t^{-(\alpha+\beta+1)}\Int B^\prime(tz)tz-(\alpha+\beta)B(tz)-(q-\alpha-\beta)t^qK(z),
	\end{array}
	\end{equation*}
	where $t > 0$ and $z \in W\setminus\{0\}$.
	
	Now we shall consider a result comparing points $t z \in \N$ with the function $m_{z}$ and $\overline{m}_z$ . More precisely, we have the following interesting result
	\begin{lem}\label{m_uegamma_u}
		Let $t>0$ be fixed. Then $tz\in \N$ if and only if $t > 0$ is a solution for the following equations
		\begin{equation*}
		m_z(t)=\Int K(z)\qquad{or}\qquad\overline m_z(t)=\Int Q(z).
		\end{equation*}
	\end{lem}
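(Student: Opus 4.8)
The plan is to reduce the statement to a direct algebraic manipulation of the Nehari condition. Recall that $tz \in \mathcal{N}_{\lambda,\mu}$ means, by definition \eqref{nehari} applied to $tz$, that
\begin{equation*}
\Int B(tz) = \Int K(tz) + Q(tz).
\end{equation*}
Since $K$ is homogeneous of degree $q$ in its argument, namely $K(tz) = t^{q} K(z)$, and $Q$ is homogeneous of degree $\alpha+\beta$, namely $Q(tz) = t^{\alpha+\beta} Q(z)$, the Nehari identity for $tz$ reads
\begin{equation*}
\Int B(tz) = t^{q}\Int K(z) + t^{\alpha+\beta}\Int Q(z).
\end{equation*}

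First I would establish the equivalence with the equation $m_z(t) = \Int K(z)$. Starting from the displayed identity above, subtract $t^{\alpha+\beta}\Int Q(z)$ from both sides to get $\Int B(tz) - t^{\alpha+\beta}\Int Q(z) = t^{q}\Int K(z)$; since $Q(tz) = t^{\alpha+\beta}Q(z)$ this is exactly $\Int\bigl(B(tz) - Q(tz)\bigr) = t^{q}\Int K(z)$. Multiplying both sides by $t^{-q} > 0$ (legitimate since $t > 0$) gives $t^{-q}\Int\bigl(B(tz) - Q(tz)\bigr) = \Int K(z)$, i.e. $m_z(t) = \Int K(z)$ by the definition of $m_z$. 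Every step is reversible, so this is a genuine equivalence. The equivalence with $\overline{m}_z(t) = \Int Q(z)$ is obtained in exactly the same way: from the Nehari identity, subtract $t^{q}\Int K(z) = \Int K(tz)$ from both sides and multiply by $t^{-(\alpha+\beta)} > 0$, obtaining $t^{-(\alpha+\beta)}\Int\bigl(B(tz) - K(tz)\bigr) = \Int Q(z)$, which is $\overline{m}_z(t) = \Int Q(z)$.

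There is essentially no obstacle here; the only point requiring a word of care is the homogeneity of $K$ and $Q$ in the scaling variable (immediate from their explicit polynomial form $K(z) = \lambda a(x)|u|^{q} + \mu c(x)|v|^{q}$ and $Q(z) = b(x)|u|^{\alpha}|v|^{\beta}$), together with the fact that $t > 0$ ensures $t^{-q}$ and $t^{-(\alpha+\beta)}$ are well-defined and nonzero so that the multiplications do not change the solution set. I would present the argument as the two short reversible chains above, noting that both reformulations come from the single identity $\Int B(tz) = t^{q}\Int K(z) + t^{\alpha+\beta}\Int Q(z)$ by isolating one of the two terms on the right.
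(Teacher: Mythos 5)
Your argument is correct, and since the paper gives no proof for this lemma (it is stated as an immediate consequence of the definitions), your reversible-chain reduction is essentially the intended justification. The key observations you use — homogeneity $K(tz)=t^{q}K(z)$, $Q(tz)=t^{\alpha+\beta}Q(z)$, and the reformulation of $tz\in\mathcal{N}_{\lambda,\mu}$ as $\int_\Omega B(tz)=t^{q}\int_\Omega K(z)+t^{\alpha+\beta}\int_\Omega Q(z)$ (which is exactly $t\,\gamma_z'(t)=0$) — are precisely what underlies the paper's definitions of $m_z$ and $\overline m_z$, so there is nothing substantive to add; the proof is a clean and complete rendering of the routine computation. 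One tiny cosmetic point: for full rigor you might note explicitly that $tz\neq 0$ (since $z\neq 0$ and $t>0$), so membership in $\mathcal{N}_{\lambda,\mu}$ reduces to the integral identity alone.
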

	
	The next lemma is a powerful tool in order to get a precise information about the function $m_{z}$ and the fibering maps. More specifically, we shall consider the following result

	\begin{lem}\label{m_u-comp}
		\begin{enumerate} \item Suppose that $\Int Q(z)\leq 0$ holds. Then we obtain $\displaystyle m_{z}(0) := \lim_{t \rightarrow 0} m_{z}(t)=0 , m_{z}(\infty) :=  \lim_{t \rightarrow \infty} m_{z}(t) = \infty$ and $m^\prime_z(t)>0$ for any $t>0$.
			\item Suppose $\Int Q(z)>0$ and $(H)$. Then
			there exists an unique critical point for $m_{z}$, i.e, there is an unique point $\tilde{t}>0$ in such way that $m^\prime_z(\tilde{t})=0$.
			Furthermore, we know that $\tilde{t} > 0$ is a global maximum point for $m_{z}$ and $m_{z}(\infty) = - \infty$.
			\item Suppose $\Int K(z)>0$ $(H)$. Then
			there exists an unique critical point for $\overline m_{z}$, i.e, there is an unique point $\overline{t}>0$ in such way that $\overline m^\prime_z(\overline{t})=0$.
			Furthermore, we know that $\overline{t} > 0$ is a global maximum point for $\overline m_{z}$, $\overline m_{z}(0) = -\infty$, $\overline m_{z}(\infty) = 0$.
		\end{enumerate}
	\end{lem}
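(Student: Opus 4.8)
The plan is to push everything through the single scalar function $\zeta(t):=\int_{\Omega}B(tz)$, $t>0$. Using that $K$ is $q$--homogeneous and $Q$ is $(\alpha+\beta)$--homogeneous in $z$, so $\int_{\Omega}K(tz)=t^{q}\int_{\Omega}K(z)$ and $\int_{\Omega}Q(tz)=t^{\alpha+\beta}\int_{\Omega}Q(z)$, I would first record
$$m_z(t)=t^{-q}\zeta(t)-t^{\alpha+\beta-q}\int_{\Omega}Q(z),\qquad \overline m_z(t)=t^{-(\alpha+\beta)}\zeta(t)-t^{q-(\alpha+\beta)}\int_{\Omega}K(z),$$
with $\zeta(t)>0$ for every $t>0$ since $z\neq 0$. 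From Lemma~\ref{lemA}(a) one has $\min\{t^{\min\{\ell_i\}},t^{\max\{m_i\}}\}\,\zeta(1)\le \zeta(t)\le \max\{t^{\min\{\ell_i\}},t^{\max\{m_i\}}\}\,\zeta(1)$, and a direct differentiation gives $t\,\zeta'(t)=\int_{\Omega}(D(tz)+2B(tz))=\int_{\Omega}B'(tz)tz$; combining the latter with the pointwise inequalities $(\ell_i-2)\phi_i(s)s^{2}\le\phi_i'(s)s^{3}\le(m_i-2)\phi_i(s)s^{2}$ coming from \eqref{22} yields the key differential inequality
$$\min\{\ell_i\}\,\zeta(t)\ \le\ t\,\zeta'(t)\ \le\ \max\{m_i\}\,\zeta(t),\qquad t>0.$$
Hence $\eta(t):=t^{-q}\zeta(t)$ is strictly increasing (as $q<\min\{\ell_i\}$) with $\eta(0^{+})=0$, $\eta(\infty)=\infty$, and $h(t):=t^{-(\alpha+\beta)}\zeta(t)$ is strictly decreasing (as $\max\{m_i\}<\alpha+\beta$ by $(H)$) with $h(0^{+})=\infty$, $h(\infty)=0$; both are bijections of $(0,\infty)$ onto itself.

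Item (1) is then immediate: when $\int_{\Omega}Q(z)\le 0$ the term $-t^{\alpha+\beta-q}\int_{\Omega}Q(z)$ is nonnegative, so $m_z(0)=0$ and $m_z(\infty)=\infty$ follow from the two--sided bound on $\zeta$ and $q<\min\{\ell_i\}\le\max\{m_i\}<\alpha+\beta$; and in $t^{q+1}m_z'(t)=\int_{\Omega}\big((2-q)B(tz)+D(tz)\big)-(\alpha+\beta-q)t^{\alpha+\beta}\int_{\Omega}Q(z)$ the last term is $\ge 0$ while each summand of the integrand obeys $(2-q)\phi_i(s)s^{2}+\phi_i'(s)s^{3}\ge(\ell_i-q)\phi_i(s)s^{2}>0$, whence $m_z'>0$. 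For the existence/location parts of items (2) and (3) I would write $m_z(t)=t^{\alpha+\beta-q}\big(h(t)-\int_{\Omega}Q(z)\big)$ and $\overline m_z(t)=t^{q-(\alpha+\beta)}\big(\eta(t)-\int_{\Omega}K(z)\big)$: since $h$ is a decreasing and $\eta$ an increasing bijection of $(0,\infty)$, each of $m_z,\overline m_z$ vanishes at exactly one point, with $m_z>0$ before it and $<0$ after, $\overline m_z<0$ before and $>0$ after; together with $m_z(0^{+})=0$, $m_z(\infty)=-\infty$ (from $m_z(t)\le t^{\alpha+\beta-q}(t^{\max\{m_i\}-(\alpha+\beta)}\zeta(1)-\int_{\Omega}Q(z))$) and $\overline m_z(0^{+})=-\infty$, $\overline m_z(\infty)=0$, this yields a global maximum of $m_z$ attained at an interior point $\tilde t$ lying below the zero of $m_z$, and a global maximum of $\overline m_z$ attained at an interior point $\overline t$ lying above the zero of $\overline m_z$.

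The delicate point is uniqueness of the critical point, which I would get by proving $m_z''<0$ (resp.\ $\overline m_z''<0$) at every critical point --- then each critical point is a strict local maximum and a second one would force an intermediate local minimum, a contradiction. Writing $G(t):=t^{q+1}m_z'(t)$ one has $m_z''(t_*)=t_*^{-q-1}G'(t_*)$ at a zero $t_*$ of $G$; differentiating $G$, using again $t\,\zeta'(t)=\int_{\Omega}(D(tz)+2B(tz))$ together with $\frac{d}{dt}\int_{\Omega}D(tz)=t^{-1}\int_{\Omega}(F(tz)+3D(tz))$, where $F(z):=\phi_1''(|\nabla u|)|\nabla u|^{4}+\phi_2''(|\nabla v|)|\nabla v|^{4}$, and then substituting $G(t_*)=0$ to eliminate $\int_{\Omega}Q(z)$, everything collapses to
$$t_*\,G'(t_*)=\int_{\Omega}\Big(F(t_*z)+\big(5-q-(\alpha+\beta)\big)D(t_*z)+(2-q)\big(2-(\alpha+\beta)\big)B(t_*z)\Big);$$
the analogous computation for $\overline m_z$, now eliminating $\int_{\Omega}K(z)$, produces the very same expression. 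Since the $\nabla u$--summand of the right--hand side equals $s^{2}\big[s^{2}\phi_1''(s)+(5-q-(\alpha+\beta))s\phi_1'(s)+(2-q)(2-(\alpha+\beta))\phi_1(s)\big]$ with $s=t_*|\nabla u|$ (and similarly for $\nabla v$), it suffices to prove the scalar inequality
$$s^{2}\phi_i''(s)+\big(5-q-(\alpha+\beta)\big)s\phi_i'(s)+(2-q)\big(2-(\alpha+\beta)\big)\phi_i(s)<0,\qquad s>0,\ i=1,2.$$
Putting $a=\phi_i(s)>0$, $b=s\phi_i'(s)$, $c=s^{2}\phi_i''(s)$, hypothesis $(\phi_3)$ gives $c\le(m_i-2)a+(m_i-4)b$ and \eqref{22} gives $(\ell_i-2)a\le b\le(m_i-2)a$, so the left--hand side is $\le$ an affine function of $b$ on $[(\ell_i-2)a,(m_i-2)a]$; at the worst endpoint one is left either with $(m_i-q)(m_i-(\alpha+\beta))<0$ (automatic, since $q<m_i<\alpha+\beta$) in the case $m_i+1-q-(\alpha+\beta)\ge 0$, or with $q<\frac{\ell_i(\alpha+\beta-1)-m_i(\ell_i-1)}{\alpha+\beta-\ell_i}$ in the case $m_i+1-q-(\alpha+\beta)<0$, and this last inequality is exactly what $(H)$ supplies --- it is implied by $0<q<\frac{(\alpha+\beta-1)\min\{\ell_i\}-\max\{m_i(m_i-1)\}}{\alpha+\beta-\min\{\ell_i\}}$ via $m_i(\ell_i-1)\le m_i(m_i-1)$, $\min\{\ell_i\}\le\ell_i$ and $\alpha+\beta-\ell_i\le\alpha+\beta-\min\{\ell_i\}$. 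Hence $m_z''(t_*)<0$, $\overline m_z''(t_*)<0$, and uniqueness follows.

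The only genuine obstacle is this last step: descending from the abstract statement "$m_z''<0$ at a critical point" to the scalar inequality above, and verifying that the constants in $(H)$ --- in particular the term $\max\{m_i(m_i-1)\}$ --- are precisely calibrated to its worst case. Everything else, namely the monotonicity of $h$ and $\eta$, the boundary limits, and item (1), is routine once the estimate $\min\{\ell_i\}\,\zeta\le t\,\zeta'\le\max\{m_i\}\,\zeta$ is in hand.
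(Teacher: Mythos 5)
Your argument is correct, and while the high--level structure matches the paper's, the mechanism you use for uniqueness is genuinely (if mildly) different. The paper fixes the auxiliary function $\eta_z(t)=t^{-(\alpha+\beta)}\int_\Omega[(2-q)B(tz)+D(tz)]$, rewrites $m_z'(t)=0$ as $\eta_z(t)=(\alpha+\beta-q)\int_\Omega Q(z)$, and then proves $\eta_z$ is strictly decreasing on all of $(0,\infty)$ (similarly with $\overline\eta_z$ increasing for $\overline m_z$); uniqueness is then a level--set count. You instead prove the \emph{sign of the second derivative at a critical point}: writing $G(t)=t^{q+1}m_z'(t)$, you compute $t_*G'(t_*)$ after eliminating $\int_\Omega Q(z)$ via $G(t_*)=0$, observe that the result is independent of $Q$ (and that the same expression appears when you eliminate $\int_\Omega K(z)$ for $\overline m_z$), and show it is negative. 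This makes every critical point a strict local maximum, and two strict local maxima would force an intermediate critical minimum, giving uniqueness. The two routes reduce to exactly the same scalar pointwise inequality in $s,\phi_i(s),s\phi_i'(s),s^2\phi_i''(s)$, bounded via Remark~\ref{conseqphi3} and $(H)$; in fact the paper's displayed identity for $\eta_z'$ is, up to a positive multiple and the elimination step, your $t_*G'(t_*)$. One thing you do more carefully than the printed proof: you treat both signs of the coefficient $m_i+1-q-(\alpha+\beta)$ of the $D$--term when optimizing the affine bound in $b=s\phi_i'(s)$ over $[(\ell_i-2)a,(m_i-2)a]$, whereas the paper only invokes the lower endpoint $(\ell_i-2)$ (which is the correct direction only when that coefficient is negative), so your case split is a useful repair. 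A small stylistic gain in your write--up is unifying items (2) and (3) through the single function $\zeta(t)=\int_\Omega B(tz)$ and its logarithmic--derivative bound $\min\{\ell_i\}\le t\zeta'(t)/\zeta(t)\le\max\{m_i\}$, which makes the boundary limits and sign pattern immediate.
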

	\begin{proof}
		Initially, we shall prove the item $(1)$. The estimate \eqref{22} implies  that
		\begin{equation}\label{ee21}
		\min\{\ell_i-2\}(\phi_1(t)+\phi_2(t))\leq \phi_1^\prime(t)t+\phi_2^\prime(t)t\leq\\max\{m_i-2\}(\phi_1(t)+\phi_2(t)).
		\end{equation}
		As a consequence we see that
		$$
		\begin{array}{rcl}
		m^\prime_z(t)&\geq& t^{-(q+1)} \Int\min(\ell_i-q)B(tz)-(\alpha+\beta-q)Q(tz)>0.
		\end{array}
		$$
		Hence the function $m_z$ is increasing for any $t>0,$ i.e, we have $m^{\prime}_{z}(t) > 0$ for any $t > 0$.
		Moreover, we shall prove that $m_{z}(0) = 0$. In fact, using Lemma \ref{lemA},
		we deduce that
		\begin{equation}\label{ae}
		\Int t^{\max\{m_i\}-q} B(z)-t^{(\alpha+\beta)-q} Q(z)\leq m_z(t)\leq \Int t^{\min\{\ell_i\}-q} B(z)-t^{(\alpha+\beta)-q}Q(z),
		\end{equation}
		where $ t \in [0, 1].$ Taking the limit in estimate \eqref{ae} we get $\displaystyle \lim_{t \rightarrow 0^+} m_{z}(t) = 0$.
		Furthermore, we also mention that Lemma \ref{lemA} implies that
		\begin{equation}
		m_{z}(t) \geq   \int_{\Omega} t^{\min\{\ell_i\}-q}B(z) - t^{\alpha+\beta-q} Q(z),\  t \geq 1.\nonumber
		\end{equation}
		Due the fact that $\ell_i > q $ the last assertion implies that $m_{z}(\infty) =  \infty$.
		This finishes the proof of item $(1)$.
		
		Now we shall prove the item $(2)$.  As first step we mention that $m_z$ is increasing for $ t > 0$ small enough and $\ds\lim_{t\to\infty}m_z(t)=-\infty$. More specifically, for $0<t<1$ and using one more time \eqref{ee21} and Lemma \ref{lemA} we get
		$$\begin{array}{rcl}
		m^\prime_z(t)&\geq&
		\Int \min(\ell_i-q)t^{-(q+1)} B(tz)-(\alpha+\beta-q)t^{-(q+1)}Q(tz)\\[2ex]
		&\geq & \dfrac{1}{t}\Int\min(\ell_i-q)t^{\max\{m_i\}-q} B(z)-(\alpha+\beta-q)t^{\alpha+\beta-q} Q(z).
		\end{array}$$
		Since $m_i<\alpha+\beta,~i=1,2,$ we mention that $m^\prime_z(t)>0$ for any $t>0$ small enough. Furthermore, arguing as above we see also that
		if $t>1$, $$m_z(t)\leq \Int t^{\max\{m_i\}-q}B(z)-t^{\alpha+\beta-q}Q(z).$$
		Therefore, we have $\ds\lim_{t\to\infty}m_z(t)=-\infty$ where was used the fact that $m_i<\alpha+\beta,~i=1,2$.
		
		Now the main goal in this proof is to show that $m_z$ has an unique critical point $\tilde{t}>0.$ Note that, we have $m^{\prime}_z(t)=0$ if and only if $$(2-q)t^{-(\alpha+\beta)}\Int B(tz)+t^{-(\alpha+\beta)}\Int D(tz)=(\alpha+\beta-q)\Int Q(z).$$
		Define the auxiliary function $\eta_{z}: (0,\infty) \rightarrow \mathbb{R}$ given by
		$$\eta_z(t)=(2-q)t^{-(\alpha+\beta)}\Int B(tz)+t^{-(\alpha+\beta)}\Int D(tz).$$
		Here we emphasize that
		\begin{equation}\label{ee3}
		\ds\lim_{t\to 0^+}\eta_z(t)=+\infty.
		\end{equation}
		Indeed, using Lemma \ref{lemA} and \eqref{ee21} and putting $0<t<1$, we easily see that
		$$\begin{array}{rcl}\eta_z(t)&\geq&\min(\ell_i-q)t^{-(\alpha+\beta)}\Int B(tz)\\[2ex]
		&\geq& \min(\ell_i-q)t^{\max\{m_i\}-(\alpha+\beta)} \Int B(z).
		\end{array}$$
		Using one more time that $m_i < \alpha+\beta$ and $\ell_i > q $ it follows that \eqref{ee3} holds true.
		
		On the other hand, we mention that $\eta_{z}$ is a decreasing function which satisfies
		\begin{equation}\label{ee33}
		\lim_{t\to \infty}\eta_z(t)=0.
		\end{equation}
		In fact, using one more time Lemma \ref{lemA} and \eqref{ee21}, for any $t>1$, we observe that
		\begin{equation}\label{ee4}
		\min(\ell_i-q)t^{\min\{\ell_i\}-(\alpha+\beta)}\Int B(z)\leq \eta_z(t)\leq \max(m_i-q)t^{\max\{m_i\}-(\alpha+\beta)}\Int B(z).
		\end{equation}
		Hence \eqref{ee4} says that \eqref{ee33} holds true. Moreover, we have that
		\begin{eqnarray}
		\eta_z^\prime(t)&=&\Int (2-(\alpha+\beta))(2-q)t^{-(1+\alpha+\beta)} B(tz)+ (5-(\alpha+\beta+q))t^{-(1+\alpha+\beta)} D(tz) \nonumber\\
		&+&t^{-(1+\alpha+\beta)}\Int \phi_1^{\prime\prime}(t|\nabla u|)|\nabla tu|^4+\phi_2^{\prime\prime}(t|\nabla v|)|\nabla tv|^4.\nonumber
		\end{eqnarray}
As a consequence, using the estimates in Remark \ref{conseqphi3}, we obtain that
		$$
		\begin{array}{rcl}
		\eta_z^{\prime}(t)&\leq& ((2-(\alpha+\beta))(2-q)+m_1-2)t^{-(1+\alpha+\beta)} \Int \phi_1(|\nabla tu|)|\nabla tu|^2\\[2ex]
		&+& ((2-(\alpha+\beta))(2-q)+m_2-2)t^{-(1+\alpha+\beta)} \Int \phi_2(|\nabla tv|)|\nabla tv|^2\\[2ex]
		&+& ((m_1+1)-(\alpha+\beta+q))t^{-(1+\alpha+\beta)}\Int\phi_1^\prime(t|\nabla u|)|\nabla tu|^3\\[2ex]
		&+&((m_2+1)-(\alpha+\beta+q))t^{-(1+\alpha+\beta)}\Int\phi_2^\prime(t|\nabla v|)|\nabla tv|^3
		\end{array}
		$$
		
		It follows from $(m_i-1)\ell_i\leq(m_i-1)m_i\leq \max\{(m_i-1)m_i\}$ and hypothesis $(H)$ that
		\begin{equation}\label{antigaH}
		{0}<q<\Fr{\ell_i(\alpha+\beta-m_i)}{\alpha+\beta-\ell_i}.
		\end{equation}
		
		As a consequence, we also see that
		\begin{equation*}
		(\alpha+\beta-1)(m_i-\ell_i)<(\alpha+\beta-\ell_i)(m_i-q).
		\end{equation*}
		Moreover, we mention that $((2-\alpha+\beta)(2-q)+m_i-2)+((m_i+1)-(\alpha+\beta+q))(\ell_i-2) = (\alpha+\beta-1)(m_i-\ell_i)-(\alpha+\beta-\ell_i)(m_i-q)$.
		Under these conditions and using $(\phi_3)$ it is no hard to verify that
		$$
		\begin{array}{c}
		\eta_z^\prime(t)\leq\max[(\alpha+\beta-1)(m_i-\ell_i) - (\alpha+\beta-\ell_i)(m_i-q)]t^{-(1+\alpha+\beta)}\Int B(tz)
		<0.
		\end{array}
		$$
		Thus we conclude that $\eta_z$ is decreasing function proving that $m_z $ has an unique critical point which is a maximum critical point for $m_{z}$.
		
		Now we shall prove the item $(3)$. Here we borrow some ideas discussed in the proof of item $(3)$. At this point, we mention that $\displaystyle\lim_{t\rightarrow 0_+}\overline m_z(t)=-\infty, \lim_{t\rightarrow \infty}\overline m_z(t)=0$ and $\overline{m}_z$ is increasing for $ t > 0$ small enough. Namely, using the Lemma \ref{lemA} we observe that
		\begin{eqnarray}
		\lim_{t\rightarrow 0_+}\overline m_z(t)\leq \lim_{t\rightarrow 0_+}\Int t^{-(\alpha+\beta)}\max \{t^{\ell_1},t^{\ell_2}\}B(z)-t^{q-\alpha+\beta}K(z)=-\infty,\nonumber\\
		\lim_{t\rightarrow +\infty} |\overline m_z(t)|\leq \lim_{t\rightarrow \infty}\Int t^{-(\alpha+\beta)}\max \{t^{m_1},t^{m_2}\}B(z)+t^{q-\alpha+\beta}K(z)=0.\nonumber
		\end{eqnarray}
		Here was used the fact that $\Int K(z)>0$ and $q<\ell_i\leq m_i<\alpha+\beta,~i=1,2$. Furthermore, using the Lemma \ref{lemA}, we see that
		\begin{eqnarray}
		\overline{m}^\prime_z(t)\geq t^{-(\alpha+\beta+1)}\Int (\min\{\ell_i\}-\alpha-\beta)t^{\max\{m_i\}}B(z)+(\alpha+\beta-q)K(z)\nonumber
		\end{eqnarray}
		holds true for $ t > 0$ small enough. As a consequence $\overline{m}^\prime_z(t)>0$ for any $t>0$ small enough.
		
		Now the main point is to show that $\overline m_z$ has an unique critical point $\overline{t}>0$. Note that, $\overline m^{\prime}_z(t)=0$ if and only if
		\begin{eqnarray}\label{eta-bar}
		\Int t^{-q}[{(\alpha+\beta)}B(tz)-B^\prime(tz)tz]=(\alpha+\beta-q)\Int K(z).
		\end{eqnarray}
		Define the auxiliary function $\overline\eta_{z}: (0,\infty) \rightarrow \mathbb{R}$ given by
		\begin{eqnarray}
		\overline\eta_z(t)=\Int t^{-q}[(\alpha+\beta)B(tz)-B^\prime(tz)tz].\nonumber
		\end{eqnarray}
		It follows from Lemma \ref{lemA} that
		$$ t^{\max\{m_i\}-q}(\alpha+\beta-\max\{m_i\})\Int B(z)\leq\overline\eta_z(t)\leq t^{\min\{\ell_i\}-q}(\alpha+\beta-\min\{m_i\})\Int B(z),~0<t<1.$$
		Consequently, we have that $\displaystyle\lim_{t\rightarrow 0^+}\overline\eta_{z}(t)=0$.
		Moreover, using Lemma \ref{lemA}, we infer also that
		$$t^{\min\{\ell_i\}-q}(\alpha+\beta-\max\{m_i\})\Int B(z)\leq\overline\eta_z(t),$$
		Hence we obtain that $\displaystyle\lim_{t\rightarrow \infty}\overline\eta_{z}(t)=\infty$.
		Now we claim that $\overline\eta_{z}$ is a increasing function. In fact,
		in view of the hypothesis (H), we mention that
		\begin{eqnarray*}
			\overline\eta^\prime_{z}(t)&=&t^{-1-q}[-q(\alpha+\beta)B(tz)+(\alpha+\beta+q-1)B^\prime(tz)tz-B^{\prime\prime}(tz)t^2z^2]\nonumber\\
			&\geq& t^{-1-q}[-q(\alpha+\beta)+(\alpha+\beta+q-1)\min\{\ell_i\}-\max\{(m_i-1)m_i\}]B(tz)>0.
		\end{eqnarray*}
		Thus, applying \eqref{eta-bar}, there exists an unique critical point $\overline t>0$ which is a global maximum point for $\overline m_{z}$.
		The proof for this lemma is now complete.
	\end{proof}
	
	Now we shall prove that $m_{z}$ has a behavior at infinity and at the origin which are described by the sign of $\displaystyle \int_{\Omega} K(x) \,\, \mbox{and} \,\, \displaystyle \int_{\Omega} Q(z).$
	This is crucial tool in to prove a complete description on the geometry for the fibering maps. In order to perform our next results we shall consider the functions $g_{\theta_i}:[0,\infty)\rightarrow \mathbb{R},~\theta_i\in\{\max\{m_i\},\min\{\ell_i\}\}$ defined by
	\begin{equation*}
	g_{_{\theta_i}}(t):=t^{\theta_i-1}\Int B(z)-t^{\el-1}\Int Q(z),\ \ t>0.
	\end{equation*}
	It is easy to see that there exists $\bar{t}:=\overline{t}_{\theta_i}>0$ such that
	$$g_{_{\theta_i}}(\overline{t}_{\theta_i})=\displaystyle \max_{t>0}g_{_{\theta_i}}(t).$$
	Actually, we observe that
	\begin{equation*}
	\bar{t}:=\overline{t}_{\theta_i}
	=\left[\Fr{(\theta_i-1)\Int B(z)}{(\alpha+\beta-1)\Int Q(z)}\right]^{\frac{1}{\alpha+\beta-\theta_i}}.
	\end{equation*}
	Inspired in part by the recent works \cite{CarvalhoClaudiney,tarantello,tarantellonewmann} we shall assume the following assumptions:
	\begin{itemize}
		\item[$(D)$] Suppose that either $\overline{t}_{\min\{\ell_i\}},\overline{t}_{\max\{m_i\}}\geq 1$ or $\overline{t}_{\min\{\ell_i\}},\overline{t}_{\max\{m_i\}}\leq 1$. Then we obtain
		$$(\lambda+\mu)<\min_{i=1,2}\left\{ \left[\Fr{A_1\min\{\ell_i(\ell_i-q)}{(\alpha+\beta-q)S^{\alpha+\beta}}\right]^{\frac{\theta_i-q}{\alpha+\beta-\theta_i}}
		\left[\Fr{A_1\min\{\ell_i(\alpha+\beta-m_i)}{(\el-q)R}\right]\right\}=:\eta_1.$$
		
		\item [$(E)$] Suppose $\ell_i<m_i$ and $\overline{t}_{\min\{\ell_i\}}\leq 1\leq \overline {t}_{\max\{m_i\}}$ hold. Assume also that $$(\lambda+\mu)\leq \min\left\{\eta_i,\Fr{\alpha+\beta- \{m_i\}}{\{m_i\}-1}\right\}.$$
	\end{itemize}
	
	In order to find a second solution for the quasilinear elliptic System \eqref{eq1} we consider a more restrictive condition which can be written in the following form:
	\begin{itemize}
		\item [$(E)'$] Suppose that $\overline{t}_{\min\{\ell_i\}}\leq 1\leq \overline {t}_{\max\{m_i\}}$ holds. Assume also that
		$$(\lambda+\mu)\leq \min\left\{\frac{q}{m_i}\eta_1,\Fr{\alpha+\beta-m_i}{m_i-1}\right\}.$$
	\end{itemize}
	
	\begin{rmk}
		Notice that $g_{\max\{m_i\}}(t)=g_{\min\{\ell_i\}}(t)=m_z(t)$ if and only if $t=1$.
	\end{rmk}
	\begin{lem}\label{mapf1f2}
		Suppose either $(D)$ or $(E)$. Then we obtain the following identity
		\begin{eqnarray}\label{des-f}
		\max_{t>0}m_z(t)\geq \Int K(z),\ \ z\in W.
		\end{eqnarray}
	\end{lem}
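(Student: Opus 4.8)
The plan is to reduce the inequality $\max_{t>0}m_z(t)\ge\int_\Omega K(z)$ to a comparison between $m_z$ and the explicit auxiliary functions $g_{\theta_i}$, whose maxima we can compute in closed form. First I would invoke Lemma~\ref{lemA}(a) to sandwich $\int_\Omega B(tz)$ between $t^{\min\{\ell_i\}}$ and $t^{\max\{m_i\}}$ multiples of $\int_\Omega B(z)$; subtracting the term $t^{\alpha+\beta-q}\int_\Omega Q(z)$ and dividing by $t^q$ (recall $m_z(t)=t^{-q}\int_\Omega(B(tz)-Q(tz))$ and $Q(tz)=t^{\alpha+\beta}Q(z)$) yields, for each admissible exponent $\theta_i\in\{\min\{\ell_i\},\max\{m_i\}\}$, a one-sided bound of the type $m_z(t)\ge g_{\theta_i}(t)$ on the appropriate range of $t$ — namely $g_{\max\{m_i\}}$ controls $m_z$ from below for $0<t\le 1$ and $g_{\min\{\ell_i\}}$ does so for $t\ge 1$ (using the Remark that the two $g_{\theta_i}$ and $m_z$ all agree exactly at $t=1$). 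So $\max_{t>0}m_z(t)$ is bounded below by $\max_{t>0}g_{\theta_i}(t)=g_{\theta_i}(\overline t_{\theta_i})$ for the value of $\theta_i$ whose maximizer $\overline t_{\theta_i}$ falls in the range where the corresponding inequality is valid.

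Next I would substitute the explicit maximizer
$$\overline t_{\theta_i}=\left[\Fr{(\theta_i-1)\Int B(z)}{(\alpha+\beta-1)\Int Q(z)}\right]^{\frac{1}{\alpha+\beta-\theta_i}}$$
into $g_{\theta_i}$ and simplify: this produces a bound of the form $\max_{t>0}g_{\theta_i}(t)\ge C_{\theta_i}\,\big(\Int B(z)\big)^{\frac{\alpha+\beta-1}{\alpha+\beta-\theta_i}}\big(\Int Q(z)\big)^{-\frac{\theta_i-1}{\alpha+\beta-\theta_i}}$ for an explicit constant depending only on $\theta_i,\alpha,\beta$. Using Lemma~\ref{tec} — part (i) to bound $\Int B(z)$ below (via $\Int H(z)\ge A_1\|z\|^{\theta_i}$ together with the definition $H(z)=A(z)-\frac{1}{\alpha+\beta}B(z)$ and \eqref{22}), part (ii) to bound $\Int Q(z)\le S^{\alpha+\beta}\|z\|^{\alpha+\beta}$ above, and part (iii) to bound $\Int K(z)\le R\|z\|^q$ above — the desired inequality $\max_{t>0}m_z(t)\ge\Int K(z)$ becomes a purely numerical inequality in $\|z\|$ of the shape $\tilde C\|z\|^{\theta_i}\ge (\lambda+\mu)R\|z\|^q$, i.e. $\tilde C\|z\|^{\theta_i-q}\ge(\lambda+\mu)$, which holds precisely when $\lambda+\mu$ is below the threshold $\eta_1$ appearing in \eqref{eta1}. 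Assumption $(D)$ supplies exactly this smallness; in the case where $\overline t_{\min\{\ell_i\}}$ and $\overline t_{\max\{m_i\}}$ straddle $1$, assumption $(E)$ instead forces the relevant maximizer back onto the admissible range by the extra constraint $(\lambda+\mu)\le\frac{\alpha+\beta-m_i}{m_i-1}$, and one repeats the estimate there.

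The main obstacle is bookkeeping the case split: one must check that in each of the two regimes of $(D)$ (both $\overline t_{\theta_i}\ge 1$, or both $\le 1$) there is a single $\theta_i$ for which (a) the maximizer $\overline t_{\theta_i}$ lies in the interval where $m_z\ge g_{\theta_i}$ is valid and (b) the resulting closed-form bound is strong enough after feeding in Lemma~\ref{tec}; and separately, in the mixed regime governed by $(E)$, that the extra hypothesis genuinely rescues the argument. The inequalities themselves are elementary once the correct $\theta_i$ is fixed — the delicate point is only matching the geometry of $g_{\theta_i}$ (increasing then decreasing, with peak at $\overline t_{\theta_i}$) against the side on which Lemma~\ref{lemA}(a) gives the favorable direction, and confirming that $g_{\max\{m_i\}},g_{\min\{\ell_i\}},m_z$ coincide at $t=1$ so no gap opens up at the crossover.
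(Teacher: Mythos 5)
Your overall strategy coincides with the paper's: reduce to $\int_\Omega K(z)>0$, sandwich $m_z$ between the explicit power functions $g_{\theta_i}$ on the appropriate range of $t$ via Lemma~\ref{lemA}(a), plug in the closed-form maximizer, and then feed in Lemma~\ref{tec} to compare with $\int_\Omega K(z)\le(\lambda+\mu)\bar S^q\|z\|^q$. The case-split logic you describe (that $g_{\max\{m_i\}}$ bounds $m_z$ from below for $t\le 1$, $g_{\min\{\ell_i\}}$ for $t\ge 1$, so the relevant $g_{\theta_i}$ is the one whose peak $\overline t_{\theta_i}$ lies in the valid range — which is exactly what $(D)$ ensures) is a clean and correct way to make rigorous the paper's terse assertion $\max_t m_z(t)\ge\max_t g_{\min\{\ell_i\}}(t)$.

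However, your final reduction has a genuine gap. You report the closed-form bound as
$\max_t g_{\theta_i}(t)\ge C_{\theta_i}\bigl(\int_\Omega B(z)\bigr)^{\frac{\alpha+\beta-1}{\alpha+\beta-\theta_i}}\bigl(\int_\Omega Q(z)\bigr)^{-\frac{\theta_i-1}{\alpha+\beta-\theta_i}}$
and then reduce the desired inequality to $\tilde C\|z\|^{\theta_i-q}\ge\lambda+\mu$. This last inequality cannot hold for all $z\in W$ with a fixed $\lambda+\mu>0$, since $\theta_i>q$ and $\|z\|$ can be taken arbitrarily small; so as stated the argument does not produce a $z$-free threshold. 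The exponents $\theta_i-1$, $\alpha+\beta-1$ you quoted appear to be lifted from the paper's displayed formula for $\overline t_{\theta_i}$, which is itself a typo; the exponents that actually enter the paper's subsequent computation of $g(\overline t_{\theta_i})$ are $\theta_i-q$ and $\alpha+\beta-q$ (equivalently, $g_{\theta_i}(t)=t^{\theta_i-q}\int_\Omega B(z)-t^{\alpha+\beta-q}\int_\Omega Q(z)$, so that $m_z(t)\ge g_{\theta_i}(t)$ on the relevant range, and $\overline t_{\theta_i}=\bigl[\tfrac{(\theta_i-q)\int_\Omega B(z)}{(\alpha+\beta-q)\int_\Omega Q(z)}\bigr]^{1/(\alpha+\beta-\theta_i)}$). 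With these corrected exponents, substituting $\int_\Omega B(z)\ge A_1\min\{\ell_i\}\|z\|^{\theta_i}$ and $\int_\Omega Q(z)\le S^{\alpha+\beta}\|z\|^{\alpha+\beta}$ makes the $\|z\|$ powers combine to exactly $\|z\|^q$: indeed $\theta_i\cdot\frac{\alpha+\beta-q}{\alpha+\beta-\theta_i}-(\alpha+\beta)\cdot\frac{\theta_i-q}{\alpha+\beta-\theta_i}=q$. That $\|z\|^q$ then cancels against the $\|z\|^q$ in $\int_\Omega K(z)\le(\lambda+\mu)\bar S^q\|z\|^q$, leaving a purely numerical smallness condition on $\lambda+\mu$ — this exact cancellation is the whole point of choosing the $g_{\theta_i}$ exponents as they are, and your version loses it. A smaller issue: Lemma~\ref{tec}(i) bounds $\int_\Omega H(z)$, not $\int_\Omega B(z)$, from below; the lower bound on $\int_\Omega B(z)$ used in the proof comes directly from Proposition~\ref{estimativanorma} combined with \eqref{22}, not from $H$.
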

	\begin{proof} Firstly, we mention that $\displaystyle\max_{t>0}m_z(t)>0$. It is important to see that if $\displaystyle\int_\Omega K(z) \leq 0$ implies that \eqref{des-f} is satisfied. In this way, we can consider the case $\displaystyle\int_\Omega K(z) > 0$. Here we shall consider the hypothesis $(D)$. Assuming the hypotheses $(E)$ or $(E)'$ the proof can be done using similar ideas discussed in the present proof.
		
		Let us consider the case $\overline{t}_{\min\{\ell_i\}}, \ \overline{t}_{\max\{m_i\}}\geq 1$. The proof for the other cases are analogous which can be found in \cite{CarvalhoClaudiney}. Remembering that $\overline{t}_{\min\{\ell_i\}}, \ \overline{t}_{\max\{m_i\}}\geq 1$ and using the fact that
		$\overline{t}_{\min\{\ell_i\}}\geq 1$ we can proceed as in \cite{CarvalhoClaudiney,tarantello,tarantellonewmann} proving the following inequalities
		\begin{equation*}
		(\alpha+\mu-1)\Int B(z)\leq(\min\{\ell_i\}-1)\Int K(z)
		\end{equation*}
		and
		\begin{equation}\label{A1A2}
		A_1\min{\ell_i}||z||^{\min{\ell_i}}\leq\Int B(z)\leq A_2\max\{m_i\}||z||^{\max\{m_i\}}.
		\end{equation}
		
		Now, we observe that
		\begin{equation*}
		\begin{array}{rcl}
		g(\bar{t}_{\theta_i})&=&\bar{t}^{\theta_i-q}\left[\Int (K(z)-\bar{t}^{\alpha+\beta-\theta_i}Q(z))\right] = \bar{t}^{\theta_i-q}\Fr{\alpha+\beta-\theta_i}{\alpha+\beta-q}\Int B(z)\\[3ex]
		&=&\left[\Fr{\theta_i-q}{(\alpha+\beta-q)\Int Q(z)}\right]^{\frac{\theta_i-q}{\alpha+\beta-\theta_i}}\left[\Fr{\alpha+\beta-\theta_i}{\alpha+\beta-q}\right]\left[\Int B(z)\right]^{\frac{\alpha+\beta-q}{\alpha+\beta-\theta_i}}
		\end{array}
		\end{equation*}
		
		Since $\displaystyle\max_{t>0}m_z(t)\geq \displaystyle\max_{t>0}g_{_{\min\{\ell_i\}}}(t)=\displaystyle\max_{t>0}g_{_{\min\{\ell_i\}}}(\bar{t}_{\min\{\ell_i\}})$, we observe also that
		\begin{equation*}\label{A2}
		\begin{array}{rcl}
		\displaystyle\max_{t>0} m_z(t)&\geq& \left[\Fr{\min\{\ell_i\}-q}{(\alpha+\beta-q)\Int Q(z)}\right]^{\frac{\min\{\ell_i\}-q}{\alpha+\beta-\min\{\ell_i\}}}\left[\Fr{\alpha+\beta-\min\{\ell_i\}}{\alpha+\beta-q}\right]\left[\Int B(z)\right]^{\frac{\alpha+\beta-q}{\alpha+\beta-\min\{\ell_i\}}}.\end{array}
		\end{equation*}
		
		At this stage, taking into account \eqref{A1A2}, we deduce that
		\begin{equation*}\label{A2}
		\begin{array}{rcl}
		\displaystyle\max_{t>0} m_z(t)&\geq& \left[\Fr{A_1\min\{\ell_i\}(\min\{\ell_i\}-q)}{(\alpha+\beta-q)\Int Q(z)}\right]^{\frac{\min\{\ell_i\}-q}{\alpha+\beta-\min\{\ell_i\}}}\left[\Fr{A_1\min\{\ell_i\}(\alpha+\beta-\min\{\ell_i\})}{\alpha+\beta-q}\right]||z||^{\frac{\min\{\ell_i\}(\alpha+\beta-q)}{\alpha+\beta-\min\{\ell_i\}}}.\end{array}
		\end{equation*}
		Furthermore, using Lemma \ref{tec}, we see that
		\begin{equation*}\label{A2}
		\begin{array}{rcl}
		\displaystyle\max_{t>0} m_z(t)&\geq& \left[\Fr{A_1\min\{\ell_i\}(\min\{\ell_i\}-q)}{(\alpha+\beta-q)||z||^{\alpha+\beta}S^{\alpha+\beta}}\right]^{\frac{\min\{\ell_i\}-q}{\alpha+\beta-\min\{\ell_i\}}}\left[\Fr{A_1\min\{\ell_i\}(\alpha+\beta-\min\{\ell_i\})}{\alpha+\beta-q}\right]||z||^q||z||^{\frac{(\min\{\ell_i\}-q)(\alpha+\beta)}{\alpha+\beta-\min\{\ell_i\}}}\\[3ex]
		&\geq&
		\left[\Fr{A_1\min\{\ell_i\}(\min\{\ell_i\}-q)}{(\alpha+\beta-q)||z||^{\alpha+\beta}S^{\alpha+\beta}}\right]^{\frac{\min\{\ell_i\}-q}{\alpha+\beta-\min\{\ell_i\}}}\left[\Fr{A_1\min\{\ell_i\}(\alpha+\beta-\min\{\ell_i\})}{\alpha+\beta-q}\right]\Fr{\Int K(z)}{\bar{S}^q(\lambda+\mu)}\\[3ex]
		&\geq&
		\left[\Fr{A_1\min\{\ell_i\}(\min\{\ell_i\}-q)}{(\alpha+\beta-q)S^{\alpha+\beta}}\right]^{\frac{\min\{\ell_i\}-q}{\alpha+\beta-\min\{\ell_i\}}}\left[\Fr{A_1\min\{\ell_i\}(\alpha+\beta-
			\min\{m_i\})}{\alpha+\beta-q}\right]\Fr{\Int K(z)}{\bar{S}^q(\lambda+\mu)}
		.\end{array}
		\end{equation*}
		As a consequence, applying \eqref{eta1}, we obtain the desired result.  This ends the proof.
	\end{proof}
	
	Since we stay interesting in quasilinear linear elliptic systems with indefinite nonlinearities the fibering maps geometry is not simple. Depending on the signs for the concave and convex terms we prove that the fibering has critical points. This is contained in the following result
	
	\begin{lem}\label{fib}
		Let $z\in W\setminus\{0\}$ be a fixed function. Then we shall consider the following assertions:
		\begin{enumerate}
			\item Assume that $\Int Q(z)\leq 0$. Then $\gamma^\prime_z(t)\neq 0$ for any $t>0$ and $(\lambda+\mu) > 0$ whenever $\Int K(z) \leq 0$. Furthermore, there exist an unique $ t_1= t_1(z,\lambda,\mu)$ in such way that $\gamma^\prime_z(t_1)=0$  and $ t_1 z\in \N^+$ whenever $\Int K(z) > 0.$
			
			\item Assume that $\Int Q(z)> 0$ holds. Then there exists an unique $t_1=t_1(z,\lambda,\mu)> \tilde t$ such that
			$\gamma^\prime_z(t_1)= 0$ and $t_1 z\in \N^-$ whenever $\Int K(z)\leq 0$.
			\item Assume that $(H)$ holds. For each $\mu, \lambda$, such that $(\mu +\lambda)>0$ is small enough there exist unique $0<t_1=t_1(z,\lambda)<\tilde t, \overline t<t_2=t_2(z,\lambda,\mu)$, where $\tilde t, \overline t$ were defined in the Lemma \ref{m_u-comp}, such that 	$\gamma^\prime_z(t_1)=\gamma^\prime_z(t_2)=0$, $ t_1 z \in \N^+$ and $ t_2 z \in \N^-$ whenever $\Int K(z)> 0$, $\Int Q(z)> 0$ holds.
		\end{enumerate}	
	\end{lem}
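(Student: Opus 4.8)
The strategy is to translate each assertion about critical points of $\gamma_z$ into a statement about the auxiliary functions $m_z$ and $\overline m_z$ via Lemma \ref{m_uegamma_u}, and then read off the existence, uniqueness and sign information from the monotonicity/limit analysis carried out in Lemma \ref{m_u-comp}. Recall from Lemma \ref{m_uegamma_u} that $t z\in\N$ precisely when $m_z(t)=\int_\Omega K(z)$ (equivalently $\overline m_z(t)=\int_\Omega Q(z)$), and that $tz\in\N^+$ or $\N^-$ according to the sign of $\gamma''_z(1)$ evaluated at $tz$, which in turn is governed by whether $t$ lies to the left or right of a maximum of $m_z$ (resp. $\overline m_z$). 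So each item below is essentially an intermediate-value plus strict-monotonicity argument on the relevant auxiliary function.

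\emph{Item (1).} Here $\int_\Omega Q(z)\le 0$, so by Lemma \ref{m_u-comp}(1) the function $m_z$ is strictly increasing on $(0,\infty)$ with $m_z(0^+)=0$ and $m_z(\infty)=\infty$. If $\int_\Omega K(z)\le 0$, the equation $m_z(t)=\int_\Omega K(z)$ has no solution $t>0$ (the left side is positive), hence $\gamma'_z(t)\neq 0$ for all $t>0$; this uses no smallness on $\lambda+\mu$. If $\int_\Omega K(z)>0$, strict monotonicity of $m_z$ from $0$ to $\infty$ gives a unique $t_1>0$ with $m_z(t_1)=\int_\Omega K(z)$, i.e. $\gamma'_z(t_1)=0$; since $m_z$ is increasing at $t_1$ one computes $\gamma''_{z}(1)$ at $t_1z$ has the sign of $m'_z(t_1)>0$ (more precisely $t_1^{q+1}m'_z(t_1)=\gamma''_{t_1 z}(1)$ up to a positive factor), so $t_1 z\in\N^+$.

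\emph{Item (2).} Now $\int_\Omega Q(z)>0$; by Lemma \ref{m_u-comp}(2), $m_z$ increases up to a unique global maximum at $\tilde t$ and then decreases to $-\infty$, while still $m_z(0^+)=0$. When $\int_\Omega K(z)\le 0$, the horizontal line at height $\int_\Omega K(z)$ meets the graph of $m_z$ exactly once, and that intersection must occur on the decreasing branch $t>\tilde t$ (on the increasing branch $m_z$ stays $\ge 0> \int_\Omega K(z)$ only at $t=0$, and $m_z(\tilde t)>0$); this yields a unique $t_1>\tilde t$ with $\gamma'_z(t_1)=0$, and since $m_z$ is decreasing there, $\gamma''$ at $t_1 z$ is negative, so $t_1 z\in\N^-$.

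\emph{Item (3).} This is the substantive case and the main obstacle. We have $\int_\Omega K(z)>0$ and $\int_\Omega Q(z)>0$, so $m_z$ has the ``hump'' shape of Lemma \ref{m_u-comp}(2): increasing from $0$ to the maximum value $m_z(\tilde t)=\max_{t>0}m_z(t)$, then decreasing to $-\infty$. The key quantitative input is Lemma \ref{mapf1f2}, valid under $(H)$ (and the smallness embodied in $(D)$/$(E)$, which is what ``$(\mu+\lambda)>0$ small enough'' encodes), giving $\max_{t>0}m_z(t)=m_z(\tilde t)\ge\int_\Omega K(z)>0$. Hence the line at height $\int_\Omega K(z)$ meets the increasing branch in a unique point $t_1\in(0,\tilde t]$ and the decreasing branch in a unique point $t_2\in[\tilde t,\infty)$; a strict inequality in Lemma \ref{mapf1f2}, or excluding the degenerate case via $\N^0=\emptyset$ from Lemma \ref{c1}(1) for $\lambda+\mu$ small, upgrades these to $0<t_1<\tilde t<t_2$. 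At $t_1$, $m_z$ is strictly increasing so $\gamma''$ at $t_1 z$ is positive, giving $t_1 z\in\N^+$; at $t_2$, $m_z$ is strictly decreasing so $t_2 z\in\N^-$. Finally, to see $t_2$ lies beyond $\overline t$ (the maximum point of $\overline m_z$), repeat the same argument with $\overline m_z$ in place of $m_z$ using Lemma \ref{m_u-comp}(3): $\overline m_z$ rises from $-\infty$ to its maximum $\overline m_z(\overline t)$ and falls back to $0$, the equation $\overline m_z(t)=\int_\Omega Q(z)>0$ has a solution only if $\overline m_z(\overline t)\ge\int_\Omega Q(z)$, and the two solutions $t_1,t_2$ of Lemma \ref{m_uegamma_u} must then straddle $\overline t$; matching them with the $t_1<\tilde t<t_2$ found above (both characterize the same set $\{t>0:tz\in\N\}$) forces $t_1<\tilde t$ and $\overline t<t_2$. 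The only real work is bookkeeping the three smallness thresholds so that Lemma \ref{mapf1f2} applies and $\N^0=\emptyset$ simultaneously; once those are in force, items (1)--(3) are immediate from the shape of $m_z$ and $\overline m_z$.
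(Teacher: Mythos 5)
Your proposal is correct and follows essentially the same route as the paper: convert each assertion to a level-set equation for $m_z$ (and $\overline m_z$) via Lemma \ref{m_uegamma_u}, read off existence/uniqueness from the shape analysis in Lemma \ref{m_u-comp}, and obtain membership in $\N^{\pm}$ from the sign of $m_z'$ through the identity $m_z'(t) = t^{1-q}\gamma_z''(t)$. The added care in item (3)---excluding the degenerate equality cases via $\N^0=\emptyset$, and deducing $\overline t < t_2$ by matching the $m_z$-roots with the $\overline m_z$-roots rather than simply asserting $\overline m_z(\overline t)>\int_\Omega Q(z)$---tightens details that the paper leaves implicit, but the underlying argument is the same.
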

	\begin{proof}
		First of all, we shall consider the proof for the case $\Int Q(z)\leq 0$ and $\Int K(z)\leq 0$. Using Lemma \ref{m_u-comp} (1) it is easy to verify that
		\begin{equation}\label{m_u}
		m_z(0)=0,~\lim_{t\rightarrow\infty}m_z(t) = \infty \,\,\mbox{and} \,\, m^\prime_z(t)>0, t \geq 0.\nonumber
		\end{equation}
		Under these conditions we deduce that
		$$
		m_z(t)\neq \Int K(z) \,\, \mbox{for any} \,\,t>0, \lambda,\mu>0.
		$$
		According to Lemma \ref{m_uegamma_u} we deduce that $tz\not\in \N$ for any  $t>0$. In particular, we see also that $\gamma_z^\prime(t)\neq 0$ for each $t>0$.
		
		Now we shall consider the proof for the case $\Int K(z) > 0$ and $\Int Q(z) \leq 0$. Using one more time Lemma \ref{m_u-comp} (1) we observe that $m_{z}(0)= 0 , m_{z}(\infty) = \infty$ and $m_{z}$ is a increasing function. In particular, the equation
		$$m_z(t)= \Int K(z)$$
		admits exactly one solution $t_1= t_1(z,\lambda,\mu)>0$. Hence, using Lemma \ref{m_uegamma_u}, we know that $t_1 z\in \N$ proving that $\gamma^\prime_z( t_1) =0$. Furthermore, using the identity
		\begin{equation}\label{rel-m-u-gamma-u}
		m_z(t)=t^{1-q}\gamma_z^\prime(t)+ \Int K(z),\nonumber
		\end{equation}
		we easily see that
		$$0<m^\prime_z(t_1) = t_1^{1-q}\gamma^{\prime\prime}_z(t_1).$$
		In particular, we have been proven that $t_1z\in\N^+$.
		
		Now we shall consider the proof for the case $\Int K(z) \leq 0$ and $\Int Q(z) > 0$. Here the function $m_{z}$
		admits an unique turning point $\tilde{t} > 0$, i.e, we have that  $m^{\prime}_{z}(t) = 0,~t > 0$ if only if $t = \tilde{t}$, see Lemma \ref{m_u-comp} (2).
		Moreover, $\tilde{t}$ is a global maximum point for $m_{z}$ in such way that $m_{z}(\tilde{t}) > 0, m_{z}(\infty) = - \infty$.
		As a product there exists an unique $t_1>\tilde t$ such that
		$$m_z(t_1)= \Int K(z).$$
		Here we emphasize that $m_z^\prime(t_1)<0$ where we have used the fact that $m_{z}$ is a decreasing function in $(\tilde{t}, \infty)$. As a consequence we obtain $0>m^\prime_z(t_1)=t_1^{1-q}\gamma_z^{\prime\prime}(t_1)$ proving that $t_1z\in\N^-$.
		
		At this moment we shall consider the proof for the case $\Int K(z) > 0$ and $\Int Q(z) > 0$. In view of Lemma \ref{mapf1f2} we can consider $(\lambda+\mu)>0$
		small enough in such way that
		$$m_z(\tilde t) > \Int K(z)\qquad\mbox{and}\qquad \overline m_z(\overline t) > \Int Q(z).$$

		It is worthwhile to mention that $m_{z}$ is increasing in $(0, \tilde{t})$ and decreasing in $(\tilde{t}, \infty)$, and  $\overline m_{z}$ is increasing in $(0, \overline{t})$ and decreasing in $(\overline{t}, \infty)$.
		It is not hard to verify that there exist exactly two points $0<t_1=t_1(z,\lambda,\mu)  <\tilde t,\overline t < t_2=t_2(z,\lambda,\mu)$ such that
		$$m_z(t_i)=\Int K(z)\qquad\mbox{and}\qquad \overline m_z( t_i) = \Int Q(z),~i=1,2.$$
		Additionally, we have that $m_z^\prime(t_1),\overline m_z^\prime(t_1)>0$ and $m_z^\prime(t_2),\overline m_z^\prime (t_2)<0$. Arguing as in the previous step we ensure that $t_1z\in\N^+$ and $t_2z\in\N^-$. This completes the proof.  \hfill\cqd
		\vskip.2cm
		
		The next lemma shows that for any $\lambda+\mu>0$ small enough  the function $\gamma_z$ assumes  positive values. This is crucial for the proof of our
		main theorems proving that $\gamma_z$ admits one or two critical points.
		Now we shall prove that $J$ is away from zero on the Nehari manifold $\N^{-}$. In particular, any critical point on
		$\N^{-}$ provide us a nontrivial critical point.
		\vskip.2cm
		\begin{lem}\label{nehari-} There exist $\delta_1, \eta_{2} > 0$ in such way that $J(z)\geq\delta_1$ for any $z\in \N^{-}$
			where $0<\lambda +\mu< \eta_2$.
		\end{lem}
		\proof Since $z\in\N^-$ we infer that
		$\gamma^{\prime\prime}_{z}(1)<0$. Arguing as in the proof of Lemma \ref{c1} we mention that
		$$||z||^{\alpha+\beta}\geq \left[\Fr{A_1\min\ell_i(\ell_i-q)}{(\alpha+\beta-q)S^{\alpha+\beta}}\right]||z||^{\theta_i}$$
		where $\theta_i$ and $A_1$ are given by Proposition \ref{estimativanorma}.
		
		Putting all these facts together we see that
		\begin{equation*}
		||z||\geq \left[\Fr{A_1\min\ell_i(\ell_i-q)}
		{(\alpha+\beta-q)S^{\alpha+\beta}}\right]^{\frac{1}{\alpha+\beta-\theta_i}}.
		\end{equation*}
		Furthermore, arguing as in the proof of Proposition \ref{coercive}, we also mention that
		$$\begin{array}{rcl}
		J(z)&\geq& A_1 ||z||^{\theta_i}-(\lambda +\mu)\left(\Fr{1}{\alpha+\beta}-\Fr{1}{q}\right)R||z||^q\\
		&\geq&\left[\Fr{A_1\min\{\ell_i(\ell_i-q)\}}{(\alpha+\beta-q)S^{\alpha+\beta}}\right]^{\frac{q}{\alpha+\beta-\theta_i}}\left[\min\left\{\ell_i\left(\Fr{1}{m_i}-\Fr{1}{\alpha+\beta}\right)\right\}\left[\Fr{A_1\min\{\ell_i(\ell_i-q)\}}{(\alpha+\beta-q)S^{\alpha+\beta}}\right]^{\frac{\theta_i-q}{\alpha+\beta-\theta_i}}\right.\\
		&-&\left.(\lambda +\mu)\left(\Fr{1}{\alpha+\beta}-\Fr{1}{q}\right)R\right]
		=\left[\bar{A_i} +(\lambda+\mu) B \right],
		\end{array}$$
		where $A_1$, $R$ were defined in Lemma \ref{tec}-(iii) and Proposition \ref{estimativanorma}, respectively. Here we also define
		$$\bar{A_i}:=\left[\Fr{A_1\min\{\ell_i(\ell_i-q)\}}{(\alpha+\beta-q)S^{\alpha+\beta}}\right]^{\frac{q}{\alpha+\beta-\theta_i}}\min\left\{\ell_i\left(\Fr{1}{m_i}-\Fr{1}{\alpha+\beta}\right)\right\}\left[\Fr{A_1\min\{\ell_i(\ell_i-q)\}}{(\alpha+\beta-q)S^{\alpha+\beta}}\right]^{\frac{\theta_i-q}{\alpha+\beta-\theta_i}}$$
		and
		$$B=\left[\Fr{A_1\min\{\ell_i(\ell_i-q)\}}{(\alpha+\beta-q)S^{\alpha+\beta}}\right]^{\frac{q}{\alpha+\beta-\theta_i}}\left(\Fr{1}{q}-\Fr{1}{\alpha+\beta}\right)R.$$
		This concludes the proof whenever $\lambda+\mu<\Fr{\bar{A_i}}{B}$, i.e, the proof for this proposition follows for any $$\lambda+\mu<\min\left\{\Fr{q}{m_i}\right\}\eta_1$$ where $\eta_1>0$ is given by \eqref{eta1}.\hfill\cqd

		Now we shall prove that any minimizer on $\N^{+}$ has negative energy. More specifically,
		we can show the following result
		
		\begin{lem}\label{nehari+} Suppose $(H)$. Then there exist $z \in \N^{+}$ and $\eta_{1} > 0$ in such way that $\ds\inf_{z\in\N^+} J(z)  \leq J(z) < 0$ for each $0<\lambda +\mu< \eta_1$. In particular, we obtain $\ds\inf_{z\in\N^+} J(z) =\ds\inf_{z\in\N} J(z) $ for each $0<\lambda+\mu < \eta_1$.
		\end{lem}
		\proof Let $z\in \N^+$ be fixed. Here we observe that $\gamma_z^{\prime\prime}(1)>0$. As a consequence we mention that
		$$
		\begin{array}{rcl}
		\Int Q(z)&<&\frac{1}{\alpha+\beta-q}\Int D(z)+(2-q)B(z)
		\\[2ex]
		&\leq&\displaystyle\frac{\max(m_i-q)}{\alpha+\beta-q}\Int B(z).
		\end{array}$$
		
		On the other hand, using the inequality just above and hypothesis $(\phi_3)$ (cf. Remark \ref{conseqphi3}), we easily see that
		\begin{eqnarray*}
			J(z)&\leq& \left(\Fr{1}{\ell_1}-\Fr{1}{q}\right)\Int \phi_1(|\nabla u|)|\nabla u|^2+\left(\Fr{1}{\ell_2}-\Fr{1}{q}\right)\Int \phi_1(|\nabla v|)|\nabla v|^2+\left(\Fr{1}{q}-\Fr{1}{\alpha+\beta}\right)\Int Q(z) \nonumber \\
			&<&\Fr{1}{q}\left[\Fr{q-\ell_1}{\ell_1 }+\Fr{m_1-q}{\alpha+\beta }\right]\Int \phi_1(|\nabla u|)|\nabla u|^2+\Fr{1}{q}\left[\Fr{q-\ell_2}{\ell_2 }+\Fr{m_2-q}{\alpha+\beta }\right]\Int \phi_2(|\nabla v|)|\nabla v|^2 \nonumber \\
			&\leq& \Fr{1}{q}\max\left\{\left[\Fr{q-\ell_i}{\ell_i }+\Fr{m_i-q}{\alpha+\beta }\right]\right\}\Int B(z)
			. \nonumber \\
		\end{eqnarray*}
		In view of the hypothesis \eqref{antigaH} we obtain that right side in the last inequality is negative. As a consequence we observe that $$\ds\inf_{z\in\N^+} J(z)\leq J(z) <0.$$
		In addition, we stress out that $\N=\N^-\cup\N^+$ since $(\lambda+\mu)<\eta_1$ where $\eta_1$ is given by \eqref{eta1} and $\ds\inf_{z\in\N^-} J(z)>0$. Hence we deduce that $$\ds\inf_{z\in\N^+} J(z)=\ds\inf_{z\in\N} J(z).$$
		This completes the proof.
	\end{proof}
	
	\begin{lem}\label{criticalpoint}
		Suppose $(\phi_{1}) - (\phi_{3})$ and $q > 1$. Let $z$  be a local minimum (or local maximum) for $J$ in $\N$. Then
		$z$ is a critical point of $J$ on $W$ for any $(\lambda+\mu)<\eta_1$.
	\end{lem}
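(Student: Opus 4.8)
The plan is to run the standard Lagrange multiplier argument on the constraint manifold $\N$ and then force the multiplier to vanish by testing the Euler--Lagrange identity against the radial direction $z$ itself.

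First I would record the structural facts that make this possible. Since $(\lambda+\mu)<\eta_1$, Lemma \ref{c1} gives $\N^0=\emptyset$ and $\N=\N^+\dot{\cup}\N^-$, a $C^1$-manifold. Concretely, set $G(z):=\left<J^\prime(z),z\right>=\Int B(z)-K(z)-Q(z)$. Because $q>1$ the term $K$ is $C^1$, and because $(\phi_3)$ (equivalently \eqref{22}) keeps the quotient $\phi_i^\prime(t)t/\phi_i(t)$ bounded, the map $z\mapsto\Int B(z)$ is $C^1$ on $W$; hence $G\in C^1(W,\mathbb{R})$ and $\N=G^{-1}(0)\setminus\{0\}$. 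Moreover, exactly as computed inside the proof of Lemma \ref{c1}(2), $\left<G^\prime(z),z\right>=J^{\prime\prime}(z)(z,z)+\left<J^\prime(z),z\right>=\gamma^{\prime\prime}_z(1)$, which is nonzero for every $z\in\N=\N^+\cup\N^-$ precisely because $\N^0=\emptyset$. Thus $0$ is a regular value of $G$ in a neighborhood of $\N$.

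Next, since $z$ is a local minimum (or local maximum) of $J$ restricted to the $C^1$-manifold $\N$, and $J\in C^1(W,\mathbb{R})$ (here again $q>1$ is used), the Lagrange multiplier theorem yields $\theta\in\mathbb{R}$ with
\[
J^\prime(z)=\theta\,G^\prime(z)\qquad\text{in }W^\ast .
\]
Evaluating this identity at $\varphi=z$ and using that $z\in\N$, I get on the left-hand side $\left<J^\prime(z),z\right>=\gamma^\prime_z(1)=0$, and on the right-hand side $\theta\left<G^\prime(z),z\right>=\theta\,\gamma^{\prime\prime}_z(1)$. Hence $\theta\,\gamma^{\prime\prime}_z(1)=0$, and since $z\in\N^+\cup\N^-$ forces $\gamma^{\prime\prime}_z(1)\neq0$, we conclude $\theta=0$. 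Therefore $J^\prime(z)=0$ in $W^\ast$, that is, $z$ is a critical point of $J$ on the whole space $W$.

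The only delicate point — the main obstacle — is justifying that $G$ is genuinely $C^1$ (so that the Lagrange rule applies and $0$ is a bona fide regular value) even though $J$ need not be $C^2$: this is where $(\phi_3)$ and the restriction $q>1$ are essential, and it is the same computation already carried out in Lemma \ref{c1}, so I would simply invoke it rather than redo it. Everything else is the textbook Nehari-constraint reasoning, and it is insensitive to whether $z$ is a minimum or a maximum.
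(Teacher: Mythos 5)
Your Lagrange multiplier argument is correct, and it is exactly the route the paper's own setup is designed for: Lemma~\ref{c1}(2) establishes that $G(z)=\langle J'(z),z\rangle$ has $0$ as a regular value on $\N^+\cup\N^-$ (via $\langle G'(z),z\rangle=\gamma''_z(1)\neq 0$), precisely so that $\N$ is a $C^1$-manifold and the multiplier rule applies; your test-against-$z$ step then kills the multiplier because $\N^0=\emptyset$. The paper itself simply cites \cite{MLCarvalhocritico,JME} for this lemma and omits the details, so you have in fact supplied the argument that the paper declines to write out, and the caveat you flag --- that $G$ is $C^1$ even though $J$ need not be $C^2$, because one differentiates the explicit integrand $B(z)-K(z)-Q(z)$ directly rather than via a genuine bilinear $J''$ --- is exactly the right point to isolate and defer to Lemma~\ref{c1}.
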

	\begin{proof}
		The proof follows using the same ideas discussed in \cite{MLCarvalhocritico,JME}. Here we omit the details.
	\end{proof}
	
	\begin{lem}\label{no-semitriv}
		Suppose $(\phi_{1}) - (\phi_{3})$ and $0<q<1$ or $q > 1$. Let $z\in \N^+$ a weak solution of System \eqref{eq1}. Then $z$ is not semitrivial, that is, $z \neq (u, 0)$ and $z \neq (0,v)$ with $u, v \in W_{0}^{1,\Phi}(\Omega)$.
	\end{lem}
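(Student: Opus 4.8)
The plan is to argue by contradiction. Suppose $z=(u_{0},0)\in\N^{+}$ is a weak solution of \eqref{eq1}; the case $z=(0,v_{0})$ is entirely symmetric, since the system \eqref{eq1} is invariant under the exchange of $(u,\Phi_{1},a,\lambda,\alpha)$ with $(v,\Phi_{2},c,\mu,\beta)$. First I would observe that $u_{0}\neq 0$ (otherwise $z=0\notin\N$), and that, since $\beta>1$, in the weak formulation one has $|u_{0}|^{\alpha-2}u_{0}|0|^{\beta}=0$ and $|0|^{\beta-2}0=0$, so that the second equation is trivially satisfied and the first one says exactly that $u_{0}$ weakly solves $-\Delta_{\Phi_{1}}u_{0}=\lambda a(x)|u_{0}|^{q-2}u_{0}$ in $\Omega$. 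Testing this with $u_{0}$ and using \eqref{22} gives $0<\int_{\Omega}\phi_{1}(|\nabla u_{0}|)|\nabla u_{0}|^{2}=\lambda\int_{\Omega}a|u_{0}|^{q}$, whence $\lambda>0$ and $\int_{\Omega}a|u_{0}|^{q}>0$; in particular $z_{0}:=(u_{0},0)\in\N$ with $\int_{\Omega}Q(z_{0})=0$ and $\int_{\Omega}K(z_{0})>0$.

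The next step is to perturb in the second component. Because $b^{+}\neq 0$ (respectively $b\geq0$, $b\not\equiv0$ in the singular case) the set $\{b>0\}$ is open and nonempty, and $u_{0}$ is continuous and not identically zero (in the singular case $u_{0}>0$ in $\Omega$ by the strong maximum principle); hence I can choose $w\in W_{0}^{1,\Phi_{2}}(\Omega)\setminus\{0\}$ with $w\geq0$ whose support meets $\{b>0\}\cap\{u_{0}\neq0\}$ and, in addition, meets $\{c>0\}$ whenever $\mu>0$, so that $\int_{\Omega}b|u_{0}|^{\alpha}w^{\beta}>0$ and $\int_{\Omega}c\,w^{q}>0$ if $\mu>0$. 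Applying the $\N^{+}$-version of Lemma \ref{proj} to $(u_{0},0)\in\N^{+}$ along the curve $(w_{1},w_{2})=(0,tw)$ produces, for every small $t>0$, a continuous factor $s_{t}:=f(0,tw)>0$ with $s_{0}=1$ and $\zeta_{t}:=s_{t}(u_{0},tw)\in\N^{+}$, and moreover $|s_{t}-1|=O(t)$ by the implicit function theorem.

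Finally I would compare energies. Using the definitions of $J$ and of the fibering map $\gamma_{z_{0}}$ one gets
\[
J(\zeta_{t})-J(z_{0})=\bigl(\gamma_{z_{0}}(s_{t})-\gamma_{z_{0}}(1)\bigr)+\int_{\Omega}\Phi_{2}(s_{t}t|\nabla w|)-\frac{\mu s_{t}^{q}t^{q}}{q}\int_{\Omega}c\,w^{q}-\frac{s_{t}^{\alpha+\beta}t^{\beta}}{\alpha+\beta}\int_{\Omega}b|u_{0}|^{\alpha}w^{\beta}.
\]
Since $z_{0}\in\N^{+}$, $\int_{\Omega}Q(z_{0})=0$ and $q<\ell_{1}$, Lemma \ref{m_u-comp}(1) together with Lemma \ref{fib}(1) shows that $s=1$ is the unique positive critical point of $\gamma_{z_{0}}$ and its global minimum, so $\gamma_{z_{0}}(s_{t})-\gamma_{z_{0}}(1)\geq0$ and is of order $O(|s_{t}-1|^{2})=O(t^{2})$; moreover $\int_{\Omega}\Phi_{2}(s_{t}t|\nabla w|)\leq(s_{t}t)^{\ell_{2}}\int_{\Omega}\Phi_{2}(|\nabla w|)$ for $s_{t}t\leq1$ by the Orlicz scaling estimate (cf.\ Proposition \ref{fang}), hence of order at most $t^{\ell_{2}}$. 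Since $q<\ell_{i}$ and $m_{i}<\alpha+\beta$, the strictly negative contributions $-\tfrac{\mu s_{t}^{q}t^{q}}{q}\int_{\Omega}c\,w^{q}$ (when $\mu>0$) and $-\tfrac{s_{t}^{\alpha+\beta}t^{\beta}}{\alpha+\beta}\int_{\Omega}b|u_{0}|^{\alpha}w^{\beta}$ dominate the remaining terms as $t\to0^{+}$, so that $J(\zeta_{t})<J(z_{0})$ with $\zeta_{t}\in\N^{+}$ for all small $t>0$. This contradicts the fact that $z_{0}$ realizes $\inf_{\N^{+}}J=\inf_{\N}J$ (Lemma \ref{nehari+}), i.e.\ that $z$ is the ground state; the symmetric argument, now using the first equation and the sign of $a$ and $b$, rules out $z=(0,v_{0})$. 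The main obstacle is precisely this last domination step: one must verify that the negative $w$-contribution outweighs $\gamma_{z_{0}}(s_{t})-\gamma_{z_{0}}(1)$, the $\Phi_{2}$-term and the cross error produced by $s_{t}\neq1$ uniformly in $t$, which is where the inequalities $0<q<\ell_{i}\leq m_{i}<\alpha+\beta$ coming from $(H)$, the sign conditions on $a,b,c$, and the rate $|s_{t}-1|=O(t)$ are all needed.
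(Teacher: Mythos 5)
You take a genuinely different route from the paper. The paper's proof constructs an explicit auxiliary function $w_{2}$ by solving $-\Delta_{\Phi_{2}}w=\mu c|w|^{q-2}w$ on $\widetilde{\Omega}=\{b>0\}$ and extending by zero; it then shows directly, from the identity $\int_{\Omega}\phi_{2}(|\nabla w_{2}|)|\nabla w_{2}|^{2}=\mu\int_{\Omega}c|w_{2}|^{q}$ combined with \eqref{22} and $q<\ell_{2}$, that $J(u,w_{2})<J(u,0)$ with no small parameter involved, and finally lowers the energy further by projecting $(u,w_{2})$ onto $\N^{+}$ via Lemma \ref{fib} after verifying $\overline{t}(u,w_{2})>1$. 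Your proposal instead perturbs $(u_{0},0)$ by $tw$ for small $t>0$, applies the implicit function theorem of Lemma \ref{proj}, and compares orders of vanishing. That is a reasonable strategy, but the crucial domination step is not secured.

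Concretely: the claim $|s_{t}-1|=O(t)$ does not follow from Lemma \ref{proj}. That lemma invokes an implicit function theorem for merely continuous maps precisely because $F$ fails to be Lipschitz in $(w_{1},w_{2})$ when $0<q<1$ (the term $\mu c|tw|^{q}=\mu c\,t^{q}w^{q}$ is only H\"older of exponent $q$ at $t=0$), so one obtains continuity but no Lipschitz rate; even in the nonsingular case an expansion of $F$ at $(1,0,0)$ gives $|s_{t}-1|=O(t^{\min\{q,\beta,\ell_{2}\}})$, not $O(t)$. Granting the crude bound $\gamma_{z_{0}}(s_{t})-\gamma_{z_{0}}(1)=O(t^{2})$, the asserted domination still fails as stated: $(H)$ gives $q<\ell_{i}$ but not $q<2$, so when $q\geq 2$ the positive $O(t^{2})$ term need not be outweighed by the negative $O(t^{q})$ term. (With the sharper rate one would get $\gamma_{z_{0}}(s_{t})-\gamma_{z_{0}}(1)=O(t^{2\min\{q,\beta\}})$, which is dominated, but this requires the very differentiability of $s_{t}$ that Lemma \ref{proj} does not supply.) Finally, when $\mu=0$, which the theorems permit, the $O(t^{q})$ contribution disappears; the remaining negative term has size $O(t^{\beta})$ while the positive Dirichlet term $\int_{\Omega}\Phi_{2}(s_{t}t|\nabla w|)$ has size comparable to $t^{\ell_{2}}$, and $(H)$ does not enforce $\beta<\ell_{2}$, so the sign of $J(\zeta_{t})-J(z_{0})$ is not determined. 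This last point is admittedly shared with the paper, whose auxiliary problem \eqref{aux-4} degenerates to a homogeneous equation when $\mu=0$, but your argument does not repair it, and the rate issue is specific to the perturbative approach.
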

	\begin{proof}
		Without any loss generality we assume that $v\equiv 0$. Here we mention that $u$ is a non zero solution of problem  
		\begin{equation}\label{aux-2}
				\left\{
				\begin{array}{l}
				-\Delta_{\Phi_1} v =\mu a(x)|v|^{q-2}v \mbox{ in }\    \Omega, \\
				v=0,~\mbox{on}~\partial \Omega.
				\end{array}
				\right.
		\end{equation}
		Hence
		\begin{eqnarray}\label{u0}
		\Int \phi_1(|\nabla u|)|\nabla u|^2=\lambda\Int a(x)|u|^q>0.
		\end{eqnarray}
		Define $\widetilde{\Omega}:=\{x\in\Omega: b(x)>0\}$ and taking into account conditions $(A)$ or $(B)$ we obtain $|\widetilde{\Omega}|>0$. Now, consider the problem
		\begin{equation}\label{aux-4}
		\left\{
		\begin{array}{l}
		-\Delta_{\Phi_2} w =\mu c(x)|w|^{q-2}w \mbox{ in }\    \widetilde\Omega, \\
		w=0,~\mbox{on}~\partial \widetilde\Omega.
		\end{array}
		\right.
		\end{equation}
		For the singular case, taking into account the hypothesis $(C)$ and $0 < q < 1$, the Problem \eqref{aux-4} has a solution $w_2\in W^{1,\Phi_1}_0(\widetilde\Omega)$ which can be obtained in Goncalves et al  \cite[Thm. 2.1(i)]{Carvalho}. In the nonsingular case the same existence result still holds assuming that $1<q\leq \ell$, see \cite{GoncalveCarvalho}. As a consequence
		$$\int_{\widetilde{\Omega}} \phi_2(|\nabla w_2|)|\nabla w_2|^2=\mu\int_{\widetilde{\Omega}} c(x)|w_2|^{q}.$$
		Now, taking $w_2=0$ in $\Omega/\widetilde{\Omega}$ we infer that
		\begin{eqnarray}\label{w_2}
		\int_{{\Omega}} \phi_2(|\nabla w_2|)|\nabla w_2|^2=\mu\int_{{\Omega}} c(x)|w_2|^{q}.
		\end{eqnarray}
		Consequently, we know that
		\begin{eqnarray}\label{Q>0}
		\Int Q(u,w_2)=\Int b(x)|u|^\alpha|w_2|^{\beta}>0.
		\end{eqnarray}
		According to \eqref{u0} and \eqref{w_2} we also obtain
		\begin{eqnarray}\label{BK}
		\Int B(u,w_2)=\Int K(u,w_2).
		\end{eqnarray}
		Furthermore, using Lemma \ref{fib}, there exists $0<t_1<\overline{t}=\overline{t}(u,w_2)$ in such way that $(t_1u,t_1w_2)\in \N^+$ satisfying
		$$J(t_1u,t_1w_2)=\displaystyle\inf_{0<t\leq \overline{t}}J(tu,tw_2).$$
		
		Now we claim that $\overline t>1$. In fact, we mention that $\overline{t}$ satisfies \eqref{eta-bar}. Using Lemma \ref{lemA} we deduce that
		$$(\alpha+\beta-q)\Int K(u,w_2)\leq (\alpha+\beta-\min\{\ell_i\})\min\left\{\overline{t}^{\min\{\ell_i\}-q},\overline{t}^{\max\{m_i\}-q}\right\}\Int B(u,w_2).$$
		Using \eqref{BK} and $q<\min\{\ell_i\}<\alpha+\beta$ we obtain $\overline{t}>1$. This ends the proof for the claim.

		The last assertion and \eqref{Q>0} imply that
		$$J(t_1u,t_1w_2)\leq J(u,w_2)<J(u,0)=\inf_{z\in \N^+}J(z)=\inf_{z\in \N}J(z).$$
		This is a contradiction. Hence the weak solution z is not semitrivial, i.e, we have that $z\neq (u,0)$. Analogously way we can show that $z\neq(0,v)$. This finishes the proof of this proposition. 	
	\end{proof}

	\section{The proof of our main theorems}\label{main-res}
	
	Now we shall consider the proof of our main results. Now, we borrow some ideas discussed in \cite{Brown2}.
	As a first step we shall consider an auxiliary result in the following form:
	
	\begin{prop}\label{strong-converg}
		Suppose $(\phi_{1}) - (\phi_{3})$ and $0<q < 1$ or $q>1$. Let $(z_n) \in \mathcal{N}_{\lambda,\mu}^{+}$ be a minimizer sequence such that $z_n\rightharpoonup z$ in $W$.
		Then $z_n\rightarrow z$ and $z \in \mathcal{N}_{\lambda,\mu}^{+}$ for all $\lambda+\mu<\eta_1$.
	\end{prop}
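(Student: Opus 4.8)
The plan is to show strong convergence by a standard two-pronged argument: first establish that the weak limit $z$ lies in $\mathcal{N}_{\lambda,\mu}^{+}$, and then upgrade weak convergence to strong convergence by exploiting the compactness of the lower-order terms together with the $(S_{+})$-type property of the $(\Phi_1,\Phi_2)$-Laplacian operator. First I would record that $(z_n)$ being a minimizing sequence for $\inf_{\mathcal{N}} J = \inf_{\mathcal{N}^{+}} J$ (by Lemma \ref{nehari+}) together with coercivity (Proposition \ref{coercive}) forces $(z_n)$ to be bounded, so after passing to a subsequence $z_n \rightharpoonup z$ in $W$, with strong convergence $z_n \to z$ in $L^q(\Omega)\times L^q(\Omega)$ and in $L^{\alpha+\beta}(\Omega)\times L^{\alpha+\beta}(\Omega)$ by the compact embeddings. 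Hence $\int_\Omega K(z_n)\to\int_\Omega K(z)$ and $\int_\Omega Q(z_n)\to\int_\Omega Q(z)$.

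Next I would argue $z \neq 0$. Since $\inf_{\mathcal{N}^{+}} J < 0$ by Lemma \ref{nehari+} and $J(z_n) \to \inf_{\mathcal{N}^{+}} J < 0$, while on $\mathcal{N}_{\lambda,\mu}$ one has $J(z) = \int_\Omega H(z) - (\tfrac1q - \tfrac1{\alpha+\beta})\int_\Omega K(z)$ with $\int_\Omega H(z_n) \geq 0$, the negativity of the limit energy forces $\liminf \int_\Omega K(z_n) > 0$; by the strong $L^q$ convergence this gives $\int_\Omega K(z) > 0$, so $z \neq 0$. Then, using Lemma \ref{m_uegamma_u} and Lemma \ref{fib} (case $\int_\Omega K(z)>0$), there is a unique $t_1 = t_1(z,\lambda,\mu) > 0$ with $t_1 z \in \mathcal{N}^{+}$.

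The key step is to show $t_1 = 1$ and $z_n \to z$ strongly. Suppose first, for contradiction, that $\|\nabla u_n\| \not\to \|\nabla u\|$ or $\|\nabla v_n\|\not\to\|\nabla v\|$; by weak lower semicontinuity of $z \mapsto \int_\Omega A(z)$ (and hence of $z\mapsto\int_\Omega B(z)$ via the inequalities \eqref{22}) one gets strict inequality $\int_\Omega B(z) < \liminf \int_\Omega B(z_n)$. Testing the Nehari identity $\int_\Omega B(z_n) = \int_\Omega K(z_n) + Q(z_n)$ and passing to the limit yields $\int_\Omega B(z) < \int_\Omega K(z) + Q(z)$, i.e. $\gamma_z'(1) < 0$. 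Combined with $\gamma_z'(t_1) = 0$ and the structure of the fibering map described in Lemma \ref{fib} and Lemma \ref{m_u-comp} (the map $m_z$ is increasing on $(0,\tilde t)$ and $t_1 < \tilde t$ lies in the region where $\gamma_z'$ changes from $-$ to $+$ through $t_1$... more precisely $\gamma_z'(1)<0$ places $1$ strictly before $t_1$ in the $\mathcal{N}^{+}$-branch) one deduces $t_1 > 1$. Then, using Lemma \ref{lemA} to compare $\int_\Omega B(t_1 z)$ with $\int_\Omega B(z)$ and the fact that $t_1 z \in \mathcal{N}^{+}$, one estimates
\begin{equation*}
J(t_1 z) < \liminf_{n\to\infty} J(t_1 z_n) \leq \liminf_{n\to\infty} J(z_n) = \inf_{\mathcal{N}^{+}} J,
\end{equation*}
where the last inequality uses that on the curve $t\mapsto J(tz_n)$ the point $t=1$ is the minimum over the $\mathcal{N}^{+}$-branch (since $z_n\in\mathcal{N}^{+}$ and $J(z_n) = \min_{0<t\le \tilde t_n} J(tz_n)$ by Lemma \ref{fib}). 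This contradicts $t_1 z \in \mathcal{N}^{+} \subset \mathcal{N}$. Therefore $\int_\Omega A(z_n) \to \int_\Omega A(z)$, and then $\int_\Omega B(z_n)\to\int_\Omega B(z)$; invoking the uniform convexity / $(S_+)$ property of the Orlicz–Sobolev norm (Appendix) together with $z_n\rightharpoonup z$ gives $z_n \to z$ strongly in $W$. Passing to the limit in the Nehari identity and in $\gamma_{z_n}''(1)$ shows $z \in \mathcal{N}$ and $\gamma_z''(1) \geq 0$; since $\mathcal{N}^0 = \emptyset$ for $\lambda+\mu<\eta_1$ (Lemma \ref{c1}), in fact $\gamma_z''(1) > 0$, so $z \in \mathcal{N}_{\lambda,\mu}^{+}$.

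The main obstacle I anticipate is the step forcing $t_1 = 1$: one must carefully use the precise geometry of the fibering map (monotonicity intervals of $m_z$ and $\overline m_z$ from Lemma \ref{m_u-comp}, and the location $t_1 < \tilde t$ from Lemma \ref{fib}) to rule out $t_1 < 1$ as well as $t_1 > 1$, and to justify the strict inequality $J(t_1 z) < \liminf J(t_1 z_n)$ — which genuinely requires the strict drop coming from the (assumed) failure of $\int_\Omega A$-convergence. Once strong convergence of the leading term is in hand, the rest is routine closure arguments already packaged in Lemma \ref{c1}.
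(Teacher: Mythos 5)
Your overall strategy closely tracks the paper's: use the compact embeddings to pass to the limit in $\int_\Omega K$ and $\int_\Omega Q$, show $z \neq 0$ from the negativity of the infimum on $\N^+$, produce $t_1 > 0$ with $t_1 z \in \N^+$, and argue by contradiction that failure of strong convergence forces $J(t_1 z) < \inf_{\N^+} J$. Your way of extracting $t_1>1$, namely passing to the limit in the Nehari identity to get $\gamma_z'(1)<0$ and then reading off $t_1>1$ from the sign of $\gamma_z'$ on $(0,t_1)$, is a clean variant of the paper's route (which instead shows $\liminf_{n\to\infty}\gamma_{z_n}'(t_1)>0$ and uses $\gamma_{z_n}'<0$ on $(0,1)$).

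The closing chain, however, has a directional error that prevents the contradiction from closing. You write
\begin{equation*}
J(t_1 z) < \liminf_{n\to\infty} J(t_1 z_n) \leq \liminf_{n\to\infty} J(z_n) = \inf_{\N^+} J,
\end{equation*}
and justify the middle inequality by the claim that $t=1$ minimizes $J(t z_n)$ on the $\N^+$-branch. But that minimality gives $J(t_1 z_n)\geq J(z_n)$, hence $\liminf_{n\to\infty} J(t_1 z_n) \geq \liminf_{n\to\infty} J(z_n)$ --- the opposite of what you wrote, so no contradiction follows. The correct comparison, as in the paper, places the strict drop on $J(z)$ rather than on $J(t_1 z)$: since $t_1 > 1$ and $\gamma_z$ is decreasing on $(0, t_1)$ (Lemma \ref{fib}), we have $J(t_1 z) = \gamma_z(t_1) \leq \gamma_z(1) = J(z)$; and $J(z) < \liminf_{n\to\infty} J(z_n) = \inf_{\N^+} J$ follows from the strict inequality $\int_\Omega A(z) < \liminf_{n\to\infty}\int_\Omega A(z_n)$ together with the convergence of $\int_\Omega K(z_n)$ and $\int_\Omega Q(z_n)$. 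This contradicts $t_1 z\in\N^+$. Your observation that $J(t_1 z) < \liminf_{n\to\infty} J(t_1 z_n)$ is true, but there is no useful upper bound on $\liminf_{n\to\infty} J(t_1 z_n)$; replace that link with the comparison through $J(z)$.
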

	\begin{proof}	
		In fact, up to a subsequence we have
		$$\begin{array}{rcl}
		z_n&\to& z \,\,\mbox{a.e.}\,\, \Omega,\\
		z_n&\to& z  \,\,\mbox{in}\,\, L_{\Phi_1}(\Omega)\times L_{\Phi_2}(\Omega).
		\end{array} $$
		As a consequence, using the compact embeddings $W\hookrightarrow (L^{\alpha+\beta}(\Omega))^2$ and $W\hookrightarrow (L^q(\Omega))^2$, it follows that
		$$
		\Int K(z_n)\to \Int K(z)\, \mbox{and} \,\Int Q(z_n)\to \Int Q(z).$$
		Furthermore, using the fact that $z_n\in\N^+$, we also obtain
		\begin{equation*}
		\begin{array}{rcl}
		\Int K(z_n)&=&\Fr{q(\alpha+\beta)}{\alpha+\beta-q}\Int H(z_n)-J(z_n)\Fr{q(\alpha+\beta)}{\alpha+\beta-q}\\[3ex]
		&\geq& \Fr{q(\alpha+\beta)}{\alpha+\beta-q}\left(1-\Fr{\max\{m_i\}}{\alpha+\beta}\right)\Int A(z_n)-J(z_n)\Fr{q(\alpha+\beta)}{\alpha+\beta-q}\\[3ex]
		&\geq& -J(z_n)\Fr{q(\alpha+\beta)}{\alpha+\beta-q.}
		\end{array}
		\end{equation*}
		As a consequence we mention that
		$$\Int K(z)\geq -\ds \Fr{q(\alpha+\beta)}{\alpha+\beta-q}\inf_{z\in \N^+}J(z) >0 \,\, \mbox{ and } \,\, z \,\, \mbox{is not zero}.$$
		Taking into account  Lemma \ref{fib} there exists $t_1>0$ in such way that $t_1z\in\N^+, \gamma^\prime(t_1)=0 \,\, \mbox{and}\,\, \gamma(t_1)=J(t_1z)<0$.
		Arguing by contradiction we assume that $z_n$ does not converge to $z$ in $W$. Using the same ideas explored in \cite{MLCarvalhocritico,JME} we infer that
		\begin{equation}\label{liminf0}
		\begin{array}{c}
		\Int A(z)< \liminf_{n\rightarrow \infty}\Int A(z_n)\quad{and}\quad\Int B(z)<\liminf_{n\rightarrow \infty} \Int B(z_n).
		\end{array}
		\end{equation}
		At this moment since $t_1z\in\N^+$ we also mention that
		$$0=\gamma_z^\prime(t_1)=t_1^{-1}\Int (B(t_1z)- K(t_1z) - Q(tz)).$$
		Using \eqref{liminf0} we observe that
		$$0=\gamma^\prime(t_1)<\liminf_{n\rightarrow \infty}\left(t_1^{-1}\Int (B(t_1z_n)- K(t_1z_n) - Q(tz_n) ) \right)=\liminf_{n\rightarrow \infty}\gamma_{z_n}^\prime(t_1).$$
		As a consequence there exists $n_0\in\mathbb{N}$ in such way that
		\begin{eqnarray}\label{gamma-z_n-t_1}
		\liminf_{n\rightarrow \infty}\gamma_{z_n}^\prime(t_1)>0, \ \ \ \forall n>n_0.
		\end{eqnarray}
		Using one more time that $(z_n)\subset\N^+$ and applying  Lemma \ref{fib} we obtain that $\gamma_{z_n}^\prime(t)<0$ for any $t\in(0,1)$ and $\gamma^\prime_{z_n}(1)=0$.
		Here, we observe that from \eqref{gamma-z_n-t_1} we conclude that $t_1>1$.
		
		On the other hand, using $t_1z\in \N^+,~t_1>1$ and \eqref{liminf0}, we deduce that
		$$J(t_1z)\leq J(z)<\liminf_{n\rightarrow \infty} J(z_n)=\ds\inf_{z\in\N^+} J(z).$$
		This is an absurd showing that $z_n$ converges to $z$ in $W$. This ends the proof.	
	\end{proof}

	\subsection{The first weak solution for the nonsingular case:}
	Here we emphasize that $q>1$. Now we stay in position in order to prove that any critical point for $J$ on $\N$ is a free critical point, i.e, is a critical point in the whole space $W$. According to Proposition \ref{coercive} we know that $J$ is coercive and bounded from below in $\N^+$. Let $z_n=(u_n,v_n)$ be a minimizer sequence for $J$ in $\N^+$. It is easy to see that $(z_n)$ is bounded in $W$. Up to a subsequence there exists $z\in W$ such that
	$z_n\rightharpoonup z \,\, \mbox{in } \,\, W. $
	It follows from the Proposition \ref{strong-converg} that $z_n\rightarrow z ~\mbox{in}~W.$
	In addition, the last assertion says also that
	$$J(z)=\lim_{n\rightarrow \infty} J(z_n)=\ds\inf_{z\in\N^+} J(z).$$
	Hence, applying Lemma \ref{criticalpoint}, we have that $z$ is a weak solution to the quasilinear elliptic System \eqref{eq1}. Since $J(z)=J(|z|)$ and $|z|=(|u|,|v|) \in \N^+$, we assume that $z$ is a nonnegative solution to the elliptic System \eqref{eq1}. It follows from Lemma
	\ref{no-semitriv} that $u,v\neq0$. This ends the proof.
	\hfill\cqd
	\vskip.2cm
	
	\subsection{The first weak solution for the singular case:}
	Since we are interesting in the case $q \in (0,1)$ the energy functional is not in $C^1$ class. However, using the Nehari method, we stay in position to find existence and multiplicity of solutions for the System \eqref{eq1} taking into account the behavior of the fibering maps.  The next result can be stated in the following way
	
	\begin{lem}\label{lambda-dist-0}
		Suppose $(\phi_{1}) - (\phi_{3})$ and $0<q < 1$. Let $(z_n) \in \mathcal{N}_{\lambda,\mu}^{+}$ be a minimizer sequence such that $z_n\rightharpoonup z$ in $W$. Then there exist $\eta_1>0$ and $c_{\lambda,\mu}>0$ such that  $\gamma^{\prime\prime}_{z_n}(1)>c_{\lambda,\mu}$ for all $\lambda+\mu<\eta_1$.
	\end{lem}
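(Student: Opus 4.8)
The plan is to show that along the minimizing sequence $(z_n)\subset\N^+$, the quantity $\gamma^{\prime\prime}_{z_n}(1)$ stays uniformly bounded away from $0$; the natural route is to combine the two competing lower bounds already extracted in the proof of Lemma \ref{c1}, now sharpened by the extra information that $\inf_{\N^+}J<0$. First I would recall from Remark \ref{gamma''} that for $z_n\in\N$,
$$\gamma^{\prime\prime}_{z_n}(1)=\Int D(z_n)+(2-q)B(z_n)-(\alpha+\beta-q)Q(z_n),$$
and from the Nehari identity \eqref{nehari} that $\Int B(z_n)=\Int K(z_n)+Q(z_n)$. Since $(z_n)\subset\N^+$ is a minimizing sequence for $J$ on $\N^+$, by Proposition \ref{strong-converg} we may pass to a subsequence with $z_n\to z$ in $W$ and $z\in\N^+$, and as in that proof $\Int K(z)\geq -\frac{q(\alpha+\beta)}{\alpha+\beta-q}\inf_{\N^+}J>0$; hence $\Int K(z_n)\to\Int K(z)>0$, which gives a uniform lower bound $\Int K(z_n)\geq \kappa_0>0$ for $n$ large. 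By the argument leading to \eqref{des1} (with $\gamma^{\prime\prime}_{z_n}(1)<0$ replaced by the weaker fact that $z_n\in\N^+$ forces a lower bound on $\Vert z_n\Vert$ via $\Int K(z_n)\geq\kappa_0$ and Lemma \ref{tec}-(iii)), we also obtain $\Vert z_n\Vert\geq \rho_0>0$ uniformly.

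Next I would play the two estimates against each other. On one hand, using $(\phi_3)$ and inequality \eqref{22} exactly as in \eqref{est-theta},
$$\Int D(z_n)+(2-q)B(z_n)\geq \min\{\ell_i-q\}\Int B(z_n)\geq \min\{\ell_i-q\}\,\Int K(z_n)\geq \min\{\ell_i-q\}\,\kappa_0,$$
where the middle inequality discards the nonnegative term $\Int Q(z_n)$. On the other hand, using the second line of \eqref{eq5} together with $(\phi_3)$ and $\Int B(z_n)\geq \Int K(z_n)$,
$$\gamma^{\prime\prime}_{z_n}(1)=\Int D(z_n)+(2-(\alpha+\beta))B(z_n)-(q-(\alpha+\beta))K(z_n)\geq \Big[\min\{\ell_i\}(\alpha+\beta-\max\{m_i\})-0\Big]\,\mbox{(something)};$$
more precisely, exploiting $\Int D(z_n)\geq \min\{\ell_i-2\}\Int B(z_n)$ and then replacing $\Int B(z_n)$ by $\Int K(z_n)+Q(z_n)$, one isolates a positive multiple of $\Int K(z_n)$ minus a controlled multiple of $\Int Q(z_n)$, and the latter is absorbed using $Q(z_n)\leq S^{\alpha+\beta}\Vert z_n\Vert^{\alpha+\beta}$ from Lemma \ref{tec}-(ii) together with the smallness condition $\lambda+\mu<\eta_1$ from \eqref{eta1}, which bounds $\Vert z_n\Vert$ from above along the minimizing sequence (coercivity, Proposition \ref{coercive}) so that the $Q$-term cannot overwhelm the $K$-term. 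Combining, $\gamma^{\prime\prime}_{z_n}(1)\geq c_{\lambda,\mu}>0$ with $c_{\lambda,\mu}$ depending only on $\kappa_0$, $\rho_0$, the structural constants $\ell_i,m_i,\alpha,\beta,q$, and $\eta_1$.

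The step I expect to be the main obstacle is making the absorption in the last display uniform in $n$: one must ensure simultaneously a uniform positive lower bound on $\Int K(z_n)$ (which comes from $\inf_{\N^+}J<0$ via Proposition \ref{strong-converg}) and a uniform upper bound on $\Int Q(z_n)$, hence on $\Vert z_n\Vert$, along the whole minimizing sequence; the coercivity of $J$ on $\N$ (Proposition \ref{coercive}) plus the fact that $J(z_n)\to\inf_{\N^+}J$ is bounded gives the latter, so the two bounds are compatible precisely when $\lambda+\mu<\eta_1$. Once both are in hand, the conclusion $\gamma^{\prime\prime}_{z_n}(1)>c_{\lambda,\mu}$ is routine. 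Note also that this immediately re-proves, along the sequence, that $z_n$ cannot escape to $\N^0$, consistent with $\N^0=\emptyset$ from Lemma \ref{c1}.
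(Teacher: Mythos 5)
Your plan diverges from the paper's, and the specific estimate on which it hinges has a genuine gap. The paper proves the lemma by a self-contained contradiction argument, parallel to the proof that $\mathcal{N}^0_{\lambda,\mu}=\emptyset$ in Lemma \ref{c1}: assuming $\gamma''_{z_n}(1)=o_n(1)$, the two displays \eqref{des1} and \eqref{des2} are rerun with $o_n(1)$ error terms, producing a lower bound $\|z_n\|\geq C_1+o_n(1)$ from the $Q$-estimate and an upper bound $\|z_n\|^{\theta_i-q}\leq C_2(\lambda+\mu)+o_n(1)$ from the $K$-estimate, and these are incompatible once $\lambda+\mu<\eta_1$. It does not invoke Proposition \ref{strong-converg} at all.

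You instead invoke Proposition \ref{strong-converg} to get $z_n\to z$ strongly with $z\in\mathcal{N}^+_{\lambda,\mu}$ and $\Int K(z)>0$, and that much is fine (the proposition is proved before this lemma and does not depend on it). But the step where you "play the two estimates against each other" does not close. What you can extract is, using the second form of $\gamma''_{z_n}(1)$ in \eqref{eq5}, $(\phi_3)$, and the Nehari identity,
$$\gamma''_{z_n}(1)\geq (\min\{\ell_i\}-q)\Int K(z_n)-(\alpha+\beta-\min\{\ell_i\})\Int Q(z_n),$$
and a uniform lower bound $\Int K(z_n)\geq\kappa_0$ together with a uniform upper bound on $\Int Q(z_n)$ from coercivity. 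Nothing in the hypotheses forces the right-hand side to be positive: you need $\Int Q(z_n)$ to be small relative to $\Int K(z_n)$, not merely bounded, and no estimate you cite supplies that. The constants $\kappa_0$ and the bound on $\|z_n\|$ both come from $\inf_{\mathcal{N}^+}J$ and scale together with $\lambda+\mu$; there is no cancellation that makes the difference uniformly positive. So the absorption step is a gap, not just an omission.

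The irony is that once you have committed to Proposition \ref{strong-converg}, you already hold a much shorter argument and do not need these estimates at all: $z_n\to z$ in $W$, $z\in\mathcal{N}^+_{\lambda,\mu}$ gives $\gamma''_z(1)>0$, and the paper has already asserted (in Section \ref{sec-nehari}) that $w\mapsto J''(w)(w,w)$ is continuous on $W$, hence $\gamma''_{z_n}(1)\to\gamma''_z(1)>0$, and taking $c_{\lambda,\mu}=\tfrac12\gamma''_z(1)$ (adjusting for the finitely many initial indices, along which $\gamma''_{z_n}(1)>0$ since $z_n\in\mathcal{N}^+_{\lambda,\mu}$) finishes the proof. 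If you want to follow your route, replace the absorption estimates by this continuity argument; if you want a proof that is independent of Proposition \ref{strong-converg}, follow the paper and argue by contradiction exactly as in Lemma \ref{c1}, carrying $o_n(1)$ errors through \eqref{des1}--\eqref{des2}.
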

	\begin{proof}
		Arguing by contradiction we assume that set $\gamma^{\prime\prime}_{z_n}(1)=o_n(1)$. Using the same ideas explored in \eqref{des1} we obtain
		\begin{equation}\label{des11}
		||z_n||\geq \left[\Fr{A_1\min\{\ell_i(\ell_i-q)}{(\alpha+\beta-q)S^{\alpha+\beta}}\right]^{\frac{1}{\alpha+\beta-\theta_i}}+o_n(1).
		\end{equation}
		
		On the other hand, taking into account \eqref{eq5}, Remark \ref{conseqphi3}, \eqref{desK-1} or \eqref{desK-2}, we use the H\"older inequality (for Orlicz-Sobolev space), we obtaining the following estimates
		\begin{eqnarray}
		\Fr{A_1\min\{\ell_i(\alpha+\beta-m_i)\}}{(\alpha+\beta-q)R}||z_n||^{\theta_i} &\leq &\Int ((\alpha+\beta-m_1)\phi_1(|\nabla u_n| )|\nabla u_n|^2+(\alpha+\beta-m_2)\phi_2(|\nabla u_n| )|\nabla u_n|^2)\nonumber\\
		&\leq&(\lambda+\mu)(\alpha+\beta-q)R||z_n||^{q} +o_n(1).\nonumber
		\end{eqnarray}
		Here we emphasize that $R$ is given by Lemma \ref{tec}-(iii) where $0<q<1$. Hence, using the last assertion together with \eqref{des11} we obtain that
		\begin{equation}\label{des12}
		\Fr{A_1\min\{\ell_i(\alpha+\beta-m_i)\}}{(\alpha+\beta-q)R}||z_n||^{\theta_i-q}
		\leq (\lambda+\mu)+{o_n(1)}.
		\end{equation}
		Under these conditions, using \eqref{des11} and \eqref{des12}, we get a contradiction for any $\lambda$ and $\mu$ satisfying
		\begin{equation*}
		(\lambda+\mu)<\min_{i=1,2}\left\{ \left[\Fr{A_1\min\{\ell_i(\ell_i-q)}{(\alpha+\beta-q)S^{\alpha+\beta}}\right]^
		{\frac{\theta_i-q}{\alpha+\beta-\theta_i}}\left[\Fr{A_1\min\{\ell_i(\alpha+\beta-m_i)}{(\el-q)R}\right]\right\}=:\eta_1.
		\end{equation*}
		Here we recall that $\eta_1>0$ is given by Lemma \ref{c1}. This ends the proof.
	\end{proof}
	\begin{rmk}\label{limites}
		In the proof of Theorem \ref{teorema2} we shall use the following facts: Let $(z_n)\subset \N$ be a sequence, $w=(\varphi_1,\varphi_2)\in W$ and  $g_n,R_i:(0,\infty)\rightarrow \mathbb{R},~i=1,2,3,4,$ are functions given by
		$$R_1(t):=\int_\Omega B(g_n(t)(z_n+tw))-B(z_n),\qquad R_2(t):=\int_\Omega [g_n(t)]^{q}K(z_n+tw)-K(z_n),$$
		$$R_3(t):=\int_\Omega[g_n(t)]^{\alpha+\beta}Q((z_n+tw))-Q(z_n),\qquad R_4(t):=\int_{\Omega}A(z_n)-A(g_n(t) (z_n+tw)),$$
		and  $g_n(t):=f_n(tw),$ where $f_n$ was defined in Lemma \ref{proj}. Then we obtain the following limits:
		\begin{enumerate}
			\item It holds that
			\begin{eqnarray*}
				\lim_{t\rightarrow 0_+}\frac{R_1(t)}{t}= g_{n}^\prime(0)\int_{\Omega}D(z_n)+2B(z_n)+C(z_n)w+A^\prime(z_n)w
			\end{eqnarray*}
			\nd Here was used the derivative $t\mapsto \int_{\Omega}B(g_n(t)(z_n+tw_n))$ at the origin;
			\item $\displaystyle\lim_{t\rightarrow 0_+}\frac{R_2(t)}{t}=qg^\prime_{n}(0)\Int K(z_n)+\liminf_{t\rightarrow 0^+} \int_{\Omega}\left[\lambda a(x)\Fr{|u_n+t \varphi_1|^q-|u_n|^q}{t}+\mu c(x)\Fr{|v_n+t \varphi_2|^q-|v_n|^q}{t}\right]$;
			\item $\displaystyle\lim_{t\rightarrow 0_+}\frac{R_3(t)}{t}=\int_\Omega(\alpha+\beta) g_{n}^\prime(0)Q(z_n)+Q^\prime(z_n)w$;
			\item $\displaystyle\lim_{t\rightarrow 0_+}\frac{R_4(t)}{t} = -\int_\Omega g_n^\prime(0)B(z_n)+A^\prime(z_n)w$.
		\end{enumerate}
		Here, we observe that $g^\prime_{n}(0)\in[-\infty,\infty]$ is understood as the right derivative of $g_n$ at $t=0$.	
	\end{rmk}
	From now on, we shall apply Ekeland's variational principle in order to find a minimizer for the energy functional $J$. The Ekeland's variational principle implies that there exists a minimizer sequence $(z_n) \in \mathcal{N}_{\lambda,\mu}^{+}$ satisfying
	\begin{itemize}
		\item[(i)] $J(z_n)< \displaystyle\inf_{\N^+}J+\frac{1}{n}$;
		\item[(ii)] $J(w)\geq J(z_n)-\frac{1}{n}\|w-z_n\|,~\forall w\in \N^+$.
	\end{itemize}
	The main idea here is to apply Lemma \ref{proj}. In order to do that we take $g_n(t):=f_n(tw),~w=(\varphi_1,\varphi_2)$ and $t>0$ small enough such that  $g_n(t)(z_n+tw)\in \N^+$. Notice that $g_n(0)=1$. Using the last assertion we see that
	\begin{eqnarray}\label{nehari1}
	0=\gamma^\prime_{g_n(t)(z_n+tw)}(1)-\gamma^\prime_{z_n}(1)=R_1(t)-R_2(t)-R_3(t),
	\end{eqnarray}
	where $R_i,~i=1,2,3$, were defined in the Remark \ref{limites}.
	Putting all these fact together with \eqref{nehari1} we also have
	\begin{eqnarray}\label{g_n-1}
	0&\leq& g^\prime_n(0)\left\{\int_{\Omega} D(z_n)+(2-q)B(z_n)-(\alpha+\beta-q)Q(z_n) \right\}+\int_{\Omega}C(z_n)w+A^\prime(z_n)w-Q^\prime(z_n)w \nonumber \\
	&=&g^\prime_n(0)\gamma_{z_n}^{\prime\prime}(1)+\int_{\Omega}C(z_n)w+A^\prime(z_n)w-Q^\prime(z_n). 
	\end{eqnarray}
	Without any loss of generality we assume that $g_n^\prime(0)$ is well defined, see \cite{yijing}.
	\vskip.2cm
	\nd {\bf Claim:} $g_n^\prime(0)\neq \pm\infty$. In fact, using $z_n\in\N^+$, it follows from \eqref{g_n-1} that $g_n^\prime(0)\neq -\infty$.
	
	Now, we shall prove that $g_n^\prime(0)\neq \infty$. The proof for this claim follows arguing by contradiction. Assuming that $g_n^\prime(0)=\infty$ we obtain for each $t>0$ small enough that $g_n(t)>1$. Therefore, taking into account the item $(ii)$ from Ekeland's variational principle, we also infer that
	\begin{eqnarray}\label{g_n-2}
	[g_n(t)-1]\frac{\|z_n\|}{n}+tg_n(t)\frac{\|\varphi\|}{n}&\geq& J(z_n)-J(g_n(t)(z_n+tw))=R_4(t)-\Fr{1}{q}R_2(t)-\Fr{1}{\alpha+\beta}R_3(t).
	\end{eqnarray}
	As a consequence, taking the limit $t\rightarrow 0_+$ and using the fact that $z_n\in \N$, we obtain
	\begin{eqnarray}\label{Des-11}
	\frac{\|\varphi\|}{n}&\geq& \frac{g^\prime_{n}(0)}{q}\left[\gamma_{z_n}^{\prime\prime}(1)-q\frac{\|z_n\|}{n}\right]-\frac{1}{q}\int_\Omega(q-1)A^\prime(z_n)w-C(z_n)w+
	\frac{\alpha+\beta-q}{\alpha+\beta}Q^\prime(z_n)w.
	\end{eqnarray}
	Here, we were used the terms $\displaystyle\lim_{t\rightarrow 0^+}\frac{R_i(t)}{t},~i=2,3,4,$ given in the Remark \ref{limites}. Due to the  Lemma \ref{lambda-dist-0} we have that $\gamma_{z_n}^{\prime\prime}(1)> c_{_{\lambda,\mu}}>0$. So, the assertion \eqref{Des-11} is impossible if $g_n^\prime(0)=\infty$. This proves the claim just above.

	At this stage we shall prove that $z$  is in $\N^+$. Moreover, we mention that $z$ is a weak solution to the quasilinear elliptic System \eqref{eq1}.
	First of all, using {(ii)} from Ekeland's variational principle and \eqref{g_n-2}, we infer that
	\begin{eqnarray}\label{cg_n-2}
	[g_n(t)-1]\frac{\|z_n\|}{n}+tg_n(t)\frac{\|\varphi\|}{n}&\geq& J(z_n)-J(g_n(t)(z_n+tw))=R_4(t)-\Fr{1}{q}R_2(t)-\Fr{1}{\alpha+\beta}R_3(t).
	\end{eqnarray}
	In this way, dividing \eqref{cg_n-2} by $t>0$, taking the limit $t\to 0^+$ and using  the terms $\displaystyle\lim_{t\to 0^+}\Fr{R_2(t)}{t}$ and $\displaystyle\lim_{t\to 0^+}\Fr{R_4(t)}{t}$ which were obtained in the Remark \ref{limites}, we also see that
	\begin{equation*}
	\begin{array}{rcl}
	\Fr{1}{n}\left[g^\prime_{n}(0)||z_n||+||\varphi||\right]&\geq& -g^\prime_{n}(0)\Int [B(z_n)-K(z_n)-Q(z_n)]+\int_\Omega\Fr{1}{\alpha+\beta}Q^\prime(z_n)w-A^\prime(z_n)w\\[2ex]
	&+&\Fr{1}{q}\liminf_{t\rightarrow 0^+}\int_{\Omega} \left[\lambda a(x)\Fr{|u_n+t \varphi_1|^q-|u_n|^q}{t}+\mu c(x)\Fr{|v_n+t \varphi_2|^q-|v_n|^q}{t}\right].
	\end{array}
	\end{equation*}
	Now, using the fact that $z_n\in \N$, we mention that
	\begin{equation}\label{liminf1}
	\begin{array}{rcl}
	\Fr{1}{n}\left[g^\prime_{n}(0)||z_n||+||\varphi||\right]&\geq&
	\displaystyle\int_\Omega\Fr{1}{\alpha+\beta}Q^\prime(z_n)w-A^\prime(z_n)w\\
	&+&\Fr{1}{q}\liminf_{t\rightarrow 0^+} \int_{\Omega}\left[\lambda a(x)\Fr{|u_n+t \varphi_1|^q-|u_n|^q}{t}+\mu c(x)\Fr{|v_n+t \varphi_2|^q-|v_n|^q}{t}\right].\\
	\end{array}
	\end{equation}
	In addition, we also mention that
	$$\left[\lambda a(x)\Fr{|u_n+t \varphi_1|^q-|u_n|^q}{t}+\mu c(x)\Fr{|v_n+t \varphi_2|^q-|v_n|^q}{t}\right]\geq 0$$
	holds for any $x\in \Omega$ and $t>0$. Using the Fatou's Lemma it follows that
	\begin{equation}\label{fatou}
	\frac 1 q\Int K^\prime(z_n)w\leq \Fr{1}{q}\liminf_{t\rightarrow 0^+} \int_{\Omega}\left[\lambda a(x)\Fr{|u_n+t \varphi_1|^q-|u_n|^q}{t}+\mu c(x)\Fr{|v_n+t \varphi_2|^q-|v_n|^q}{t}\right].
	\end{equation}
	Under these conditions, using \eqref{liminf1} and \eqref{fatou}, taking into account also that $(z_n)$ and $(g^\prime_{n}(0))$ are bounded sequences, we deduce that
	\begin{equation*}\begin{array}{rcl}
	\displaystyle\frac 1 q\Int K^\prime(z_n)w&\leq &\Fr{1}{n}\left[g^\prime_{n}(0)||z_n||+||\varphi||\right]+\Int A^\prime(z_n)w-\Fr{1}{\alpha+\beta}Q^\prime(z_n)w\\[2ex]
	&\leq&\Fr{(C+||\varphi||)}{n}+\Int A^\prime(z_n)w-\Fr{1}{\alpha+\beta}Q^\prime(z_n)w
	\end{array}
	\end{equation*}
	It follows from Proposition \ref{strong-converg} that $z_n\rightarrow z$. Hence we infer also that
	$$\begin{array}{rcl}
	\displaystyle\frac{1}{q}\liminf_{n\to\infty}\Int K^\prime(z_n)w &\leq& \Int A^\prime(z)w
	-\Fr{1}{\alpha+\beta}\Int Q^\prime(z)w.
	\end{array}$$
	Using one more time Fatou's Lemma we mention that
	$$\begin{array}{rcl}
	\displaystyle\frac{1}{q}\Int K^\prime(z)w &\leq& \Int A^\prime(z)w
	-\Fr{1}{\alpha+\beta}\Int Q^\prime(z)w
	\end{array}$$
	Notice that, for each $\varphi=(e_1,e_2)\in W\cap (C^1(\Omega)\times C^1(\Omega))$ satisfying $\varphi >0$, we obtain
	$$\Fr{1}{q}\Int K^\prime(z)\varphi<\infty.$$
	Hence $z>0$ a.e. in $\Omega$. As a consequence $0<z\in W$ satisfies the following estimate
	\begin{equation}\label{cdes1}
	\begin{array}{rcl}
	\displaystyle \Int A^\prime(z)\varphi-\Fr{1}{q}\Int K^\prime(z)\varphi
	-\Fr{1}{\alpha+\beta}\Int Q^\prime(z)\varphi\geq 0,
	\end{array}
	\end{equation}
	holds true $\forall \varphi\in W$ satisfying $\varphi>0$.
	
	It remains to prove that $z$ is a nonnegative weak solution for the quasilinear elliptic System \eqref{eq1}. The main to tool here is to consider \eqref{cdes1} together the ideas discussed in \cite{yijing2011,yijing2008}. Consider $\varphi=(\varphi_1,\varphi_2)\in W$ be a fixed function and $\epsilon>0$. Define $\psi=(\psi_1,\psi_2)\in W,\ \psi\geq 0$ given by
	$$\psi=(\psi_1,\psi_2):=([u+\epsilon\varphi_1]^+,[v+\epsilon\varphi_2]^+).$$
	For our purpose we need to consider the set
	$$\Omega_\epsilon:=\{x\in \Omega: z+\epsilon\varphi>0\}.$$
	Using $\psi$ as a test function in \eqref{cdes1} together with the fact that $z\in\N,$ we infer that
	$$\begin{array}{rcl}
	0&\leq&\displaystyle\int_{\Omega_\epsilon} A^\prime(z)(z+\epsilon\varphi)-\displaystyle\frac{1}{q}\int_{\Omega_\epsilon}K^\prime(z)(z+\epsilon\varphi)
	-\Fr{1}{\alpha+\beta}\displaystyle\int_{\Omega_\epsilon} Q^\prime(z)(z+\epsilon\varphi)\\[4ex]
	&=&\left(\Int-\displaystyle\int_{\Omega\setminus\Omega_\epsilon} \right)\left[ A^\prime(z)(z+\epsilon\varphi)
	-
	\Fr{1}{q}K^\prime(z)(z+\epsilon\varphi)
	-\Fr{1}{\alpha+\beta} Q^\prime(z)(z+\epsilon\varphi)\right].
	\end{array}$$
	
	As a consequence, using the fact that $z$ belongs to the Nehari manifold, we also have
	$$\begin{array}{c}
	0\leq \epsilon \Int\left[ A^\prime(z)\varphi-\Fr{1}{q} K\prime(z)\varphi
	-\Fr{1}{\alpha+\beta} Q^\prime(z)\varphi\right]-\displaystyle\int_{\Omega\setminus\Omega_\epsilon}\left[A^\prime(z)(z+\epsilon\varphi)
	-
	\displaystyle\frac{1}{q}  K^\prime(z)(z+\epsilon\varphi)-\Fr{1}{\alpha+\beta} Q^\prime(z)(z+\epsilon\varphi)\right]
	\end{array}$$
	Consequently, the last estimates imply that
	$$\begin{array}{rcl}
	0&\leq& \epsilon \Int \left[A^\prime(z)\varphi-\Fr{1}{q} K^\prime(z)\varphi
	-\Fr{1}{\alpha+\beta} Q^\prime(z)\varphi\right]-\epsilon\displaystyle\int_{\Omega\setminus\Omega_\epsilon} \left[A^\prime(z)\varphi-\Fr{1}{q} K^\prime(z)\varphi
	-\Fr{1}{\alpha+\beta} Q^\prime(z)\varphi\right].
	\end{array}$$
	It is worthwhile to mention that in $\displaystyle\Omega\setminus\Omega_\epsilon$ we obtain $\varphi_1, \varphi_2<0$. This fact implies also that
	$$\begin{array}{rcl}
	0&\leq& \epsilon \Int \left[A^\prime(z)\varphi-\frac{1}{q}K^\prime(z)\varphi-\Fr{1}{\alpha+\beta} Q^\prime(z)\varphi\right]
	-\epsilon\int_{\Omega\setminus\Omega_\epsilon} A^\prime(z)\varphi.
	
	\\[4ex]
	\end{array}$$
	At this moment we observe that $\lim_{\epsilon\to 0^+} |\{\Omega\setminus\Omega_\epsilon\}|=0$. As a product we obtain
	$$\displaystyle\lim_{\epsilon\to 0^+}\displaystyle\int_{\Omega\setminus\Omega_\epsilon} A^\prime(z)\varphi=0.$$
	Now, dividing the last expression by $\epsilon>0$ and taking the limit we also obtain that
	$$\begin{array}{rcl}
	\Int \left[A^\prime(z)\varphi-\frac{1}{q}K^\prime(z)\varphi-\frac{1}{\alpha+\beta}Q^\prime(z)\varphi\right]\geq 0
	\end{array}.
	$$
	At this moment, using the test function $-\varphi$ instead of $\varphi$, we infer that
	$$\begin{array}{rcl}
	\Int A^\prime(z)\varphi &=&\Int \left[\frac{1}{q}K^\prime(z)\varphi+\frac{1}{\alpha+\beta}Q^\prime(z)\varphi\right]
	\end{array}.
	$$
	In other words, we have been showed that $z$  is a positive weak solution to the elliptic System \eqref{eq1}. In particular, we have also that $z\in \N$. Actually, we also mention that $z\in \N^+$ which can be proved arguing by contradiction.
	
	Now we observe that $\displaystyle\lim_{\lambda,\mu\to 0^+}||z_{\lambda, \mu}||=0$. Indeed, since $z_{\lambda,\mu}\in\N^+$ and arguing as in the proof of Lemma \ref{c1} we infer that
	\begin{equation*}
	||z_{\lambda, \mu}||^{\theta_i-q} \leq \Fr{(\lambda+\mu)}{\rho} \,\, \mbox{where} \,\, \rho =\dfrac{A_1\min\{\ell_i(\alpha+\beta-m_i)}{(\alpha+\beta-q)R}.
	\end{equation*}
The same argument works also in the nonsingular case, that is,  $\displaystyle\lim_{\lambda,\mu\to 0^+}||z_{\lambda, \mu}||=0$ holds true both in nonsingular case and singular case. This is a powerful tool to consider the asymptotic behavior for the solutions
	$z_{\lambda, \mu}$ which is used in the proof of our main results. 
	
	\subsection{Some proprieties for the singular and nonsingular case}
	
	\begin{lem}\label{no-semitriv-}
		Suppose $(\phi_{1}) - (\phi_{3})$ and $0<q<1$ or $q > 1$. Let $z\in \N^-$ a weak solution of System \eqref{eq1}. Then $z$ is not the weak semitrivial solution.
	\end{lem}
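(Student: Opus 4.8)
\noindent\emph{Proof sketch.} The plan is to argue by contradiction using only the defining sign condition of $\N^-$; the point is that no semitrivial element of $\N$ can satisfy $\gamma^{\prime\prime}_z(1)<0$. Suppose, on the contrary, that the weak solution $z\in\N^-$ is semitrivial. Without loss of generality I assume $z=(u,0)$ with $u\in W_0^{1,\Phi_1}(\Omega)$, $u\neq0$; the case $z=(0,v)$ is identical after interchanging the roles of the two components, i.e. replacing $(\Phi_1,u,a,\ell_1)$ by $(\Phi_2,v,c,\ell_2)$.

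First I would record the simplifications forced by $v\equiv0$. Since $\alpha,\beta>1$ one has $Q(z)=b(x)|u|^\alpha|0|^\beta=0$, while $K(z)=\lambda a(x)|u|^q$, $B(z)=\phi_1(|\nabla u|)|\nabla u|^2$ and $D(z)=\phi_1^\prime(|\nabla u|)|\nabla u|^3$. Because $z\in\N^-\subset\N$, the Nehari identity \eqref{nehari} gives $\Int B(z)=\Int K(z)$, and in particular $\Int B(z)>0$ since $u\neq0$ and $\phi_1>0$ on $(0,\infty)$.

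Next I would use identity \eqref{eq5} of Remark \ref{gamma''} — valid at every point of $\N$ and not requiring $J$ to be $C^2$, since $z\mapsto\gamma^{\prime\prime}_z(1)$ is well defined pointwise under $(\phi_3)$ — which, together with $Q(z)=0$, yields
$$\gamma^{\prime\prime}_z(1)=\Int D(z)+(2-q)B(z).$$
Invoking \eqref{22}, a consequence of $(\phi_3)$, in the form $\phi_1^\prime(t)t\geq(\ell_1-2)\phi_1(t)$ gives $\Int D(z)\geq(\ell_1-2)\Int B(z)$, hence
$$\gamma^{\prime\prime}_z(1)\ \geq\ (\ell_1-q)\Int B(z)\ >\ 0,$$
where I used $q<\min\{\ell_i\}\leq\ell_1$ from $(H)$ together with $\Int B(z)>0$. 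This contradicts $z\in\N^-$, which by definition means $\gamma^{\prime\prime}_z(1)<0$. Therefore $z$ cannot be semitrivial.

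I do not expect a genuine obstacle here: unlike the proof of Lemma \ref{no-semitriv} for $\N^+$ — where semitrivial points actually do lie in $\N^+$ and one must construct an auxiliary competing pair $(u,w_2)$ of strictly smaller energy via the scalar problems \eqref{aux-2}--\eqref{aux-4} — the set $\N^-$ contains no semitrivial points at all, so the manifold bookkeeping alone does the job. The only mild points of care are: (i) confirming that the pointwise identity \eqref{eq5} for $\gamma^{\prime\prime}_z(1)$ and the estimate \eqref{22} remain legitimate in the singular regime $0<q<1$ (they do, since both are purely algebraic consequences of $(\phi_1)$--$(\phi_3)$ and the integrand defining $\gamma^{\prime\prime}_z(1)$ is integrable by \eqref{22} and Hölder's inequality); and (ii) transcribing the symmetric case $z=(0,v)$ verbatim with the second Orlicz structure.
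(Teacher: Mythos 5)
Your argument is correct, and it is genuinely different from the paper's. You observe that a semitrivial $z=(u,0)\in\N$ has $\Int Q(z)=0$ (because $\beta>1$), so by the identity \eqref{eq5} (which requires only $z\in\N$, not any smoothness of $J$)
$$\gamma^{\prime\prime}_z(1)=\Int D(z)+(2-q)B(z)\ \geq\ (\ell_1-q)\Int B(z)\ >\ 0,$$
using \eqref{22} and $q<\ell_1$. Thus a semitrivial element of $\N$ always lies in $\N^+$, and in particular not in $\N^-$. The paper instead goes through energy levels: from Lemma \ref{nehari-} one has $J(z)\geq\delta_1>0$ on $\N^-$ (for $\lambda+\mu$ small), whereas the identity $\Int\phi_1(|\nabla u|)|\nabla u|^2=\lambda\Int a(x)|u|^q$ (from the Nehari relation, equivalently from being a weak scalar solution) combined with $\phi_1(t)t^2\geq\ell_1\Phi_1(t)$ gives $J(u,0)\leq(1-\ell_1/q)\Int\Phi_1(|\nabla u|)<0$, a contradiction. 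Your route is lighter: it does not invoke Lemma \ref{nehari-} at all, does not need $z$ to be a weak solution (membership in $\N$ suffices), and does not need $\lambda+\mu$ small. The paper's route fits more naturally into the chain of energy estimates it has already developed and records the quantitative fact that semitrivial elements have negative energy, which is reused in Lemma \ref{no-semitriv} for the $\N^+$ case, where the fibering-derivative argument genuinely fails (there semitrivial points really do lie in $\N^+$ and one must build a competitor). Both proofs are valid.
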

	\begin{proof}
		Note that $z$ is a solution of System \eqref{eq1}. Furthermore, using Lemma \ref{nehari-}, we obtain that $J(z)>\delta_1>0$.
		Now we claim that $z$ is not semitrivial, i.e, we have that $z \neq (u, 0)$. In fact, arguing by contradiction, we assume that $z=(u,0)$. As a consequence $z$ is a weak solution to the elliptic Problem \eqref{aux-2}. Hence $0<\Int \phi_1(|\nabla u|)|\nabla u|^2=\Int a(x)|u|^q$ proving that
		\begin{eqnarray}
		J(u,0)&=&\Int \Phi_1(|\nabla u|)-\frac{1}{q}a(x)|u|^q\nonumber\\
		&=& \Int \Phi_1(|\nabla u|)-\frac{1}{q}\phi_1(|\nabla u|)|\nabla u|^2\nonumber\\
		&\leq &\left(1-\frac{\ell_1}{q}\right)\Int \Phi_1(|\nabla u|)<0.\nonumber
		\end{eqnarray}
		This is a contradiction. Similarly, we see also that $z\neq (0,v)$.
	\end{proof}

	\begin{prop}\label{strong-converg-}
		Suppose $(\phi_{1}) - (\phi_{3})$ and $0<q < 1$ or $q>1$. Let $(z_n) \in \mathcal{N}_{\lambda,\mu}^{-}$ be a minimizer sequence such that $z_n\rightharpoonup z$ in $W$.
		Then $z_n\rightarrow z$ and $z \in \mathcal{N}_{\lambda,\mu}^{-}$ for all $\lambda+\mu<\eta_1$.
	\end{prop}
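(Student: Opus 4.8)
The plan is to follow the scheme of Proposition~\ref{strong-converg}, interchanging the local minima and local maxima of the fibering map and replacing $\N^{+}$ by $\N^{-}$. First, since $z_n\rightharpoonup z$ in $W$ and the embeddings $W\hookrightarrow(L^{\alpha+\beta}(\Omega))^{2}$ and $W\hookrightarrow(L^{q}(\Omega))^{2}$ are compact, one has $\Int K(z_n)\to\Int K(z)$ and $\Int Q(z_n)\to\Int Q(z)$. Because $z_n\in\N^{-}$ we have $\gamma_{z_n}^{\prime\prime}(1)<0$, and arguing exactly as in the proof of Lemma~\ref{c1} (through \eqref{eq5}, \eqref{22} and Proposition~\ref{estimativanorma}) this gives $(\alpha+\beta-q)\Int Q(z_n)>A_1\min\{\ell_i(\ell_i-q)\}\|z_n\|^{\theta_i}$; combined with the lower bound $\|z_n\|\geq C_{0}>0$ valid on $\N^{-}$ (cf.\ \eqref{des1} and Lemma~\ref{nehari-}) this forces $\Int Q(z_n)\geq c_{0}>0$ for every $n$, hence $\Int Q(z)\geq c_{0}>0$ and in particular $z\neq 0$ (alternatively, $z=0$ would give $\Int B(z_n)=\Int K(z_n)+Q(z_n)\to 0$, whence $\|z_n\|\to 0$ by \eqref{22} and Proposition~\ref{estimativanorma}, contradicting $\|z_n\|\geq C_{0}$).

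Next I would use Lemma~\ref{fib}: since $\Int Q(z)>0$ and $\lambda+\mu<\eta_1$, its item $(2)$ (when $\Int K(z)\leq 0$) or item $(3)$ (when $\Int K(z)>0$) provides a unique $t_{*}>0$ with $t_{*}z\in\N^{-}$, so that $\gamma_{z}^{\prime}(t_{*})=0$. Moreover $\Int Q(z_n)>0$ for every $n$, so by the same lemma the map $\gamma_{z_n}$ is either increasing on $(0,1)$ and decreasing on $(1,\infty)$, or it has a local minimum at some $t_{1}^{(n)}<1$, is increasing on $(t_{1}^{(n)},1)$ and decreasing on $(1,\infty)$. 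In both cases $\gamma_{z_n}^{\prime}(t)<0$ for $t>1$, and $\gamma_{z_n}^{\prime}>0$ only on an interval whose right endpoint is $1$; hence $\gamma_{z_n}^{\prime}(s)>0$ forces $s<1$ and $\gamma_{z_n}$ increasing on $[s,1]$.

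For the conclusion I would argue by contradiction: suppose $z_n\not\to z$ in $W$. Then, as in \eqref{liminf0} (see \cite{MLCarvalhocritico,JME}), for every fixed $t>0$ one has $\Int A(tz)<\liminf_{n}\Int A(tz_n)$ and $\Int B(tz)<\liminf_{n}\Int B(tz_n)$; together with the convergence of the $K$- and $Q$-terms this yields $0=\gamma_{z}^{\prime}(t_{*})<\liminf_{n}\gamma_{z_n}^{\prime}(t_{*})$, so $\gamma_{z_n}^{\prime}(t_{*})>0$ for all large $n$. By the previous paragraph this forces $t_{*}<1$ with $\gamma_{z_n}$ increasing on $[t_{*},1]$, and therefore $J(t_{*}z_n)=\gamma_{z_n}(t_{*})<\gamma_{z_n}(1)=J(z_n)$ for all such $n$. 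Using strict lower semicontinuity once more, $J(t_{*}z)<\liminf_{n}J(t_{*}z_n)\leq\liminf_{n}J(z_n)=\inf_{\N^{-}}J$, which contradicts $t_{*}z\in\N^{-}$. Hence $z_n\to z$ in $W$; then $\gamma_{z}^{\prime}(1)=\lim_{n}\gamma_{z_n}^{\prime}(1)=0$ and $\gamma_{z}^{\prime\prime}(1)=\lim_{n}\gamma_{z_n}^{\prime\prime}(1)\leq 0$, and since $\N^{0}=\emptyset$ by Lemma~\ref{c1} we get $\gamma_{z}^{\prime\prime}(1)<0$, i.e.\ $z\in\N^{-}$, with $J(z)=\lim_{n}J(z_n)=\inf_{\N^{-}}J$.

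The step I expect to be the main obstacle is the strict weak lower semicontinuity of the non-quadratic functionals $z\mapsto\Int A(z)$ and $z\mapsto\Int B(z)$ along a sequence converging weakly but not strongly: this is where the convexity of $\Phi_i$ and the strict monotonicity estimates for the $(\Phi_{1},\Phi_{2})$-Laplacian are needed, exactly as in Proposition~\ref{strong-converg}. A secondary difficulty, which does not arise in the $\N^{+}$ case, is that one cannot simply read off ``$t_{*}>1$'': one has to exploit that $z_n$ sits at the local maximum of its own fiber, in order to obtain simultaneously $t_{*}<1$ and $\gamma_{z_n}(t_{*})<\gamma_{z_n}(1)$.
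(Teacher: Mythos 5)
Your proof is correct and follows essentially the same scheme as the paper's: show $\Int Q(z)>0$, produce $t_{*}$ with $t_{*}z\in\N^{-}$, and run a contradiction argument driven by the strict weak lower semicontinuity of $\Int A$. The only notable departures are cosmetic. You establish $\Int Q(z)>0$ by pushing the $\gamma_{z_n}^{\prime\prime}(1)<0$ inequality through Lemma~\ref{c1} and combining it with the uniform lower bound $\|z_n\|\geq C_0$, whereas the paper instead bounds $J(z_n)\leq\bigl(1-\tfrac{\min\{\ell_i\}}{q}\bigr)\Int A(z_n)+\bigl(\tfrac1q-\tfrac1{\alpha+\beta}\bigr)\Int Q(z_n)$ and invokes $J^->0$; both are valid. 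Likewise, to get $J(t_{*}z_n)\leq J(z_n)$ you first prove $\gamma_{z_n}^{\prime}(t_{*})>0$, deduce $t_{*}<1$, and then use monotonicity of $\gamma_{z_n}$ on $[t_{*},1]$, while the paper uses the cleaner observation that $t=1$ is the global maximum of $\gamma_{z_n}$ on $(0,\infty)$ (hence $J(z_n)\geq J(sz_n)$ for \emph{every} $s>0$), which sidesteps the need to locate $t_{*}$ relative to $1$. Your extra step is harmless but not necessary: once $J(z_n)\geq J(sz_n)$ for all $s$ is in hand, the contradiction follows directly.
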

	\begin{proof}	
		Notice that, using Lemma \ref{nehari-}, there exists $\delta_1>0$ such that $J(z)\geq \delta_1$ for any $z\in \N^{-}$. As a consequence
		$$J{^-}:= \ds\inf_{z\in \N^{-}}J(z)\geq \delta_1>0.$$
		At this moment we shall consider a minimizer sequence $(z_n)\subset \N^{-}$, i.e,
		$\ds\lim_{n\to\infty}J(z_n)=J^{-}$.
		Since $J$ is coercive in $\N$ and so on $\N^{-}$, using Lemma \ref{c1}, we can show that $(z_n)$ is a bounded sequence in $W$.
		Up to a subsequence we assume that $z_n\rightharpoonup z$ in $W$. Using the same ideas discussed in the proof of Theorem \ref{teorema1} we obtain
		\begin{equation}\label{lim1}
		\ds\lim_{n\to\infty}\Int Q(z_n)=\Int Q(z)\ \ \  \mbox{and} \ \ \ \ds\lim_{n\to\infty}\Int B(z_n)=\Int B(z).
		\end{equation}
		Furthermore, using \eqref{eq2} and $(\phi_3)$, it follows also that
		$$\begin{array}{rcl}
		J(z_n)&=&\Int A(z_n)-\frac{1}{q} B(z_n)+\left(\frac{1}{q}-\frac{1}{\alpha+\beta}\right) Q(z_n)\\[4ex]
		&\leq&
		\left(1-\Fr{\min\{\ell_i\}}{q}\right)\Int A(z_n)+\left(\Fr{1}{q}-\Fr{1}{\alpha+\beta}\right)\Int Q(z_n).
		\end{array}$$
		From the last estimate, using the fact $\left(1-\Fr{\min\{\ell_i\}}{q}\right)<0$, $J^{-}>0$ and \eqref{lim1} we deduce that $\Int Q(z)> 0$.  As a product the fibering map $\gamma_{z}$ admits an unique critical point $t_{1} > 0$ in such way that $t_1 z\in\N^{-}$.
		
		At this stage, arguing by contradiction, we assume that $z_n\not\rightarrow z$ in $W$. Since $(z_n)\subset \N^{-}$ we also mention that $$J(z_n)\geq J(sz_n),\ \ \forall s>0.$$ Therefore, using the last inequality and \eqref{liminf1}, we have been proved that
		$$\begin{array}{rcl}
		J(t_1z)&<&\ds\liminf_{n\rightarrow \infty}\Int A(t_1z_n)-\lim_{n\rightarrow \infty} \left[\Int \Fr{1}{q}K(t_1z_n)+\Fr{1}{\alpha+\beta}Q(t_1z_n)\right]\\[2ex]
		&\leq&\ds\liminf_{n\rightarrow \infty}\left[\Int A(t_1z_n) - \Fr{1}{q}K(t_1z_n)+\Fr{1}{\alpha+\beta}Q(t_1z_n)\right]\\[3ex]
		&=&\ds\liminf_{n\rightarrow \infty} J(t_1z_n)\leq \lim_{n\rightarrow \infty} J(z_n)=J^{-}.
		\end{array}$$
		This is a contradiction due the fact that $(z_n)$ is minimizer sequence. To sum up, we have been showed that $z_n \to z$ in $W$.
	\end{proof}
	
	\subsection{The second weak solution for the nonsingular case:}
	Here we stress out that $q>1$. Using
	standard arguments based on the Nehari method we consider one more time a minimization argument.
	Arguing as was made above, taking a subsequence if necessary, there exists $z\in W$ such that
	$z_n\rightharpoonup z \,\, \mbox{in } \,\, W. $
	It follows from the Proposition \ref{strong-converg-} that $z_n\rightarrow z ~\mbox{in}~W.$
	Furthermore, the last assertion says also that
	$$J(z)=\lim_{n\rightarrow \infty} J(z_n)=\ds\inf_{z\in\N^-} J(z)=J^-.$$
	Hence, applying Lemma \ref{criticalpoint}, we have that $z$ is a weak solution to the quasilinear elliptic System \eqref{eq1}. Since $J(z)=J(|z|)$ and $|z|=(|u|,|v|) \in \N^-$, we assume that $z$ is a nonnegative solution to the elliptic System \eqref{eq1}. It follows also from Lemma
	\ref{no-semitriv-} that $u,v\neq0$. This finishes the proof.

	\subsection{The second weak solution for the singular case:}
	One more time we consider the case singular case given by $0 < q < 1$. The main difficulty arises from the fact that
	the energy function $J$ is not in $C^{1}$ class. At this stage we assume that hypothesis $(C)$ holds true. The proof here is based on the same arguments employed in the proof of Theorem \ref{teorema2}. This ends the proof. \hfill\cqd

	\subsection{The proof of Theorem \ref{teorema1} and Theorem \ref{teorema2} completed:}
	In view of the previous sections there exist $\overline z\in \N^{+}$ and $\tilde z \in \N^{-}$ in such way that $$J(\overline{z})=\ds\inf_{w\in\N^{+}}J(w)\ \ \ \mbox{and}\ \ \  J(\tilde z)=\ds\inf_{ w\in\N^{-}}J(w).$$
	Here we mention that $\overline{z}$ and $\tilde{z}$ are critical points for $J$ which remains true for the nonsingular case or singular case. Furthermore, using the fact that $0 < \lambda+\mu < \lambda_{*} := \min(\eta_1, \eta_{2})$, we stress out that $\N^{+}\cap \N^{-}=\emptyset$. Therefore, $\overline z$ is a nonnegative ground state solution for the quasilinear elliptic System \eqref{eq1}.  As was mentioned before, using the fact that
	$$ J(w)=J(|w|)\,\, \mbox{and}\,\, J^{\prime}(w)=J^{\prime}(|w|)$$
	holds true for any $w \in \w$ we can assume $\overline z, \tilde z\geq 0$ in $\Omega$.
	It is worthwhile to mention that under hypothesis $(C)$ whenever $0< q < 1$ holds, we can use the same ideas discussed in previous sections we find at least two nonnegative weak solutions to the elliptic System \eqref{eq1}. This ends the proof.
	\hfill\cqd
	\section{Appendix: Orlicz-Sobolev spaces}
	
	The reader is  referred to  \cite{A,Rao1} regarding Orlicz and Orlicz-Sobolev spaces.  The usual norm on $L_{\Phi}(\Omega)$ is the Luxemburg norm given by
	$$
	\|u\|_\Phi=\inf\left\{\lambda>0~|~\int_\Omega \Phi\left(\frac{u(x)}{\lambda}\right)  \leq 1\right\}.
	$$
	The Orlicz-Sobolev norm of $ W^{1, \Phi}(\Omega)$ is defined by
	\[
	\displaystyle \|u\|_{1,\Phi}=\|u\|_\Phi+\sum_{i=1}^N\left\|\frac{\partial u}{\partial x_i}\right\|_\Phi.
	\]
	Recall that
	$$
	\widetilde{\Phi}(t) = \displaystyle \max_{s \geq 0} \{ts - \Phi(s) \},~ t \geq 0.
	$$
	It turns out that  $\Phi$ and $\widetilde{\Phi}$  are  N-functions  satisfying  the $\Delta_2$-condition, see \cite{Rao1}.
	Furthermore,  we mention that  $L_{\Phi}(\Omega)$  and $W^{1,\Phi}(\Omega)$  are separable, reflexive,  Banach spaces.

	Using the Poincar\'e inequality for the $\Phi$-Laplacian operator  it follows that
	\[
	\|u\|_\Phi\leq C \|\nabla u\|_\Phi~\mbox{for any}~ u \in W_{0}^{1,\Phi}(\Omega)
	\]
	holds true for some $C > 0$, see Gossez \cite{Gz1,gossez-Czech}.
	As a consequence,  $\|u\| :=\|\nabla u\|_\Phi$ defines a norm in $W_{0}^{1,\Phi}(\Omega)$ which is equivalent to $\|.\|_{1,\Phi}$. Let $\Phi_*$ be the inverse of the function
	$$
	t\in(0,\infty)\mapsto\int_0^t\frac{\Phi^{-1}(s)}{s^{\frac{N+1}{N}}}ds
	$$
	which extends to ${\mathbb{R}}$ by  $\Phi_*(t)=\Phi_*(-t)$ for  $t\leq 0$. We say that a $N$-function $\Psi$ grow essentially more slowly than $\Phi_*$, we write $\Psi<<\Phi_*$ if
	$$
	\lim_{t\rightarrow \infty}\frac{\Psi(\lambda t)}{\Phi_*(t)}=0,~~\mbox{for all}~~\lambda >0.
	$$
	
	The embedding below (cf. \cite{A, DT}) is used in the present work.
	$$
	\displaystyle W_{0}^{1,\Phi}(\Omega) \stackrel{\tiny cpt}\hookrightarrow L_\Psi(\Omega),~~\mbox{if}~~\Psi<<\Phi_*.
	$$
	In particular, as $\Phi<<\Phi_*$ (cf. \cite[Lemma 4.14]{Gz1}),
	$$
	W_{0}^{1,\Phi}(\Omega) \stackrel{\tiny{cpt}} \hookrightarrow L_\Phi(\Omega).
	$$
	Furthermore, we also mention that
	$$
	W_0^{1,\Phi}(\Omega) \stackrel{\mbox{\tiny cont}}{\hookrightarrow} L_{\Phi_*}(\Omega).
	$$
	
	For the next result we consider some estimate relate to $\Phi$ and $\Phi_{*}$ which are useful in the present work.
	
	\begin{rmk}\label{rmk-psi}
		The function $\psi(t) = t^{r-1}, r \in [1, \ell^*)$ satisfies $\Psi<<\Phi_*$ where $\Psi(t) = \int_{0}^{t} \psi(s) ds, t \in \mathbb{R}$. In other words,  the function $\Psi$ grow essentially more slowly than $\Phi_*$. In fact, we easily see that
		$$\lim_{t\rightarrow\infty}\frac{\Psi(\lambda t)}{\Phi_*(t)}\leq \frac{\lambda^{r}}{r\Phi_*(1)}\lim_{t\rightarrow\infty}\frac{1}{t^{\ell^*-r}}=0,~~\mbox{for all}~~\lambda>0.$$
		In that case $W_{0}^{1,\Phi}(\Omega) \stackrel{cpt}\hookrightarrow L_\Psi(\Omega)$.
	\end{rmk}
	
	\begin{rmk}\label{conseqphi3}
		Under assumption $(\phi_{3})$ we observe that
		\begin{equation*}
		\ell-2\leq\Fr{\phi^\prime(t)t}{\phi(t)}\leq m-2,\ \
		\ell\leq\Fr{\phi(t)t^2}{\Phi(t)}\leq m, t > 0.
		\end{equation*}
		Moreover, we have that
		$$\left\{\begin{array}{rcl}
		t^2\phi^{\prime\prime}(t)&\leq& (m-4)t\phi^\prime(t)+(m-2)\phi(t)\\
		t^2\phi^{\prime\prime}(t)&\geq& (\ell-4)t\phi^\prime(t)+(\ell-2)\phi(t), t \geq 0.
		\end{array}\right.$$
	\end{rmk}
	
	Now we refer the reader to \cite{Fuk_1,fang} for the some results elementary results on Orlicz and Orlicz-Sobolev spaces.
	\vskip.2cm
	
	\begin{prop}\label{lema_naru}
		Assume that  $\phi$ satisfies  $(\phi_1)-(\phi_3)$.
		Set
		$$
		\zeta_0(t)=\min\{t^\ell,t^m\},~~~ \zeta_1(t)=\max\{t^\ell,t^m\},~~ t\geq 0.
		$$
		\nd Then  $\Phi$ satisfies
		$$
		\zeta_0(t)\Phi(\rho)\leq\Phi(\rho t)\leq \zeta_1(t)\Phi(\rho),~~ \rho, t> 0,
		$$
		$$
		\zeta_0(\|u\|_{\Phi})\leq\int_\Omega\Phi(u)\leq \zeta_1(\|u\|_{\Phi}),~ u\in L_{\Phi}(\Omega).
		$$
	\end{prop}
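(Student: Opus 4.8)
The plan is to derive both chains of inequalities from the fundamental bound $\ell\le\phi(s)s^{2}/\Phi(s)\le m$, $s>0$, recorded in \eqref{22}, and then to transport the pointwise estimate to the modular by means of the Luxemburg norm together with the $\Delta_{2}$-property of $\Phi$.

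First I would establish the homogeneity-type inequality $\zeta_0(t)\Phi(\rho)\le\Phi(\rho t)\le\zeta_1(t)\Phi(\rho)$. Fix $\rho>0$ and consider $h(\tau):=\ln\Phi(\rho e^{\tau})$ for $\tau\in\mathbb{R}$, which is well defined and $C^{1}$ because $\Phi'(s)=s\phi(s)>0$ on $(0,\infty)$; a direct computation gives
\[
h'(\tau)=\frac{\rho e^{\tau}\,\Phi'(\rho e^{\tau})}{\Phi(\rho e^{\tau})}=\frac{\phi(\rho e^{\tau})(\rho e^{\tau})^{2}}{\Phi(\rho e^{\tau})}\in[\ell,m]
\]
by \eqref{22}. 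Setting $t=e^{\tau}$ and integrating this bound over the interval between $0$ and $\ln t$ yields, for $t\ge1$, $t^{\ell}\le\Phi(\rho t)/\Phi(\rho)\le t^{m}$, and for $0<t<1$, $t^{m}\le\Phi(\rho t)/\Phi(\rho)\le t^{\ell}$ (the inequalities flip because $\ln t<0$). In either case the smaller of the two powers equals $\min\{t^{\ell},t^{m}\}=\zeta_0(t)$ and the larger equals $\max\{t^{\ell},t^{m}\}=\zeta_1(t)$, so the claimed double inequality follows, the case $t=1$ being trivial.

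Next I would deduce the modular estimate. Let $u\in L_{\Phi}(\Omega)$; discarding the trivial case $u=0$, set $\lambda:=\|u\|_{\Phi}\in(0,\infty)$. Applying the pointwise inequality just proved, at a.e.\ $x\in\Omega$, with $\rho=u(x)/\lambda$ and $t=\lambda$, so that $\rho t=u(x)$ (and using $\Phi(0)=0$ on the set $\{u=0\}$), I obtain
\[
\zeta_0(\lambda)\,\Phi\!\left(\frac{u(x)}{\lambda}\right)\le\Phi(u(x))\le\zeta_1(\lambda)\,\Phi\!\left(\frac{u(x)}{\lambda}\right).
\]
Integrating over $\Omega$ and invoking the standard fact that, since $\Phi$ satisfies the $\Delta_{2}$-condition, the Luxemburg norm realizes $\int_{\Omega}\Phi(u/\|u\|_{\Phi})=1$, I conclude $\zeta_0(\|u\|_{\Phi})\le\int_{\Omega}\Phi(u)\le\zeta_1(\|u\|_{\Phi})$.

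The argument is essentially bookkeeping; the only delicate points are keeping track of which power realizes the minimum and which the maximum according to whether $t\ge1$ or $t<1$, and the use of the $\Delta_{2}$-condition to pass from the defining infimum of the Luxemburg norm to the identity $\int_{\Omega}\Phi(u/\|u\|_{\Phi})=1$, which is precisely what converts the pointwise inequality into the stated modular bounds. No genuinely hard step is expected.
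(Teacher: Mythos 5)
Your argument is correct. A small point worth noting first: the paper does not prove this proposition at all — it is stated in the appendix with a citation to Fukagai–Narukawa \cite{Fuk_1} and Tan–Fang \cite{fang}, so there is no in-paper proof to compare against. Your argument is, however, the standard one found in those references, and it is sound.

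The first chain follows cleanly from the logarithmic-derivative estimate $h'(\tau)=\phi(\rho e^{\tau})(\rho e^{\tau})^{2}/\Phi(\rho e^{\tau})\in[\ell,m]$ and integration on $[0,\ln t]$ (with the sign flip for $0<t<1$), using that $\Phi$ is positive and $C^{1}$ on $(0,\infty)$. For the modular bounds you correctly specialize the pointwise inequality with $\rho=u(x)/\lambda$, $t=\lambda=\|u\|_{\Phi}$, and then use that the $\Delta_{2}$-condition gives $\int_{\Omega}\Phi(u/\|u\|_{\Phi})=1$ for $u\neq0$ (the modular $\lambda\mapsto\int_{\Omega}\Phi(u/\lambda)$ is finite and continuous under $\Delta_{2}$, hence equals $1$ at the infimum). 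Two cosmetic remarks: (i) since $\rho$ may be negative, you should say explicitly that $\Phi$ is even (it is, by its definition $\Phi(t)=\int_0^{|t|}s\phi(s)\,ds$), so the pointwise inequality applies with $|\rho|$ in place of $\rho$; (ii) the upper modular bound in fact needs no $\Delta_2$ at all, since $\int_\Omega\Phi(u/\|u\|_\Phi)\le1$ is immediate from the definition of the Luxemburg norm — $\Delta_{2}$ is only needed for the lower bound. Neither remark affects correctness.
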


	\begin{prop}\label{fang}
		Assume that $(\phi_1)-(\phi_3)$ holds.
		Define the function
		$$
		\eta_0(t)=\min\{t^{\ell-2},t^{m -2}\},~~~ \eta_1(t)=\max\{t^{\ell -2},t^{m - 2}\},~~ t\geq 0.
		$$
		\nd Then the function $\phi$ verifies
		$$
		\eta_0(t)\phi(\rho) \leq \phi(\rho t)\leq \eta_1(t)\phi(\rho),~~ \rho, t> 0,
		$$
	\end{prop}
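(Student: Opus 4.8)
The plan is to reduce both inequalities to the pointwise bound on the logarithmic derivative of $\phi$ supplied by $(\phi_3)$ (equivalently, the first inequality in \eqref{22}, see also Remark \ref{conseqphi3}),
\[
\ell-2 \;\leq\; \frac{\phi'(s)s}{\phi(s)} \;\leq\; m-2, \qquad s>0,
\]
and then to exploit the monotonicity of two auxiliary ratios. Fixing $\rho>0$, I would introduce on $(0,\infty)$ the functions $\psi(t):=\phi(\rho t)/t^{\ell-2}$ and $\chi(t):=\phi(\rho t)/t^{m-2}$. A one-line differentiation gives $\psi'(t)=t^{-(\ell-1)}\big(\rho t\,\phi'(\rho t)-(\ell-2)\phi(\rho t)\big)$ and $\chi'(t)=t^{-(m-1)}\big(\rho t\,\phi'(\rho t)-(m-2)\phi(\rho t)\big)$; invoking the displayed bound at the point $\rho t$ shows that the bracket in $\psi'$ is $\geq 0$ and the bracket in $\chi'$ is $\leq 0$, so $\psi$ is nondecreasing and $\chi$ is nonincreasing on $(0,\infty)$.

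The next step is a case split at $t=1$. For $t\geq 1$, monotonicity yields $\psi(t)\geq\psi(1)=\phi(\rho)$ and $\chi(t)\leq\chi(1)=\phi(\rho)$, i.e. $t^{\ell-2}\phi(\rho)\leq\phi(\rho t)\leq t^{m-2}\phi(\rho)$; since $\ell\leq m$ and $t\geq 1$, this is precisely $\eta_0(t)\phi(\rho)\leq\phi(\rho t)\leq\eta_1(t)\phi(\rho)$. For $0<t\leq 1$, the same monotonicity gives $\psi(t)\leq\phi(\rho)$ and $\chi(t)\geq\phi(\rho)$, that is $t^{m-2}\phi(\rho)\leq\phi(\rho t)\leq t^{\ell-2}\phi(\rho)$; since now $t\leq 1$ forces $t^{m-2}\leq t^{\ell-2}$, this again reads $\eta_0(t)\phi(\rho)\leq\phi(\rho t)\leq\eta_1(t)\phi(\rho)$. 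The value $t=1$ is trivial, so collecting the two regimes finishes the proof for every $\rho,t>0$.

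I do not expect a genuine obstacle here; the only point requiring care is the bookkeeping of exponents, namely keeping track of which of $t^{\ell-2}$ and $t^{m-2}$ is the smaller and which the larger on each side of $t=1$, so that the two-sided estimate lines up correctly with $\eta_0$ and $\eta_1$. An equivalent route, which I would mention as an alternative, is to set $h(\sigma)=\log\phi(e^{\sigma})$, observe that $\ell-2\leq h'(\sigma)\leq m-2$ by $(\phi_3)$, apply the mean value theorem on the interval with endpoints $\log\rho$ and $\log(\rho t)$, and exponentiate; this trades the explicit differentiation for the same sign discussion according to the sign of $\log t$.
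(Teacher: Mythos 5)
Your proof is correct, and the two routes you sketch (monotonicity of $t\mapsto\phi(\rho t)/t^{\ell-2}$ and $t\mapsto\phi(\rho t)/t^{m-2}$, or equivalently the logarithmic mean-value argument for $h(\sigma)=\log\phi(e^\sigma)$) are both sound derivations from the bound $\ell-2\leq \phi'(s)s/\phi(s)\leq m-2$. One small point of hygiene: this bound is a consequence of $(\phi_3)$ rather than $(\phi_3)$ itself, as it is recorded in \eqref{22} and Remark \ref{conseqphi3}; you implicitly rely on that derivation, which is fine, but it is worth noting it is an input, not the raw hypothesis. The paper itself supplies no proof of Proposition \ref{fang}; it is stated as a known auxiliary estimate with a pointer to \cite{Fuk_1,fang}, where precisely this kind of ratio-monotonicity argument appears, so your write-up matches the standard treatment and fills the gap the paper leaves to the cited references.
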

	
	Using the last results we obtain some estimates in order to apply in Orlicz and Orlicz-Sobolev framework for the quasilinear elliptic Systems \eqref{eq1}.
	\begin{prop}\label{estimativanorma}
		Assume that $(\phi_1)-(\phi_3)$ Holds. Then there exist positives constants  $A_i(\ell_i,m_i):=A_i$ in such way that		$$\begin{array}{rcl}A_1\min\left\{||(u,v)||^{\displaystyle\min\{\ell_i\}},||(u,v)||^{\displaystyle\max\{m_i\}}\right\}&\leq& \Int \Phi_1(|\nabla u|)+\Phi_2(|\nabla v|)\\
		&\leq& A_2\max\left\{||(u,v)||^{\displaystyle\min\{\ell_i\}},||(u,v)||^{\displaystyle\max\{m_i\}}\right\},  (u, v) \in W.
		\end{array}$$
	\end{prop}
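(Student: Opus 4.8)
The plan is to reduce the vector estimate to the scalar ones provided by Proposition \ref{lema_naru}, applied separately to $\Phi_1$ with exponents $\ell_1,m_1$ and to $\Phi_2$ with exponents $\ell_2,m_2$, and then to combine the two scalar bounds using the fact that on $W$ the norm is additive, $\|(u,v)\| = \|u\| + \|v\|$ with $\|u\| = \|\nabla u\|_{\Phi_1}$ and $\|v\| = \|\nabla v\|_{\Phi_2}$. First I would invoke Proposition \ref{lema_naru} to get, for $i=1,2$,
\begin{equation*}
\zeta_0^{(i)}(\|u\|)\,\le\, \int_\Omega \Phi_i(|\nabla u|)\,\le\, \zeta_1^{(i)}(\|u\|),
\end{equation*}
where $\zeta_0^{(i)}(t)=\min\{t^{\ell_i},t^{m_i}\}$ and $\zeta_1^{(i)}(t)=\max\{t^{\ell_i},t^{m_i}\}$ (here using that $\|\nabla u\|_{\Phi_i}$ is the norm on $W_0^{1,\Phi_i}(\Omega)$, equivalent to $\|\cdot\|_{1,\Phi_i}$, as recalled in the appendix). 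Adding the $i=1$ inequality evaluated at $u$ to the $i=2$ inequality evaluated at $v$ yields
\begin{equation*}
\min\{\|u\|^{\ell_1},\|u\|^{m_1}\}+\min\{\|v\|^{\ell_2},\|v\|^{m_2}\}
\,\le\,\int_\Omega \Phi_1(|\nabla u|)+\Phi_2(|\nabla v|)
\,\le\,\max\{\|u\|^{\ell_1},\|u\|^{m_1}\}+\max\{\|v\|^{\ell_2},\|v\|^{m_2}\}.
\end{equation*}

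The remaining task is purely a matter of comparing these sums of individual powers of $\|u\|$ and $\|v\|$ with the powers $\|(u,v)\|^{\min\{\ell_i\}}$ and $\|(u,v)\|^{\max\{m_i\}}$ of the combined norm $s:=\|u\|+\|v\|$. For the lower bound, I would use the elementary inequality that for exponents $p\in\{\ell_1,\ell_2,m_1,m_2\}$ one has, with $r:=\min\{\ell_i\}$ and $R:=\max\{m_i\}$, a constant $c=c(\ell_i,m_i)>0$ such that $\min\{t^{\ell_i},t^{m_i}\}\ge c\,\min\{t^{r},t^{R}\}$ for all $t\ge0$; summing over the two coordinates and using $s\le 2\max\{\|u\|,\|v\|\}$ together with the superadditivity/subadditivity of $t\mapsto t^p$ on the relevant ranges (splitting into the cases $s\le1$ and $s\ge1$) produces the constant $A_1$. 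For the upper bound one argues symmetrically: $\max\{t^{\ell_i},t^{m_i}\}\le \max\{t^{r},t^{R}\}$, and $\|u\|^{p}+\|v\|^{p}\le 2\,s^{p}\le 2\max\{s^{r},s^{R}\}$ for each $p\in[r,R]$, which gives $A_2$.

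The only mildly delicate point — and the one I would write out carefully — is the case analysis $s\lessgtr 1$ needed to pass between $\min/\max$ of powers of a sum and the sum of $\min/\max$ of powers of the summands, since the function $t\mapsto t^p$ is superadditive for $p\ge1$ but the exponents $r,R$ may straddle the point where $s=1$ changes which of $t^{r},t^{R}$ is the smaller. This is routine but is where all the constants $A_1,A_2$ are actually pinned down; everything else is a direct citation of Proposition \ref{lema_naru} and the norm identity on $W$. I do not anticipate any genuine obstacle: the statement is a bookkeeping consequence of the scalar $\Delta_2$-type estimates already established in the appendix.
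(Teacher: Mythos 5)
Your argument is correct, and it is exactly the route the paper intends: the authors state Proposition \ref{estimativanorma} in the appendix without a written proof, explicitly flagging it as a consequence of Proposition \ref{lema_naru} (``Using the last results we obtain some estimates\ldots''), which is precisely the reduction you perform. Applying Proposition \ref{lema_naru} coordinatewise to $\nabla u$ and $\nabla v$, then using the elementary comparisons $\min\{t^{\ell_i},t^{m_i}\}\ge\min\{t^{r},t^{R}\}$ and $\max\{t^{\ell_i},t^{m_i}\}\le\max\{t^{r},t^{R}\}$ together with a case split at $\|(u,v)\|=1$, does pin down the constants $A_1,A_2$ (one may take, e.g., $A_2=2$ and $A_1=2^{-r-R}$), so there is no gap.
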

	In addition, the critical function $\Phi_{*}$ we need to consider some other estimates as follows
	\begin{prop}\label{lema_naru_*}
		Assume that  $\phi$ satisfies $(\phi_1)-(\phi_3)$.  Set
		$$
		\zeta_2(t)=\min\{t^{\ell^*},t^{m^*}\},~~ \zeta_3(t)=\max\{t^{\ell^*},t^{m^*}\},~~  t\geq 0
		$$
		\nd where $1<\ell,m<N$ and $m^* = \frac{mN}{N-m}$, $\ell^* = \frac{\ell N}{N-\ell}$.  Then
		$$
		\ell^*\leq\frac{t^2\Phi^\prime_*(t)}{\Phi_*(t)}\leq m^*;
		~~~~~~~~~
		\zeta_2(t)\Phi_*(\rho)\leq\Phi_*(\rho t)\leq \zeta_3(t)\Phi_*(\rho),~~ \rho, t> 0,
		$$
		and
		$$
		\zeta_2(\|u\|_{\Phi_{*}})\leq\int_\Omega\Phi_{*}(u)\leq \zeta_3(\|u\|_{\Phi_*}),~ u\in L_{\Phi_*}(\Omega).
		$$
	\end{prop}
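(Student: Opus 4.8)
The plan is to derive Proposition~\ref{lema_naru_*} from Proposition~\ref{lema_naru} by transporting the submultiplicative estimates for $\Phi$ first through the inverse N‑function $\Phi^{-1}$ and then through the Sobolev transform $\Phi\mapsto\Phi_*$, in the spirit of Fukagai--Ito--Narukawa and Gossez (see \cite{Fuk_1,Gz1}); the three displayed assertions are then tied together exactly as the scalar estimates in \eqref{22}. \textbf{Step 1 (reading Proposition~\ref{lema_naru} on $\Phi^{-1}$).} Starting from $\min\{t^{\ell},t^{m}\}\Phi(\rho)\le\Phi(\rho t)\le\max\{t^{\ell},t^{m}\}\Phi(\rho)$ for all $\rho,t>0$, I would apply $\Phi^{-1}$ and rescale to get
\[ t^{1/m}\,\Phi^{-1}(s)\le\Phi^{-1}(ts)\le t^{1/\ell}\,\Phi^{-1}(s),\qquad t\ge 1,\ s>0, \]
with the chain of inequalities reversed when $0<t\le 1$; this merely records that $\Phi^{-1}$ has dilation exponents $1/m$ and $1/\ell$.

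\textbf{Step 2 (from $\Phi^{-1}$ to $H$ and to $\Phi_*$).} Recall that $\Phi_*=H^{-1}$ with $H(t)=\int_0^t \Phi^{-1}(s)\,s^{-(N+1)/N}\,ds$. The substitution $s=\lambda r$ gives $H(\lambda t)=\lambda^{-1/N}\int_0^t \Phi^{-1}(\lambda r)\,r^{-(N+1)/N}\,dr$, and inserting Step~1 together with the identities $\tfrac1m-\tfrac1N=\tfrac1{m^{*}}$ and $\tfrac1\ell-\tfrac1N=\tfrac1{\ell^{*}}$ yields, for $\lambda\ge1$,
\[ \lambda^{1/m^{*}}\,H(t)\le H(\lambda t)\le\lambda^{1/\ell^{*}}\,H(t), \]
and the reversed chain for $0<\lambda\le1$. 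Inverting these dilation bounds (applying $H^{-1}=\Phi_*$ and rescaling exactly as in Step~1, now with $\ell^{*},m^{*}$ in place of $\ell,m$) produces the middle assertion of the statement, namely $\zeta_2(t)\,\Phi_*(\rho)\le\Phi_*(\rho t)\le\zeta_3(t)\,\Phi_*(\rho)$ for all $\rho,t>0$.

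\textbf{Step 3 (index bound and modular estimate).} Dividing $\zeta_2(t)\le\Phi_*(\rho t)/\Phi_*(\rho)\le\zeta_3(t)$ by $t-1$ and letting $t\to1$ — from the right for the upper bound and from the left for the lower one — gives the first assertion, i.e.\ that the dilation indices of $\Phi_*$ lie in $[\ell^{*},m^{*}]$; in particular $m^{*}<\infty$, so $\Phi_*$ satisfies the $\Delta_2$-condition. For the last assertion, given $u\in L_{\Phi_*}(\Omega)$ with $u\not\equiv0$, set $\lambda=\|u\|_{\Phi_*}$; since $\Phi_*\in\Delta_2$ one has $\int_\Omega\Phi_*(u/\lambda)=1$, so applying the middle assertion pointwise with $t=\lambda$ to $u/\lambda$ and integrating over $\Omega$ gives $\zeta_2(\lambda)\le\int_\Omega\Phi_*(u)\le\zeta_3(\lambda)$, as required.

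\textbf{Main obstacle.} The delicate point is Step~2: one must push the dilation exponents of $\Phi$ through the inversion and then through the integral transform without swapping the regimes $\lambda\ge1$ and $\lambda\le1$ at any passage to an inverse, and one must keep track of the arithmetic $\tfrac1\ell-\tfrac1N=\tfrac1{\ell^{*}}$, $\tfrac1m-\tfrac1N=\tfrac1{m^{*}}$ that is responsible for turning $(\ell,m)$ into $(\ell^{*},m^{*})$. Everything else is either elementary or a direct transcription of the argument already used for Proposition~\ref{lema_naru}.
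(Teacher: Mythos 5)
The paper states Proposition~\ref{lema_naru_*} in the appendix without proof, referring to \cite{Fuk_1,fang} for such background estimates, so there is no internal argument to compare against. Your proof is correct and is essentially the standard derivation found in those references: transport the dilation bounds for $\Phi$ from Proposition~\ref{lema_naru} to $\Phi^{-1}$, push them through the integral $\Phi_*^{-1}(t)=\int_0^t\Phi^{-1}(s)\,s^{-(N+1)/N}\,ds$ (where the arithmetic $\tfrac1\ell-\tfrac1N=\tfrac1{\ell^*}$, $\tfrac1m-\tfrac1N=\tfrac1{m^*}$ converts $(\ell,m)$ into $(\ell^*,m^*)$), and invert again; the split between $t\ge1$ and $0<t\le1$ is handled consistently throughout, and $\ell\le m$ does imply $\ell^*\le m^*$ so the $\min/\max$ in $\zeta_2,\zeta_3$ come out in the right order. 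Two small remarks. First, the paper's displayed $\ell^*\le t^2\Phi_*'(t)/\Phi_*(t)\le m^*$ contains a typo: comparing with \eqref{22} (where $\ell\le\phi(t)t^2/\Phi(t)\le m$ with $\Phi'(t)=t\phi(t)$) the intended quantity is $t\Phi_*'(t)/\Phi_*(t)$, equivalently $t^2\phi_*(t)/\Phi_*(t)$ with $\Phi_*'(t)=t\phi_*(t)$; your Step~3 proves the corrected statement. Second, in that step the one-sided limit $t\to1^+$ alone yields both bounds at once, so the "from the right / from the left" wording is unnecessary but harmless, and your use of $\Delta_2$ (which follows from the finite upper index $m^*$ just established) to justify $\int_\Omega\Phi_*(u/\|u\|_{\Phi_*})=1$ for $u\not\equiv0$ is exactly the right closing move.
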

	\textbf{Acknowledgments}: The authors were partially supported by Fapeg/CNpq grants 03/2015-PPP.

\end{document}